\newtheorem{remark}{Remark}
\newtheorem{theorem}{Theorem}
\newtheorem{proposition}{Proposition}
\newtheorem{corollary}{Corollary}
\newtheorem{lemma}{Lemma}
\newtheorem{assumption}{Assumption}
\def\balpha{\boldsymbol{\alpha}}
\def\bmu{\boldsymbol{\mu}}
\def\bnu{\boldsymbol{\nu}}
\def\bmubar{\boldsymbol{\bar{\mu}}}
\def\bnubar{\boldsymbol{\bar{\nu}}}
\newcommand\cL{\mathcal L}
\newcommand\cP{\mathcal P}
\newcommand\cS{\mathcal S}
\newcommand\EE{\mathbb E}
\title{Price of Anarchy for Mean Field Games}
\author{
Ren\'{e} Carmona\thanks{Operations Research and Financial Engineering, Princeton University, Partially supported by NSF \#DMS-1716673 and ARO \#W911NF-17-1-0578} 
\and 
Christy V. Graves\thanks{Program in Applied and Computational Mathematics, Princeton University, Partially supported by NSF \#DMS-1515753 and NSF GRFP}
\and
Zongjun Tan \thanks{Operations Research and Financial Engineering, Princeton University, Partially supported by NSF \#DMS-1515753} 
}
\begin{document}

\maketitle

\begin{abstract}
    The price of anarchy, originally introduced to quantify the inefficiency of selfish behavior in routing games, is extended to mean field games. The price of anarchy is defined as the ratio of a worst case social cost computed for a mean field game equilibrium to the optimal social cost as computed by a central planner. We illustrate properties of such a price of anarchy on linear quadratic extended mean field games, for which explicit computations are possible. A sufficient and necessary condition to have no price of anarchy is presented. Various asymptotic behaviors of the price of anarchy are proved for limiting behaviors of the coefficients in the model and numerics are presented.
\end{abstract}

\section{\textbf{Introduction}} \label{se:introduction}
The concept of the `price of anarchy' was introduced to quantify the inefficiency of selfish behavior in finite player games \cite{christodoulou2005price2}\cite{christodoulou2005price}\cite{koutsoupias1999worst}\cite{Roughgarden}\cite{roughgarden2002bad}\cite{zhu2010price}. In this report, we extend the notion of price of anarchy to mean field games (MFG). Mean field games were introduced by Lasry and Lions \cite{Lasry_Lions} and Caines and his collaborators \cite{Huang} to describe the limiting regime of large symmetric games when the number of players, $N$, tends to infinity. A mean field game equilibrium characterizes the analogue of a Nash equilibrium in the $N=\infty$ regime. Thus, as in the finite player case, it is possible that the mean field game equilibrium is inefficient. In fact, in the paper of Balandat and Tomlin \cite{balandat2013efficiency}, they present a numerical example that shows that mean field game equilibria are not efficient, in general. The suboptimality of a mean field game equilibrium is also illustrated numerically for a congestion model in a paper of Achdou and Lauri\`{e}re \cite{achdou2015system}. More recently Cardaliaguet and Rainer gave in \cite{pierre_poa} a partial differential equation based thorough analysis of the \emph{(in)efficiency} of the mean field game equilibria.

In this report, the goal is to define the price of anarchy in the context of mean field games, and to compute it for a class of linear quadratic mean field game models, which can be solved explicitly. In fact, we consider an even more general class of games by allowing for interaction between the players through their controls, in addition to interaction through their states. This is often referred in the literature as extended mean field game, or mean field game of control. We compare the social cost of a mean field game equilibrium to the cost incurred when the players execute a strategy computed centrally.

We consider a system of $N$ players whose private states are denoted at time $t$ by $X^1_t$, $X^2_t$, $\cdots$, $X^N_t$. To keep the presentations simple, we assume the state space is $\mathbb{R}$. We denote by $\mu^N_t$ the empirical distribution of the states, namely:
\begin{equation*}
\mu^N_t=\frac1N\sum_{i=1}^N\delta_{X^i_t}.
\end{equation*}
We assume that these states evolve in continuous time under the influences of controls  $\alpha^1_t$, $\alpha^2_t$, $\cdots$ , $\alpha^N_t$ $\in \mathbb{A}$, where the set of admissible controls, $\mathbb{A}$, will be defined later. Let $\nu^N_{t}$ denote the empirical measure of the controls:
\begin{equation*}
\nu^N_t=\frac1N\sum_{i=1}^N\delta_{\alpha^i_t}.
\end{equation*}
We also assume that if and when interactions between these states and controls are present, they are of a mean field type, i.e. through $\mu^N_t$ and $\nu^N_t$. The time evolution of the state for player $i$ is given by the It\^{o} dynamics:
\begin{equation*}
    dX^i_t=b(t,X^i_t,\mu^N_t,\alpha^i_t,\nu^N_t)dt+\sigma dW_t.
\end{equation*}
We work over the interval $[0,T]$ limited by a finite time horizon $T \in \mathbb{R}^+$. We assume the drift function $b:[0,T] \times \mathbb{R} \times \cP(\mathbb{R}) \times \mathbb{A} \times \cP(\mathbb{A}) \ni (t,x,\mu,\alpha,\nu)  \rightarrow \mathbb{R}$ is Lipschitz in each of it's inputs. For the sake of simplicity, we assume that the volatility, $\sigma$, is a positive constant.

\subsubsection*{\textbf{Cost Functionals}}
We assume that we are given two functions $f:[0,T] \times \mathbb{R} \times \cP(\mathbb{R}) \times \mathbb{A} \times \cP(\mathbb{A}) \ni (t,x,\mu,\alpha, \nu) \rightarrow \mathbb{R}$ and $g:\mathbb{R}\times \cP(\mathbb{R}) \ni (x,\mu) \rightarrow \mathbb{R}$ which we call running and terminal cost functions, respectively. We assume $f$ and $g$ are Lipschitz in each of their arguments. The goal of player $i$ is to minimize their expected cost as given by:
\begin{equation*}
J^i(\balpha^1,\cdots,\balpha^N)=\EE\bigg[\int_0^Tf(t,X^i_t,\mu^N_{t},\alpha^i_t,\nu^N_{t})\,dt +g(X^i_T,\mu^{N}_{T})\bigg].
\end{equation*}

\subsubsection*{\textbf{Social Cost}}
We restrict ourselves to Markovian control strategies $\balpha=(\alpha_t)_{0\le t\le T}$ given by feedback functions in the form $\alpha_t=\phi(t,X_{t})$ and we let $\mathbb{A}$ denote the set of such controls. If the $N$ players use distributed Markovian control strategies of the form $\alpha^i_t=\phi(t,X^i_t)$, we define the cost (per player) to the system as the quantity $J^{(N)}_\phi$:
\begin{equation*}
J^{(N)}_\phi=\frac1N\sum_{i=1}^NJ^i(\balpha^1,\cdots,\balpha^N).
\end{equation*}
We shall compute this social cost in the limit $N\to\infty$ when all the players use the distributed control strategies given by the same feedback function $\phi$ identified by solving an optimization problem in the limit $N\to\infty$. We take the social cost to be the limit as $N\to\infty$ of $J^{(N)}_\phi$, namely:
\begin{equation*}
\begin{split}
\lim_{N\to\infty}J^{(N)}_\phi
&=\lim_{N\to\infty}\frac1N\sum_{i=1}^NJ^i(\balpha^1,\cdots,\balpha^N)\\
&=\lim_{N\to\infty}\frac1N\sum_{i=1}^N\EE\bigg[\int_0^Tf(t,X^i_t,\mu^{N}_{t},\phi(t,X^i_t),\nu^{N}_{t})\,dt +g(X^i_T,\mu^{N}_{T})
\bigg],\\
&=\lim_{N\to\infty}\EE\bigg[\int_0^T<f(t,\,\cdot\,,\mu^{N}_{X_t},\phi(t,\,\cdot\,),\nu^{N}_{t}),\mu^N_{t}>\,dt +<g(\,\cdot\,,\mu^{N}_{X_T}),\mu^{N}_{T}>
\bigg],
\end{split}
\end{equation*}
if we use the notation $<\varphi,\rho>$ for the integral $\int \varphi(z)\rho(dz)$ of the function $\varphi$ with respect to the measure $\rho$. Now if we assume that in the limit $N\to\infty$ the empirical distributions $\mu^N_{t}$ converge toward a measure $\mu_t$, and thus $\nu^N_{t}=\frac1N\sum_{i=1}^N\delta_{\phi(t,X^i_t)}$ also converges toward a measure $\nu_t$, then the social cost of the feedback function $\phi$ becomes:
\begin{equation*}
SC(\phi)=\int_0^T<f(t,\,\cdot\,,\mu_t,\phi(t,\,\cdot\,),\nu_t),\mu_t>\,dt \;+\; <g(\,\cdot\,,\mu_T),\mu_T>,
\end{equation*}
with the expectation, $\EE$, disappearing when the limiting flows $\bmu=(\mu_t)_{0\le t\le T}$ and $\bnu=(\nu_t)_{0\le t\le T}$ are deterministic.

We would like to evaluate $SC(\phi)$ in the $N=\infty$ regime directly, without having to construct the deterministic measure flows $\bmu$ and $\bnu$ as limits of the finite player empirical measures. To do this, we assume that propagation of chaos holds and that the states of the $N$ players become asymptotically independent in the limit as $N \rightarrow \infty$. We consider a representative agent whose state is given by $\boldsymbol{X^{\phi}}=(X^{\phi}_t)_{0 \leq t \leq T}$, the continuous time solution of the stochastic differential equation of McKean-Vlasov type:
\begin{equation}
    dX^{\phi}_t=b(t,X^{\phi}_t,\cL(X^{\phi}_t),\phi(t,X^{\phi}_t),\cL(\phi(t,X^{\phi}_t))dt+\sigma dW_t
\label{eq:X}
\end{equation}
controlled by $\phi$. Then we can identify $\bmu$ as the law of a representative agent using the feedback function $\phi$, i.e. $\mu_t=\cL(X^{\phi}_t)$, and similarly, we can identify $\bnu$ as the law of the control, such that $\nu_t=\cL(\phi(t,X^{\phi}_t))$. Thus, in the $N=\infty$ regime, we rewrite the social cost as:
\begin{equation*}
SC(\phi)=\int_0^T<f(t,\,\cdot\,,\cL(X^{\phi}_t),\phi(t,\,\cdot\,),\cL(\phi(t,X^{\phi}_t))),\cL(X^{\phi}_t)>\,dt+<g(\,\cdot\,,\cL(X^{\phi}_T)),\cL(X^{\phi}_T)>,
\end{equation*}
where $\boldsymbol{X^{\phi}}$ satisfies equation (\ref{eq:X}). For the remainder of the paper, we work in the $N=\infty$ regime. As mentioned earlier, $\phi$ should be identified by solving an optimal control problem. We consider two distinct problems:
\begin{itemize}\itemsep=-2pt
\item $\phi$ is a feedback function providing a mean field game equilibrium. We detail more precisely what is meant by $\phi$ providing a mean field game equilibrium in section \ref{sub:MFG_formulation}.
\item $\phi$ is the feedback function minimizing the social cost $SC(\phi)$, without having to be a mean field game equilibrium, in which case we use the notation $SC^{MKV}$ for $SC(\phi)$. This is a control problem of McKean-Vlasov type, which is detailed more precisely in section \ref{sub:MKV_formulation}.
\end{itemize}
The two problems are detailed more precisely in sections \ref{sub:MFG_formulation} and \ref{sub:MKV_formulation}. In section \ref{Price_Of_Anarchy}, we define the price of anarchy based on these two problem formulations. The class of linear quadratic models is explored in section \ref{sec:PoA_LQ}, where we provide some theoretical results on the price of anarchy for this class of games. This includes our main result, Theorem \ref{thm:PoA_equivalent_1}, which provides a sufficient and necessary condition to have no price of anarchy. In section \ref{sec:PoA_LQ}, we also prove some limiting cases and show numerical results. We conclude in section \ref{sec:conclusion}.

\subsection{\textbf{Nash Equilibrium: Mean Field Game Formulation}}
\label{sub:MFG_formulation}
The goal of this subsection is to articulate what is meant by a feedback function providing a mean field game equilibrium. To begin, we define what we call the \textit{mean field environment}. By symmetry of the players, we suppose all of the players in the mean field game use the same feedback function, $\phi$. Then the \textit{mean field environment} specified by $\phi$ is characterized by $\cL(X^{\phi}_t)_{0\le t\le T}$ and $\cL(\phi(t,X^{\phi}_t))_{0\le t\le T}$ where the dynamics of $(X^{\phi}_t)_{0\le t\le T}$ are given by equation (\ref{eq:X}). Since we search for a Nash equilibrium, we consider a representative agent who wishes to find their best response, $\phi'$, to the mean field environment specified by $\phi$, in which case their state is given by $\boldsymbol{X^{\phi',\phi}}=(X^{\phi',\phi}_t)_{0\le t\le T}$ solving the standard stochastic differential equation:
\begin{equation*}
    dX^{\phi',\phi}_t=b(t,X^{\phi',\phi}_t,\cL(X^{\phi}_t),\phi'(t,X^{\phi',\phi}_t),\cL(\phi(t,X^{\phi}_t)))dt+\sigma dW_t.
\end{equation*}
Consider the function:
\begin{equation*}
    \cS(\phi',\phi)=\biggl[\int_0^T<f(t,\cdot,\cL(X^{\phi}_t),\phi'(t,\cdot),\cL(\phi(t,X^{\phi}_t)),\cL(X^{\phi',\phi}_t))> dt +<g(\cdot,\cL(X^{\phi}_T)),\cL(X^{\phi',\phi}_t)>\biggr].
\end{equation*}
The best response for the representative agent in the mean field environment specified by $\phi$ is the feedback function minimizing this cost, namely $\phi^*=\arg\inf_{\phi'}\cS(\phi',\phi)$. Assuming the minimizer is unique (which will be the case for the models we consider), this defines a mapping $\Phi:\phi \rightarrow \phi^*$. If there is a $\hat{\phi}$ such that $\Phi(\hat{\phi})=\hat{\phi}$, then the players are in a mean field game equilibrium.

Thus, the search for a feedback function providing a mean field game equilibrium can be summarized as the following set of two successive steps:
\begin{enumerate}
\item For each feedback function $\phi:[0,T] \times \mathbb{R} \ni (t,x) \rightarrow \mathbb{R}$, solve the optimal control problem
\begin{equation*}
    \phi^*=\arg \inf_{\phi'}\cS(\phi',\phi).
\end{equation*}
Define the mapping $\Phi(\phi):=\phi^*$.
\item Find a fixed point $\hat{\phi}$ of $\Phi$ such that $\Phi(\hat{\phi})=\hat{\phi}$.
\end{enumerate}

When these two steps can be taken successfully, we say that $\hat{\phi}$ provides a mean field game equilibrium. Note that $\boldsymbol{X^{\hat{\phi},\hat{\phi}}}=\boldsymbol{X^{\hat{\phi}}}$ and therefore $\cS(\hat{\phi},\hat{\phi})=SC(\hat{\phi})$ gives the social cost for the mean field game equilibrium provided by $\hat{\phi}$. Notice that there could possibly be many feedback functions providing a mean field game equilibrium. Let $\mathcal{N}$ denote the set of all such feedback functions providing mean field game equilibria, as detailed above, i.e.
\begin{equation*}
    \mathcal{N}=\{\phi:[0,T] \times \mathbb{R} \ni (t,x) \rightarrow \mathbb{R}\ |\ \Phi(\phi)=\phi \}.
\end{equation*}

\subsection{\textbf{Centralized Control: Optimal Control of McKean-Vlasov Type}}\label{sub:MKV_formulation}
The goal of this subsection is to articulate how to compute the cost associated with the control problem of McKean-Vlasov type, $SC^{MKV}$. The central planner considers the following control problem:
\begin{equation*}
\begin{split}
   \hat{\phi}&=\arg \inf_{\phi}SC(\phi) \\
   &=\arg \inf_{\phi}\left[\int_0^T<f(t,\,\cdot\,,\cL(X^{\phi}_t),\phi(t,\,\cdot\,),\cL(\phi(t,X^{\phi}_t))),\cL(X^{\phi}_t)>\,dt \;+\; <g(\,\cdot\,,\cL(X^{\phi}_T)),\cL(X^{\phi}_T)>\right].
\end{split}
\end{equation*}
Thus, the cost of the solution to the optimal control problem of McKean-Vlasov is given by:
\begin{equation*}
    SC^{MKV}=SC(\hat{\phi}).
\end{equation*}
\begin{remark}
We are not concerned with uniqueness for the control of McKean-Vlasov type problem, because $SC^{MKV}=SC(\phi_1)=SC(\phi_2)$ is still well defined even if there are two different optimal feedback functions $\phi_1$ and $\phi_2$ minimizing $SC(\phi)$.
\end{remark}

\subsection{\textbf{Price of Anarchy}}\label{Price_Of_Anarchy}
We have described two approaches to compute the optimal feedback function $\phi$. In the mean field game formulation, we require $\phi \in \mathcal{N}$, where $\mathcal{N}$ denotes the set of feedback functions providing mean field game equilibria. In the optimal control of McKean-Vlasov type formulation, the optimal control to be adopted by all players is computed by a central planner, who optimizes the social cost function $SC(\phi)$ directly. Thus, we necessarily have:
\begin{equation*}
    SC^{MKV} \leq SC(\phi),\ \forall \phi \in \mathcal{N}.
\end{equation*}
In other words, there is a `price of anarchy' associated with allowing players to choose their controls selfishly. We thus define the price of anarchy (denoted $PoA$) as the ratio between the worst case cost for a mean field game equilibrium and the optimal cost computed by a central planner:
\begin{equation*}
    PoA=\frac{\sup_{\phi \in \mathcal{N}}SC(\phi)}{SC^{MKV}}.
\end{equation*}

\section{\textbf{Price of Anarchy for Linear Quadratic Extended Mean Field Games}}\label{sec:PoA_LQ}
The class of linear quadratic extended mean field games is a class of problems for which explicit solutions can be computed analytically, and thus, we can compute the price of anarchy explicitly. To the best of our knowledge, the case of linear quadratic extended mean field games has not been explored in the literature, as well as computing the price of anarchy for this class of games.

To begin, we need to describe in more detail the two problems that will be used to compute the price of anarchy: the linear quadratic extended mean field game, and the linear quadratic control problem of McKean-Vlasov type with dependence on the law of the control. To specify the problems, we only need to specify the drift and cost functions, $b$, $f$, and $g$ introduced in section \ref{se:introduction}. For the linear quadratic models, we take the drift to be linear:
\begin{equation*}
    b(t,x,\mu,\alpha,\nu)=b_1(t)x+\bar{b}_1(t) \bar{\mu}+b_2(t) \alpha+\bar{b}_2(t) \bar{\nu},
\end{equation*}
where $\bar{\mu}$ denotes the mean of the measure $\mu$, namely, $\bar{\mu}=\int_{\mathbb{R}}xd\mu(x)$, and similarly for $\bar{\nu}$. We take the running and terminal costs to be quadratic:
\begin{equation*}
\begin{split}
    f(t,x,\mu,\alpha,\nu)&=\frac{1}{2}\left[q(t)x^2+\bar{q}(t)(x-s(t)\bar{\mu})^2 +r(t)\alpha^2+\bar{r}(t)(\alpha-\bar{s}(t)\bar{\nu})^2\right], \\
    g(x,\mu)&=\frac{1}{2}\left[q_T x^2+\bar{q}_T (x-s_T \bar{\mu})^2\right].
\end{split}
\end{equation*}
\begin{remark}
If $\bar{b}_2(t)\equiv0$ and $\bar{r}(t)\equiv0$, then we have the standard mean field game or control problem of McKean-Vlasov type. (See Theorem \ref{th:exist_uniq} for assumptions that provide existence and uniqueness.)
\label{remark_2}
\end{remark}

\subsection{\textbf{Linear Quadratic Extended Mean Field Games}}\label{sec:EMFG}
To solve the linear quadratic extended mean field game (LQEMFG), we begin by considering the reduced Hamiltonian for this problem:
\begin{equation*}
\begin{split}
    H(t,x,\bar{\mu},\alpha,\bar{\nu},y)=&\left[b_1(t)x+\bar{b}_1(t)\bar{\mu}+b_2(t) \alpha+\bar{b}_2(t)\bar{\nu}\right]y \\
    &+\frac{1}{2}\left[q(t)x^2+\bar{q}(t)(x-s(t)\bar{\mu})^2 +r(t)\alpha^2+\bar{r}(t)(\alpha-\bar{s}(t)\bar{\nu})^2\right],
\end{split}
\end{equation*}
and whenever the flows $\bmubar=(\bar{\mu}_t)_{0\leq t\leq T}$ and $\bnubar=(\bar{\nu}_t)_{0\leq t\leq T}$ are fixed, we consider for each control process $\balpha=(\alpha_t)_{0\leq t\leq T}$ the adjoint equation:
\begin{equation*}
    dY_t=- \partial_x H(t,X_t,\bar{\mu}_t,\alpha_t,\bar{\nu}_t,Y_t)dt+Z_tdW_t,
    \qquad
    Y_T=\partial_xg(X_T, \cL(X_T)).
\end{equation*}
According to the Pontryagin stochastic maximum principle, a sufficient condition for optimality is $\partial_{\alpha}H(t,X_t,\bar{\mu}_t,\hat{\alpha}_t,\bar{\nu}_t,Y_t)=0$. Thus, we find the optimal control:
\begin{equation}
    \hat{\alpha}_t=\frac{\bar{r}(t)\bar{s}(t)\bar{\nu}-b_2(t)Y_t}{r(t)+\bar{r}(t)}.
\label{eq:MFG_opt_alpha_Pontryagin}
\end{equation}
When solving the fixed point step, we identify $\bar{\nu}_t=\mathbb{E}(\hat{\alpha}_t)$. By taking the expectation, we find:
\begin{equation*}
    \bar{\nu}_t = \mathbb{E}(\hat{\alpha}_t)=c^{MFG}(t)\mathbb{E}(Y_t),
	\qquad 
	\text{with} 
	\quad
    c^{MFG}(t)=-\frac{b_2(t)}{r(t)+\bar{r}(t)(1-\bar{s}(t))}.
\end{equation*}
Thus, from equation (\ref{eq:MFG_opt_alpha_Pontryagin}) we have:
\begin{equation}
    \hat{\alpha}_t=a^{MFG}(t) Y_t+b^{MFG}(t)\mathbb{E}(Y_t),
\label{eq:MFG_opt_alpha_Y_expY}
\end{equation}
with:
\begin{equation*}
	a^{MFG}(t)=-\frac{b_2(t)}{r(t)+\bar{r}(t)},
	\qquad
	\text{and}
	\qquad
	b^{MFG}(t)=-\frac{\bar{r}(t)\bar{s}(t)b_2(t)}{(r(t)+\bar{r}(t))(r(t)+\bar{r}(t)(1-\bar{s}(t)))}.
\end{equation*}
Note that $c^{MFG}(t)=a^{MFG}(t)+b^{MFG}(t)$. The solution of the mean field game equilibrium problem is given by the solution to the FBSDE system:
\begin{equation}
\left\{
\begin{array}{lcl}
      \displaystyle  dX_t&=& \left[b_1(t)X_t+\bar{b}_1(t) \mathbb{E}X_t+a^{MFG}(t)b_2(t)Y_t+(b^{MFG}(t)b_2(t)+c^{MFG}(t)\bar{b}_2(t))\mathbb{E}Y_t\right]dt+\sigma dW_t \\[5pt]
        dY_t&=& -\left[(q(t)+\bar{q}(t))X_t-\bar{q}(t)s(t)\mathbb{E}X_t+b_1(t)Y_t \right]dt +Z_t dW_t,
\end{array}
\right.
\label{eq:FBSDE_EMFG}
\end{equation}
with initial condition $X_0=\xi$, a random variable with finite mean and variance, and terminal condition $Y_T=(q_T+\bar{q}_T)X_T-\bar{q}_Ts_T\mathbb{E}X_T$.

This is a linear FBSDE of McKean-Vlasov type, which can be solved explicitly under mild assumptions (or at least in the case of time-independent coefficients which we will consider later. See Appendix \ref{appendix_solving_fbsdes}). Let $\bar{\eta}_t^{MFG}$, $\eta_t^{MFG}$, $\bar{x}_t^{MFG}$, and $v^{MFG}_t$ denote the solutions for this problem as described in the appendix so that:
\begin{align*}
Y_t =\eta_t^{MFG}X_t+(\bar{\eta}_t^{MFG}-\eta_t^{MFG})\bar{x}_t^{MFG}, \quad &\mathbb{E}(Y_t) =\bar{\eta}_t^{MFG}\bar{x}_t^{MFG},\\
\mathbb{E}(X_t) =\bar{x}_t^{MFG}, \quad &Var(X_t)=v^{MFG},
\end{align*}
provide a solution to the LQEMFG problem. Then from the appendix, we have:
\begin{itemize}
	\item a scalar Riccati equation for $\bar{\eta}_t^{MFG}$:
	\begin{equation}
	\dot{\bar{\eta}}^{MFG}_t+ \left[ c^{MFG}(t)(b_2(t)+\bar{b}_2(t)) \right] \cdot (\bar{\eta}^{MFG}_t)^2+ \left(2b_1(t)+\bar{b}_1(t) \right) \cdot \bar{\eta}^{MFG}_t +q(t)+\bar{q}(t)(1-s(t)) =0,
	\label{eq:eta_bar_MFG}
	\end{equation}
	with terminal condition $\bar{\eta}^{MFG}_T = q_T+\bar{q}_T(1-s_T)$, 
	\item a linear first order ODE for $\bar{x}_t^{MFG}$:
	\begin{equation}
	\dot{\bar{x}}^{MFG}_t=\left[ b_1(t)+\bar{b}_1(t)+c^{MFG}(t)(b_2(t)+\bar{b}_2(t)) \cdot \bar{\eta}^{MFG}_t \right] \cdot \bar{x}^{MFG}_t,
	\label{eq:x_bar_MFG}
	\end{equation}
	with initial condition $\bar{x}^{MFG}_0=\mathbb{E}(\xi)$,
	\item a scalar Riccati equation for $\eta_t^{MFG}$:
	\begin{equation}
	\dot{\eta}^{MFG}_t+a^{MFG}(t)b_2(t)\cdot (\eta^{MFG}_t)^2 + 2b_1(t) \cdot \eta^{MFG}_t+q(t)+\bar{q}(t)=0,
	\end{equation}
	with terminal condition $\eta^{MFG}_T = q_T+\bar{q}_T$,
\end{itemize}
and where the dot is the standard ODE notation for a derivative. 
And thus, we obtain explicit solutions for $\bar{x}_t^{MFG}$ and $v_t^{MFG}$:
\begin{IEEEeqnarray}{rCl}
	\bar{x}^{MFG}_t &=& \mathbb{E}(\xi) e^{ \int_0^t\left( b_1(s)+\bar{b}_1(s)+ \left[ c^{MFG}(s)(b_2(s)+\bar{b}_2(s))\right] \cdot \bar{\eta}^{MFG}_s\right) ds} \label{eq:x_bar_MFG_explicit}, \\ 
	v^{MFG}_t &=& Var(\xi)e^{\int_0^t 2\left[b_1(s)+a^{MFG}(s)b_2(s) \cdot \eta^{MFG}_s\right]ds}+\sigma^2 \int_0^t e^{2 \int_s^t \left[b_1(u)+a^{MFG}(u)b_2(u)\eta^{MFG}_u\right] du}ds.
	\label{eq:v_t}
\end{IEEEeqnarray}
Let $SC^{MFG}:=SC(\phi^{MFG})$ in which $\phi^{MFG}=\phi^{MFG}(t,x)$ is the feedback function specified by this solution, namely, from equation (\ref{eq:MFG_opt_alpha_Y_expY}), we have:
\begin{equation*}
    \phi^{MFG}(t,x)=a^{MFG}(t)\eta_t^{MFG}x+\left[a^{MFG}(t)(\bar{\eta}_t^{MFG}-\eta_t^{MFG})+b^{MFG}(t)\bar{\eta}_t^{MFG} \right]\bar{x}_t^{MFG}.
\end{equation*}
Then we can compute the social cost as described in section \ref{sub:MFG_formulation}:
\begin{IEEEeqnarray}{rCl}
    SC^{MFG}&=&\frac{1}{2}\Big[(q_T+\bar{q}_T)v_T^{MFG}+(q_T+\bar{q}_T(1-s_T)^2)(\bar{x}_T^{MFG})^2 \nonumber \\
    && \quad +\int_0^T \left[q(t)+\bar{q}(t)+(r(t)+\bar{r}(t))(a^{MFG}(t)\eta_t^{MFG})^2\right]v_t^{MFG}dt
    \label{eq:SC_MFG_0} \\
    && \quad +\int_0^T\left[q(t)+\bar{q}(t)(1-s(t))^2+(r(t)+\bar{r}(t)(1-\bar{s}(t))^2)(c^{MFG}(t)\bar{\eta}_t^{MFG})^2\right](\bar{x}_t^{MFG})^2dt\Big], \nonumber
\end{IEEEeqnarray}
where we have used the fact that:
\begin{equation*}
    \mathbb{E}(\phi^{MFG}(t,X_t))=c^{MFG}(t)\cdot \bar{\eta}_t^{MFG}\bar{x}_t^{MFG}
	\qquad 
	\text{and}
	\qquad
    Var(\phi^{MFG}(t,X_t))=\left(a^{MFG}(t)\eta_t^{MFG} \right)^2 \cdot v^{MFG}_t.
\end{equation*}

\subsection{\textbf{Linear Quadratic Control of McKean-Vlasov Type Involving the Law of the Control}}\label{sec:EMKV}
To solve the linear quadratic optimal control problem of McKean-Vlasov type involving the law of the control (LQEMKV), we begin with the reduced Hamiltonian, which is the same as in the LQEMFG problem:
\begin{equation*}
\begin{split}
    H(t,x,\bar{\mu},\alpha,\bar{\nu},y)=&\left[b_1(t)x+\bar{b}_1(t)\bar{\mu}+b_2(t) \alpha+\bar{b}_2(t)\bar{\nu}\right]y\\
    &+\frac{1}{2}\left[q(t)x^2+\bar{q}(t)(x-s(t)\bar{\mu})^2 +r(t)\alpha^2+\bar{r}(t)(\alpha-\bar{s}(t)\bar{\nu})^2\right].
\end{split}
\end{equation*}
Since we require $\bar{\nu}_t$ to be equal to $\mathbb{E}(\alpha_t)$ throughout the optimization, it is not sufficient to minimize the Hamiltonian with respect to the $\alpha$ input alone in order to guarantee optimality. A sufficient condition for control problems of McKean-Vlasov type involving the law of the control is derived in \cite{carmona_acciaio}. Since we consider a Hamiltonian that depends on the means of $\bar{\mu}$ and $\bar{\nu}$ instead of the full distributions, the sufficient condition reduces to the following (see section 4 in \cite{carmona_acciaio}):
\begin{equation*}
    \partial_{\alpha}H(t,X_t,\mathbb{E}(X_t),\hat{\alpha}_t,\mathbb{E}(\hat{\alpha}_t),Y_t)+\tilde{\mathbb{E}} \left[\partial_{\bar{\nu}}H(t,\tilde{X}_t,\mathbb{E}(X_t),\hat{\alpha}_t,\mathbb{E}(\hat{\alpha}_t),\tilde{Y}_t) \right]=0,
\end{equation*}
where the adjoint equation is given by:
\begin{equation*}
\left\{
\begin{array}{lcl}
    dY_t&=&-\left[\partial_x H(t,X_t,\bar{\mu}_t,\alpha_t,\bar{\nu}_t,Y_t)+\tilde{\mathbb{E}} \left[\partial_{\bar{\mu}}H(t,\tilde{X}_t,\bar{\mu}_t,\tilde{\alpha}_t,\bar{\nu}_t,\tilde{Y}_t) \right] \right]dt+Z_tdW_t \\[5pt]
    Y_T&=&\partial_xg(X_T, \cL(X_T))+\tilde{\mathbb{E}} \left[\partial_{\bar{\mu}}g(\tilde{X}_T,\cL(X_T))(X_T) \right],
\end{array}
\right.
\end{equation*}
and where $(\boldsymbol{\tilde{X}},\boldsymbol{\tilde{Y}},\boldsymbol{\tilde{\alpha}})$ denotes an independent copy of $(\boldsymbol{X},\boldsymbol{Y},\boldsymbol{\alpha})$. In the present LQ case, the sufficient condition can be used to solve for:
\begin{equation}
    \hat{\alpha}_t=a^{MKV}(t)Y_t+b^{MKV}(t)\mathbb{E}(Y_t),
    \label{eq:alpha_hat_MKV}
\end{equation}
with:
\begin{equation*}
\begin{split}
    a^{MKV}(t)&=-\frac{b_2(t)}{r(t)+\bar{r}(t)}, \quad \text{and} \quad b^{MKV}(t)=-\frac{1}{r(t)+\bar{r}(t)}\left(\bar{b}_2(t)-\frac{\bar{r}(t)\bar{s}(t)(\bar{s}(t)-2)(b_2(t)+\bar{b}_2(t))}{r(t)+\bar{r}(t)(1-\bar{s}(t))^2} \right).
\end{split}
\end{equation*}
Then $\mathbb{E}(\hat{\alpha}_t)=c^{MKV}(t)\mathbb{E}(Y_t)$ with:
   $$ c^{MKV}(t)=a^{MKV}(t)+b^{MKV}(t)=-\frac{b_2(t)+\bar{b}_2(t)}{r(t)+\bar{r}(t)(1-\bar{s}(t))^2}.$$

So the solution of the optimal control problem of McKean-Vlasov type is given by the solution to the FBSDE system:
\begin{equation}
\left\{
\begin{split}
        dX_t=&\left[b_1(t)X_t+\bar{b}_1(t) \mathbb{E}X_t+a^{MKV}(t)b_2(t)Y_t+(b^{MKV}(t)b_2(t)+c^{MKV}(t)\bar{b}_2(t))\mathbb{E}Y_t\right]dt+\sigma dW_t \\[5pt]
        dY_t=&-\left[(q(t)+\bar{q}(t))X_t+s(t)\bar{q}(t)(s(t)-2)\mathbb{E}X_t+b_1(t)Y_t+\bar{b}_1(t)\mathbb{E}Y_t\right]dt +Z_t dW_t,
\end{split}
\right.
\label{eq:FBSDE_EMKV}
\end{equation}
with initial condition $X_0=\xi$, and terminal condition $Y_T=(q_T+\bar{q}_T)X_T+s_T\bar{q}_T(s_T-2)\mathbb{E}X_T$.

As in the previous section, this is a linear FBSDE of McKean-Vlasov type, which can be solved explicitly under mild assumptions (or at least in the case of time-independent coefficients which we will consider later. See Appendix \ref{appendix_solving_fbsdes}). Let $\bar{\eta}_t^{MKV}$, $\eta_t^{MKV}$, $\bar{x}_t^{MKV}$, and $v^{MKV}_t$ denote the solutions for this problem as described in the appendix so that:
\begin{align*}
Y_t =\eta_t^{MKV}X_t+(\bar{\eta}_t^{MKV}-\eta_t^{MKV})\bar{x}_t^{MKV}, \quad &\mathbb{E}(Y_t) =\bar{\eta}_t^{MKV}\bar{x}_t^{MKV},\\
\mathbb{E}(X_t) =\bar{x}_t^{MKV}, \quad &Var(X_t)=v^{MKV},
\end{align*}
provide a solution to the LQEMKV problem. Then from the appendix, we have:
\begin{itemize}
	\item a scalar Riccati equation for $\bar{\eta}_t^{MKV}$:
	\begin{equation}
	\dot{\bar{\eta}}^{MKV}_t+ \left[ c^{MKV}(t)(b_2(t)+\bar{b}_2(t)) \right] \cdot (\bar{\eta}^{MKV}_t)^2+ 2\left(b_1(t)+\bar{b}_1(t) \right) \cdot \bar{\eta}^{MKV}_t +q(t)+\bar{q}(t)(1-s(t))^2 =0,
	\label{eq:eta_bar_MKV}
	\end{equation}
	with terminal condition $\bar{\eta}^{MKV}_T = q_T+\bar{q}_T(1-s_T)^2$, 
	\item a linear first order ODE for $\bar{x}_t^{MKV}$:
	\begin{equation}
	\dot{\bar{x}}^{MKV}_t=\left[ b_1(t)+\bar{b}_1(t)+c^{MKV}(t)(b_2(t)+\bar{b}_2(t)) \cdot \bar{\eta}^{MKV}_t \right] \cdot \bar{x}^{MKV}_t,
	\label{eq:x_bar_MKV}
	\end{equation}
	with initial condition $\bar{x}^{MKV}_0=\mathbb{E}(\xi)$,
	\item a scalar Riccati equation for $\eta_t^{MKV}$:
	\begin{equation}
	\dot{\eta}^{MKV}_t+a^{MKV}(t)b_2(t)\cdot (\eta^{MKV}_t)^2 + 2b_1(t) \cdot \eta^{MKV}_t+q(t)+\bar{q}(t)=0,
	\end{equation}
	with terminal condition $\eta^{MKV}_T = q_T+\bar{q}_T$,
\end{itemize}
and where the dot is the standard ODE notation for a derivative. 
And thus, we obtain explicit solutions for $\bar{x}_t^{MKV}$ and $v_t^{MKV}$:
\begin{IEEEeqnarray}{rCl}
	\bar{x}^{MKV}_t &=& \mathbb{E}(\xi) e^{ \int_0^t\left( b_1(s)+\bar{b}_1(s)+ \left[ c^{MKV}(s)(b_2(s)+\bar{b}_2(s))\right] \cdot \bar{\eta}^{MKV}_s\right) ds}, \label{eq:x_bar_MKV_explicit} \\ 
	v^{MKV}_t &=& Var(\xi)e^{\int_0^t 2\left[b_1(s)+a^{MKV}(s)b_2(s) \cdot \eta^{MKV}_s\right]ds}+\sigma^2 \int_0^t e^{2 \int_s^t \left[b_1(u)+a^{MKV}(u)b_2(u)\eta^{MKV}_u\right] du}ds.
	\label{eq:v_t_MKV}
\end{IEEEeqnarray}
Then $SC^{MKV}=SC(\phi^{MKV})$ where $\phi^{MKV}$ is the feedback function specified by this solution, namely, from equation (\ref{eq:alpha_hat_MKV}), we have:
\begin{equation*}
    \phi^{MKV}(t,x)=a^{MKV}(t)\eta_t^{MKV}x+\left[a^{MKV}(t)(\bar{\eta}_t^{MKV}-\eta_t^{MKV})+b^{MKV}(t)\bar{\eta}_t^{MKV} \right]\bar{x}_t^{MKV}.
\end{equation*}
Then we can compute the social cost, denoted $SC^{MKV}$, as described in section \ref{sub:MKV_formulation}:
\begin{IEEEeqnarray}{rCl}
    SC^{MKV}&=&\frac{1}{2}\Big[(q_T+\bar{q}_T)v_T^{MKV}+(q_T+\bar{q}_T(1-s_T)^2)(\bar{x}_T^{MKV})^2 \nonumber \\
    && \quad +\int_0^T \left[q(t)+\bar{q}(t)+(r(t)+\bar{r}(t))(a^{MKV}(t)\eta_t^{MKV})^2\right]v_t^{MKV}dt \label{eq:SC_MKV_0} \\
    && \quad +\int_0^T\left[q(t)+\bar{q}(t)(1-s(t))^2+(r(t)+\bar{r}(t)(1-\bar{s}(t))^2)(c^{MKV}(t)\bar{\eta}_t^{MKV})^2\right](\bar{x}_t^{MKV})^2dt\Big], \nonumber
\end{IEEEeqnarray}
where we have used the fact that:
\begin{equation*}
\mathbb{E}(\phi^{MKV}(t,X_t))=c^{MKV}(t)\cdot \bar{\eta}_t^{MKV}\bar{x}_t^{MKV}
\qquad 
\text{and}
\qquad
Var(\phi^{MKV}(t,X_t))=\left(a^{MKV}(t)\eta_t^{MKV} \right)^2 \cdot v^{MKV}_t.
\end{equation*}

\subsection{\textbf{Theoretical Results}}
For the remainder of the paper, we assume the coefficients are independent of time and non-negative:
\begin{equation*}
    (b_1(t),\bar{b}_1(t),b_2(t),\bar{b}_2(t),q(t),\bar{q}(t),r(t),\bar{r}(t),s(t),\bar{s}(t))=(b_1,\bar{b}_1,b_2,\bar{b}_2,q,\bar{q},r,\bar{r},s,\bar{s})\in (\mathbb{R}^+)^{10},
\end{equation*}
$$    (q_T,\bar{q}_T,s_T)\in(\mathbb{R}^+)^{3},$$
and therefore,
\begin{equation*}
\begin{split}
    \left(a^{MFG}(t), b^{MFG}(t), c^{MFG}(t) \right)&=\left(a^{MFG}, b^{MFG}, c^{MFG} \right) \\
    \left(a^{MKV}(t), b^{MKV}(t), c^{MKV}(t) \right)&=\left(a^{MKV}, b^{MKV}, c^{MKV}\right).
\end{split}
\end{equation*}
Also, it will be convenient to denote:
\begin{equation}
\lambda := \frac{c^{MFG}}{c^{MKV}} = \frac{b_2}{b_2 + \bar{b}_2}\cdot \frac{ r + \bar{r}(1-\bar{s})^2 }{r + \bar{r}(1-\bar{s})},  \qquad u_t := \lambda \bar{\eta}_t^{MFG}, \qquad w_t := \bar{\eta}_t^{MKV}.
\label{eq:lambda}
\end{equation}
and to make the following observations:
\begin{equation}
a^{MFG}=a^{MKV}=:a, \qquad
\eta_t^{MFG} = \eta_t^{MKV}=:\eta_t, \qquad
v_t^{MFG}=v_t^{MKV}=:v_t.
\label{eq:observations}
\end{equation}

\begin{theorem} \label{th:exist_uniq}
Assume the following:
\begin{equation}
\begin{array}{rclcrclcrcl}
    b_2 &>& 0 & & b_2 + \bar{b}_2 &>& 0 & & \\
    r+\bar{r} &>&0 &&  r+\bar{r}(1-\bar{s}) &>&0 & & r+\bar{r}(1-\bar{s})^2 &>& 0 \\
    q+\bar{q} &>& 0 && q + \bar{q}(1-s) &>& 0 & & q+ \bar{q}(1-s)^2 &>& 0 \\
    q_T + \bar{q}_T &\geq& 0 & \quad & q_T + \bar{q}_T(1-s_T) &\geq& 0 & \quad & q_T + \bar{q}_T(1-s_T)^2 &\geq& 0.
\end{array}
\label{eq:assumptions}
\end{equation}
Then there exists a unique solution to the LQEMFG problem, and there exists a unique solution to the LQEMKV problem. And therefore, $PoA=\frac{SC^{MFG}}{SC^{MKV}}$ where $SC^{MFG}$ and $SC^{MKV}$ are given by equations (\ref{eq:SC_MFG_0}) and (\ref{eq:SC_MKV_0}), respectively.
\end{theorem}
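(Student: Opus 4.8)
The plan is to pass, for each of the two problems, from the McKean--Vlasov FBSDE to the decoupled system of scalar Riccati ODEs plus a linear ODE and a linear SDE of Appendix~\ref{appendix_solving_fbsdes}, and then to show that the sign pattern in \eqref{eq:assumptions} is exactly what makes those Riccati equations solvable on all of $[0,T]$. Concretely, a solution of \eqref{eq:FBSDE_EMFG} is obtained by solving in turn: the scalar Riccati equation \eqref{eq:eta_bar_MFG} for $\bar\eta^{MFG}_t$ and the scalar Riccati equation for $\eta^{MFG}_t$ on $[0,T]$; then the linear ODE \eqref{eq:x_bar_MFG} for $\bar x^{MFG}_t$, whose coefficients are bounded once $\bar\eta^{MFG}$ is bounded, so it has a unique global solution; then the SDE \eqref{eq:X} with the now deterministic $\bar x^{MFG}_t$ substituted for $\EE X_t$, which is a linear Ornstein--Uhlenbeck type SDE with bounded coefficients and hence has a unique strong solution $X_t$; one checks $\EE X_t=\bar x^{MFG}_t$ by taking expectations and invoking uniqueness for the linear ODE, and finally sets $Y_t=\eta^{MFG}_t X_t+(\bar\eta^{MFG}_t-\eta^{MFG}_t)\bar x^{MFG}_t$ and $Z_t=\sigma\,\eta^{MFG}_t$, checking by It\^o that the backward equation and terminal condition hold. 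The identical reduction applies to \eqref{eq:FBSDE_EMKV} via \eqref{eq:eta_bar_MKV}, the Riccati equation for $\eta^{MKV}_t$, and \eqref{eq:x_bar_MKV}; conversely any FBSDE solution has $(\EE X_t,\EE Y_t)$ solving the two-point linear boundary-value problem that $\bar\eta_t$ decouples and is then a standard linear FBSDE decoupled by $\eta_t$, so \emph{both} existence and uniqueness for the LQEMFG and LQEMKV problems reduce to global existence and uniqueness on $[0,T]$ for the four scalar Riccati equations above. The coincidences in \eqref{eq:observations} then drop out because the $\eta$-equations and the variance ODEs \eqref{eq:v_t}--\eqref{eq:v_t_MKV} are literally identical for the two problems.

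The key lemma to establish is: if $\dot\rho_t+A\rho_t^2+2B\rho_t+C=0$ on $[0,T]$ with $\rho_T\ge 0$, $A<0$ and $C>0$, then the equation has a unique solution on $[0,T]$ with $0\le \rho_t\le \max\{\rho_T,\,(B+\sqrt{B^2-AC})/(-A)\}$. For the lower bound I would argue that at any zero of $\rho$ one has $\dot\rho=-C<0$, so if $\rho$ were negative at some $t_*<T$ then, since $\rho_T\ge 0$, it would have to return to $0$ from below at some later time, where necessarily $\dot\rho\ge 0$ --- a contradiction. For the upper bound, in reversed time $\tau=T-t$ the equation reads $d\rho/d\tau=A\rho^2+2B\rho+C$, whose right-hand side is a downward parabola in $\rho$ (as $A<0$) that is negative once $\rho$ exceeds its larger root $(B+\sqrt{B^2-AC})/(-A)$; hence $\rho$ can never rise above the maximum of that root and $\rho_T$. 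A polynomial right-hand side being locally Lipschitz, these two-sided a priori bounds promote the local solution to a unique global one on $[0,T]$.

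Next I would check that \eqref{eq:assumptions} supplies exactly these signs in all four cases: in \eqref{eq:eta_bar_MFG}, $A=c^{MFG}(b_2+\bar b_2)=-b_2(b_2+\bar b_2)/(r+\bar r(1-\bar s))<0$ because $b_2>0$, $b_2+\bar b_2>0$, $r+\bar r(1-\bar s)>0$, while $C=q+\bar q(1-s)>0$ and $\rho_T=q_T+\bar q_T(1-s_T)\ge 0$; in the $\eta^{MFG}$-equation, $A=ab_2=-b_2^2/(r+\bar r)<0$, $C=q+\bar q>0$, $\rho_T=q_T+\bar q_T\ge 0$; and the two $MKV$ Riccati equations are checked the same way using $b_2+\bar b_2>0$, $r+\bar r(1-\bar s)^2>0$, $q+\bar q(1-s)^2>0$ and $q_T+\bar q_T(1-s_T)^2\ge 0$. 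Hence all four Riccati equations, and therefore both FBSDE systems, are uniquely solvable on $[0,T]$. This step is the crux: everything hinges on ruling out finite-time blow-up, and the content of \eqref{eq:assumptions} is precisely the ``coercive LQ'' pattern --- strictly negative quadratic coefficient, strictly positive constant term, non-negative terminal value --- that confines each Riccati solution to a bounded interval.

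Finally, for the ``therefore'' clause: for each fixed mean-field environment the inner optimization defining $\Phi$ in section~\ref{sub:MFG_formulation} is a strictly convex LQ control problem (strict convexity in $\alpha$ being exactly $r+\bar r>0$), so the Pontryagin condition \eqref{eq:MFG_opt_alpha_Pontryagin} is necessary and sufficient and $\Phi(\phi)$ is the unique affine-in-$x$ feedback it produces; since $\Phi(\phi)$ depends on $\phi$ only through the scalar means $\EE X^\phi_t$ and $\EE[\phi(t,X^\phi_t)]$, every fixed point coincides with the $\phi^{MFG}$ constructed above, i.e. $\mathcal N=\{\phi^{MFG}\}$ and $\sup_{\phi\in\mathcal N}SC(\phi)=SC(\phi^{MFG})=SC^{MFG}$. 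Likewise the sufficient condition of \cite{carmona_acciaio} together with the unique solution of \eqref{eq:FBSDE_EMKV} identifies the minimizer of $SC$ over McKean--Vlasov controls, so $SC^{MKV}=SC(\phi^{MKV})$. Substituting $\phi^{MFG}$ and $\phi^{MKV}$ into the social-cost functional and using the moment identities recorded in sections~\ref{sec:EMFG} and~\ref{sec:EMKV} yields the closed forms \eqref{eq:SC_MFG_0} and \eqref{eq:SC_MKV_0}, whence $PoA=SC^{MFG}/SC^{MKV}$.
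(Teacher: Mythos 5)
Your proof is correct, and its skeleton matches the paper's: the paper disposes of this theorem in the remark immediately following it, asserting that existence follows from the explicit construction in Appendix \ref{appendix_solving_fbsdes} (because assumption (\ref{eq:assumptions}) makes the four scalar Riccati equations solvable) and deferring uniqueness to Section 3.5.1 of \cite{Carmona_book}. Where you genuinely diverge is in how you certify global solvability of the Riccati equations and in how you handle uniqueness. The paper leans on the closed-form solution (\ref{eq:riccati_ut_sol_app}), for which one must check that the conditions $B\neq 0$, $BD\geq 0$, $BC>0$ keep the denominator from vanishing on $[0,T]$; you instead prove a self-contained qualitative lemma (nonnegativity from $\dot\rho=-C<0$ at any zero, an upper barrier at the larger root of the reversed-time parabola, then local Lipschitzness plus the two-sided bound to rule out blow-up), which is more robust and makes transparent that (\ref{eq:assumptions}) is exactly the ``negative quadratic coefficient, positive constant term, nonnegative terminal value'' pattern in all four instances -- your sign checks for $c^{MFG}(b_2+\bar b_2)$, $a b_2$, and $c^{MKV}(b_2+\bar b_2)$ are all correct. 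For uniqueness you replace the paper's citation with a direct argument: strict convexity in $\alpha$ (from $r+\bar r>0$) makes the best response unique and affine, the environment enters only through the two scalar means, and the consistency condition for the means is a linear two-point boundary-value problem decoupled by $\bar\eta_t$, so every fixed point of $\Phi$ coincides with $\phi^{MFG}$; the one step you state rather than prove is that global existence of $\bar\eta_t$ yields \emph{uniqueness} (not just existence) for that linear BVP, which follows from the standard observation that $z_t=\bar y_t-\bar\eta_t\bar x_t$ then solves a homogeneous linear ODE with $z_T=0$, hence vanishes. With that sentence added, your argument is complete and somewhat more self-contained than the paper's.
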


\begin{remark}
    Note that existence in Theorem \ref{th:exist_uniq} follows from the explicit construction in Appendix \ref{appendix_solving_fbsdes}, because the above conditions provide existence to the solutions of the Riccati equations. Uniqueness comes from the connection between LQEMFG or LQEMKV and deterministic LQ optimal control. (See section 3.5.1 in \cite{Carmona_book}).
\end{remark}

\begin{proposition}
Assuming (\ref{eq:assumptions}), if furthermore,
\begin{IEEEeqnarray}{rCl}
    \left[\bar{b}_1\bar{\eta}_t^{MKV}+s\bar{q}(s-1)\right] \cdot \bar{x}_t^{MKV} &=& 0,\quad \forall \  t \in [0,T] 
    \label{eq:prop_1_1} \\ [3pt]
    \left(c^{MFG}-c^{MKV} \right) \cdot \bar{\eta}_t^{MKV}\bar{x}_t^{MKV}&=& 0,\quad \forall \  t \in [0,T] 
    \label{eq:prop_1_2} \\ [3pt]
    s_T\bar{q}_T(s_T-1) \cdot \bar{x}_T^{MKV}&=& 0, 
    \label{eq:prop_1_3}
\end{IEEEeqnarray}
then $PoA=1$.
\label{prop: lq_subtract_FBSDEs}
\end{proposition}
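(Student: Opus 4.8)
The plan is to reduce $PoA=1$ to the identity $SC^{MFG}=SC^{MKV}$, and to prove the latter by showing that the \emph{means} of the McKean--Vlasov state and costate already solve the mean field game mean-field system. The first step is to exploit (\ref{eq:observations}): because $a^{MFG}=a^{MKV}$, the Riccati equation for $\eta_t$ and the formula (\ref{eq:v_t}) for $v_t$ depend on the data only through $ab_2,b_1,q,\bar q$, so $\eta_t^{MFG}=\eta_t^{MKV}$ and $v_t^{MFG}=v_t^{MKV}$. Hence in (\ref{eq:SC_MFG_0}) and (\ref{eq:SC_MKV_0}) the boundary term $(q_T+\bar q_T)v_T$ and the whole integral against $v_t\,dt$ are identical, and it remains only to compare the contributions carried by $\bar x_t$ and by the mean control, which (as recorded after (\ref{eq:SC_MFG_0})/(\ref{eq:SC_MKV_0})) equals $c\,\bar\eta_t\bar x_t=c\,\mathbb{E}Y_t$.

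Next I would pass to the ODEs for the means. Taking expectations in (\ref{eq:FBSDE_EMFG}) and writing $\bar y_t^{MFG}:=\mathbb{E}Y_t=\bar\eta_t^{MFG}\bar x_t^{MFG}$, the pair $(\bar x_t^{MFG},\bar y_t^{MFG})$ solves the linear two-point problem $\dot{\bar x}=(b_1+\bar b_1)\bar x+c^{MFG}(b_2+\bar b_2)\bar y$, $\dot{\bar y}=-[(q+\bar q(1-s))\bar x+b_1\bar y]$, $\bar x_0=\mathbb{E}(\xi)$, $\bar y_T=(q_T+\bar q_T(1-s_T))\bar x_T$; the analogous system from (\ref{eq:FBSDE_EMKV}) has $c^{MFG}$ replaced by $c^{MKV}$, $q+\bar q(1-s)$ by $q+\bar q(1-s)^2$, the backward drift coefficient $b_1$ (on $\bar y$) by $b_1+\bar b_1$, and the terminal coefficient by $q_T+\bar q_T(1-s_T)^2$. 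I would then substitute $(\bar x_t^{MKV},\bar y_t^{MKV})$ into the MFG system and read off the three mismatch terms: the forward equation is off by $(c^{MFG}-c^{MKV})(b_2+\bar b_2)\bar y_t^{MKV}$, which vanishes by (\ref{eq:prop_1_2}); the backward equation is off by $[\bar b_1\bar\eta_t^{MKV}+s\bar q(s-1)]\bar x_t^{MKV}$ — using $\bar q(1-s)-\bar q(1-s)^2=s\bar q(1-s)=-s\bar q(s-1)$ and $b_1-(b_1+\bar b_1)=-\bar b_1$ — which vanishes by (\ref{eq:prop_1_1}); and the terminal condition is off by $s_T\bar q_T(s_T-1)\bar x_T^{MKV}$, which vanishes by (\ref{eq:prop_1_3}). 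Thus $(\bar x_t^{MKV},\bar y_t^{MKV})$ is \emph{also} a solution of the MFG mean system; since that linear two-point problem has a unique solution under (\ref{eq:assumptions}) (the Riccati equation (\ref{eq:eta_bar_MFG}) is solvable on all of $[0,T]$, which decouples the system — cf.\ Theorem~\ref{th:exist_uniq} and Appendix~\ref{appendix_solving_fbsdes}), we obtain $\bar x_t^{MFG}=\bar x_t^{MKV}$ and $\mathbb{E}Y_t^{MFG}=\mathbb{E}Y_t^{MKV}$ for every $t$.

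Finally I would feed this back into (\ref{eq:SC_MFG_0}) and (\ref{eq:SC_MKV_0}). All pure-$\bar x_t^2$ terms (including the $\bar x_T^2$ boundary term) match because $\bar x_t^{MFG}=\bar x_t^{MKV}$. For the mean-control terms, write $c^{MFG}\bar\eta_t^{MFG}\bar x_t^{MFG}=c^{MFG}\mathbb{E}Y_t$ and $c^{MKV}\bar\eta_t^{MKV}\bar x_t^{MKV}=c^{MKV}\mathbb{E}Y_t$ with a common $\mathbb{E}Y_t$; their squares differ by $((c^{MFG})^2-(c^{MKV})^2)(\mathbb{E}Y_t)^2=(c^{MFG}+c^{MKV})\,\mathbb{E}Y_t\cdot(c^{MFG}-c^{MKV})\mathbb{E}Y_t$, which is again $0$ by (\ref{eq:prop_1_2}). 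Hence every surviving term of $SC^{MFG}$ and $SC^{MKV}$ agrees, so $SC^{MFG}=SC^{MKV}$ and $PoA=1$.

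I expect the only real obstacle to be the bookkeeping in the second step: getting each of the three mismatch terms into exactly the algebraic shape of (\ref{eq:prop_1_1})--(\ref{eq:prop_1_3}) — in particular the identities relating $q+\bar q(1-s)$ to $q+\bar q(1-s)^2$ and the backward drift coefficient $b_1$ to $b_1+\bar b_1$ — and making sure the uniqueness invoked afterwards is legitimate (it is, since (\ref{eq:assumptions}) guarantees the relevant Riccati solution exists on the whole interval, so the mean system decouples). Note that (\ref{eq:prop_1_2}) is used twice, once to kill the forward-equation mismatch and once to kill the mean-control term, so it must not be discarded after its first use.
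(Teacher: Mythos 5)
Your proof is correct and follows essentially the same route as the paper's (very terse) argument: compare the two FBSDE systems (\ref{eq:FBSDE_EMFG}) and (\ref{eq:FBSDE_EMKV}) — equivalently their mean ODE systems — use $a^{MFG}=a^{MKV}$ and $b^{MFG}b_2+c^{MFG}\bar b_2-(b^{MKV}b_2+c^{MKV}\bar b_2)=(c^{MFG}-c^{MKV})(b_2+\bar b_2)$ to identify the three mismatch terms with conditions (\ref{eq:prop_1_1})--(\ref{eq:prop_1_3}), and conclude that the solutions, hence the social costs, coincide. You have simply filled in the bookkeeping (mean-system comparison, uniqueness via solvability of the Riccati equation, and the final cancellation in the cost formulas) that the paper declares ``clear.''
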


\begin{proof}
    Comparing the FBSDE systems (\ref{eq:FBSDE_EMFG}) and (\ref{eq:FBSDE_EMKV}), and using the fact that $a^{MFG}=a^{MKV}$ and $b^{MFG}-b^{MKV}=c^{MFG}-c^{MKV}$, the result is clear.
\end{proof}

\begin{remark}
	Recall from Remark \ref{remark_2} that in the standard mean field game, $\bar{b}_2=\bar{r}=0$, and thus, $\lambda=1$. Although Proposition \ref{prop: lq_subtract_FBSDEs} is a simple result, we will see shortly in Corollary \ref{corollary_3} that in the case when $\lambda=1$, the sufficient condition given by equations (\ref{eq:prop_1_1})-(\ref{eq:prop_1_3}) is also a necessary condition to have $PoA=1$. We can see that in the standard mean field game setting, Proposition \ref{prop: lq_subtract_FBSDEs} is similar to Theorem 3.4 in \cite{pierre_poa} which characterizes the global efficiency of mean field game equilibria in the case of a separated Hamiltonian. See also Remark 6.1 in \cite{nourian2013nash}, where it is noted that the mean field game and control of McKean-Vlasov type problems are the same for a particular model of flocking.
\end{remark}

\begin{corollary}
	Assuming (\ref{eq:assumptions}), if furthermore, $\bar{b}_1=0$, $s\bar{q}(s-1)=0$, $s_T\bar{q}_T(s_T-1)=0$, and $c^{MFG} = c^{MKV}$,
	then $PoA=1$.
	\label{corollary_1}
\end{corollary}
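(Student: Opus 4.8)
The plan is to obtain Corollary \ref{corollary_1} as an immediate specialization of Proposition \ref{prop: lq_subtract_FBSDEs}: all I need to do is check that the three hypotheses (\ref{eq:prop_1_1})--(\ref{eq:prop_1_3}) of that proposition are forced by the four assumptions listed in the corollary, and then invoke the proposition. First I would look at (\ref{eq:prop_1_1}): under $\bar b_1 = 0$ and $s\bar q(s-1) = 0$ the bracketed factor $\bar b_1 \bar\eta_t^{MKV} + s\bar q(s-1)$ is identically zero, so (\ref{eq:prop_1_1}) holds for every $t$ regardless of the trajectory $\bar x_t^{MKV}$ and regardless of the value of $\bar\eta_t^{MKV}$. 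Next, since $c^{MFG} = c^{MKV}$ the prefactor $c^{MFG} - c^{MKV}$ in (\ref{eq:prop_1_2}) vanishes, so (\ref{eq:prop_1_2}) holds. Finally, $s_T \bar q_T(s_T - 1) = 0$ gives (\ref{eq:prop_1_3}) directly. Having verified all three conditions, Proposition \ref{prop: lq_subtract_FBSDEs} yields $PoA = 1$.

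It is worth noting that no ODE analysis is needed here: unlike weaker sufficient conditions where one would have to argue that $\bar\eta_t^{MKV}$ or $\bar x_t^{MKV}$ happens to vanish, the present assumptions annihilate the relevant coefficients outright, so the verification is purely algebraic. Unwinding Proposition \ref{prop: lq_subtract_FBSDEs} a bit further, under these conditions the drift and terminal data of the FBSDE systems (\ref{eq:FBSDE_EMFG}) and (\ref{eq:FBSDE_EMKV}) coincide (using $a^{MFG} = a^{MKV}$ and $b^{MFG} - b^{MKV} = c^{MFG} - c^{MKV} = 0$), so $\phi^{MFG}$ is an optimal feedback for the LQEMKV problem and hence $SC^{MFG} = SC^{MKV}$.

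There is no genuine obstacle in this argument. The only point requiring a moment's attention is matching the hypotheses of the corollary against the slightly more general hypotheses of Proposition \ref{prop: lq_subtract_FBSDEs}; in particular one should observe that the condition $c^{MFG} = c^{MKV}$ is precisely $\lambda = 1$ via the definition (\ref{eq:lambda}), which is the regime flagged in the remark preceding the corollary as the one in which these sufficient conditions are also necessary.
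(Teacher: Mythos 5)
Your proposal is correct and matches the paper's intent exactly: Corollary \ref{corollary_1} is stated without proof precisely because the hypotheses $\bar{b}_1=0$, $s\bar{q}(s-1)=0$, $c^{MFG}=c^{MKV}$, and $s_T\bar{q}_T(s_T-1)=0$ annihilate the coefficient factors in (\ref{eq:prop_1_1})--(\ref{eq:prop_1_3}), so Proposition \ref{prop: lq_subtract_FBSDEs} applies immediately. Your verification, including the observation that no analysis of $\bar{\eta}_t^{MKV}$ or $\bar{x}_t^{MKV}$ is needed, is exactly the intended argument.
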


\begin{corollary}
	Assuming (\ref{eq:assumptions}), if the initial condition $\xi$ is such that $\mathbb{E}(\xi)=0$, then from equation (\ref{eq:x_bar_MKV_explicit}), $x^{MKV}_t =0$ for all $t\in[0,T]$, and thus, $PoA=1$.
	\label{corollary_2}
\end{corollary}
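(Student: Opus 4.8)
The plan is to reduce Corollary \ref{corollary_2} to Proposition \ref{prop: lq_subtract_FBSDEs}. The linear first-order ODE (\ref{eq:x_bar_MKV}) for $\bar{x}_t^{MKV}$ is homogeneous with initial value $\bar{x}_0^{MKV} = \mathbb{E}(\xi)$; equivalently, the explicit formula (\ref{eq:x_bar_MKV_explicit}) writes $\bar{x}_t^{MKV}$ as $\mathbb{E}(\xi)$ times a strictly positive exponential factor. Hence $\mathbb{E}(\xi) = 0$ forces $\bar{x}_t^{MKV} = 0$ for every $t \in [0,T]$.

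I would then simply note that this single fact makes all three hypotheses (\ref{eq:prop_1_1})--(\ref{eq:prop_1_3}) of Proposition \ref{prop: lq_subtract_FBSDEs} hold: each left-hand side carries an explicit factor $\bar{x}_t^{MKV}$ (resp. $\bar{x}_T^{MKV}$), so it vanishes identically. Proposition \ref{prop: lq_subtract_FBSDEs} then yields $PoA = 1$ directly, which completes the proof.

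An equivalent self-contained argument goes through the cost formulas, and is worth recording. The MFG mean $\bar{x}_t^{MFG}$ satisfies the analogous homogeneous ODE (\ref{eq:x_bar_MFG}), so $\mathbb{E}(\xi)=0$ makes it vanish as well. Setting $\bar{x}_t^{MFG} \equiv \bar{x}_t^{MKV} \equiv 0$ annihilates every $(\bar{x}_t)^2$ and $(\bar{x}_T)^2$ term in both (\ref{eq:SC_MFG_0}) and (\ref{eq:SC_MKV_0}), leaving only the variance contributions; the observations (\ref{eq:observations}) that $a^{MFG}=a^{MKV}$, $\eta_t^{MFG}=\eta_t^{MKV}$ and $v_t^{MFG}=v_t^{MKV}$ then show the two remaining expressions are literally the same, so $SC^{MFG}=SC^{MKV}$.

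There is essentially no obstacle here; the only point needing a word is that $PoA$ be well defined, i.e. $SC^{MKV}\neq 0$. Under (\ref{eq:assumptions}) we have $q+\bar{q}>0$, and since $\sigma>0$ the variance $v_t>0$ for $t\in(0,T]$, so the variance integral in (\ref{eq:SC_MKV_0}) is strictly positive and $SC^{MKV}>0$. Combined with Theorem \ref{th:exist_uniq}, this finishes the argument.
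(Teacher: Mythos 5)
Your proposal is correct and follows the same route the paper intends: the corollary is a direct consequence of Proposition \ref{prop: lq_subtract_FBSDEs}, since $\mathbb{E}(\xi)=0$ forces $\bar{x}_t^{MKV}\equiv 0$ by the explicit exponential formula (\ref{eq:x_bar_MKV_explicit}), and every left-hand side of (\ref{eq:prop_1_1})--(\ref{eq:prop_1_3}) carries that factor. Your added remarks (the direct cancellation in the cost formulas via (\ref{eq:observations}), and the check that $SC^{MKV}>0$ so the ratio is well defined) are correct and slightly more careful than the paper, but do not change the argument.
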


Using the observations in equation (\ref{eq:observations}), we can rewrite:
\begin{IEEEeqnarray}{rCl}
    SC^{MFG} &=& \frac{1}{2}(q_T+\bar{q}_T)v_T+\frac{1}{2}\left(q_T+\bar{q}_T(1-s_T)^2\right)(\bar{x}_T^{MFG})^2+\frac{1}{2}\int_0^T \left[q+\bar{q}+(r+\bar{r})(a\eta_t)^2\right] v_t dt \nonumber \\
    && +\frac{1}{2}\int_0^T\left[q+\bar{q}(1-s)^2+(r+\bar{r}(1-\bar{s})^2)(c^{MFG}\bar{\eta}_t^{MFG})^2\right](\bar{x}_t^{MFG})^2dt,
    \label{eq:SC_MFG_1} \\ [8pt]
    SC^{MKV}&=&\frac{1}{2}(q_T+\bar{q}_T)v_T+\frac{1}{2}\left(q_T+\bar{q}_T(1-s_T)^2\right)(\bar{x}_T^{MKV})^2+ \frac{1}{2}\int_0^T \left[q+\bar{q}+(r+\bar{r})(a\eta_t)^2\right]v_t dt \nonumber \\
    &&+\frac{1}{2}\int_0^T\left[q+\bar{q}(1-s)^2+(r+\bar{r}(1-\bar{s})^2)(c^{MKV}\bar{\eta}_t^{MKV})^2\right)](\bar{x}_t^{MKV})^2dt.
	\label{eq:SC_MKV_1}
\end{IEEEeqnarray}
In the following, we intend to simplify the explicit solutions (\ref{eq:SC_MFG_1}) and (\ref{eq:SC_MKV_1}) for the social costs in the LQEMFG and LQEMKV problems. First, consider the quantity $\int_0^T (\bar{\eta}^{MFG}_t)^2 (\bar{x}^{MFG}_t)^2 dt$. Using equation (\ref{eq:eta_bar_MFG}), we have:
\begin{align*}
&\int_0^T (\bar{\eta}^{MFG}_t)^2 (\bar{x}^{MFG}_t)^2 dt\\
&= - \frac{1}{c^{MFG}(b_2 + \bar{b}_2)} \left[ \int_0^T \dot{\bar{\eta}}^{MFG}_t (\bar{x}^{MFG}_t)^2 dt +  \int_0^T \left[(2b_1 + \bar{b}_1) \bar{\eta}^{MFG}_t +(q+\bar{q}(1-s))\right] (\bar{x}^{MFG}_t)^2 dt \right],
\end{align*}
then using integration by parts for the first term in the bracket:
\begin{align*}
 =  -\frac{ 1}{c^{MFG}(b_2 + \bar{b}_2)} \Bigg[  & \bar{\eta}^{MFG}_T(\bar{x}^{MFG}_T)^2 -\bar{\eta}^{MFG}_0(\bar{x}^{MFG}_0)^2 -2\int_0^T\bar{\eta}^{MFG}_t\bar{x}^{MFG}_t \cdot \dot{\bar{x}}^{MFG}_t dt \\
& + \int_0^T \left[(2b_1 + \bar{b}_1) \bar{\eta}^{MFG}_t +(q+\bar{q}(1-s))\right] (\bar{x}^{MFG}_t)^2 dt \Bigg],
\end{align*}
and together with equation (\ref{eq:x_bar_MFG}) yields:
\begin{align*}
= 2 \int_0^T (\bar{\eta}^{MFG}_t)^2 (\bar{x}^{MFG}_t)^2dt  - \frac{1}{c^{MFG}(b_2 + \bar{b}_2)} \Bigg[ & \bar{\eta}^{MFG}_T (\bar{x}^{MFG}_T)^2 - \bar{\eta}^{MFG}_0 (\mathbb{E}(\xi))^2 \\
&+ \int_0^T  \left[-\bar{b}_1 \bar{\eta}^{MFG}_t +(q+\bar{q}(1-s))\right] (\bar{x}^{MFG}_t)^2 dt \Bigg].
\end{align*}
Finally, we arrive at:
\begin{align*}
\int_0^T (\bar{\eta}^{MFG}_t)^2 (\bar{x}^{MFG}_t)^2 dt= \frac{ \bar{\eta}^{MFG}_T (\bar{x}^{MFG}_T)^2 - \bar{\eta}^{MFG}_0 (\mathbb{E}(\xi))^2 + \int_0^T  \left[-\bar{b}_1 \bar{\eta}^{MFG}_t+(q+\bar{q}(1-s))\right] (\bar{x}^{MFG}_t)^2 dt}{c^{MFG}(b_2 + \bar{b}_2)}.
\end{align*}
If we denote:
\begin{equation*}
h_{var}: = \frac{1}{2}\int_0^T \left[q+\bar{q}+(r+\bar{r})(a\eta_t)^2\right]v_tdt + \frac{1}{2}(q_T+\bar{q}_T)v_T,
\end{equation*}
and use the terminal condition for $\bar{\eta}^{MFG}_T$,  
then equation (\ref{eq:SC_MFG_1}) can be rewritten as:
\begin{IEEEeqnarray}{rCl}
SC^{MFG} & = & h_{var} +  \frac{1}{2} \int_0^T \left[ \bar{b}_1 \lambda \bar{\eta}_t^{MFG} + (q+ \bar{q}(1-s)^2) - \lambda (q + \bar{q}(1-s)) \right] (\bar{x}_t^{MFG})^2 dt, \label{eq:SC_MFG_2} \\
& + & \frac{1}{2} \lambda \left[ \bar{\eta}^{MFG}_0 (\mathbb{E}(\xi))^2 - (q_T+\bar{q}_T(1-s_T)) (\bar{x}_T^{MFG})^2 \right] + \frac{1}{2} (q_T+\bar{q}_T(1-s_T)^2)(\bar{x}_T^{MFG})^2. \nonumber
\end{IEEEeqnarray}
Similarly, equation (\ref{eq:SC_MKV_1}) can be rewritten as:
\begin{equation}
SC^{MKV} = h_{var} + \frac{1}{2} \bar{\eta}^{MKV}_0 (\mathbb{E}(\xi))^2.
\label{eq:SC_MKV_2}
\end{equation}

Let's denote the (weighted) difference between the solutions of the Riccati equations associated with $\bar{\eta}_t^{MFG}$ and $\bar{\eta}_t^{MKV}$ by:
\begin{equation}
	\Delta \bar{\eta}_t := \lambda \bar{\eta}_t^{MFG} - \bar{\eta}_t^{MKV} = u_t - w_t.
\label{eq:delta_eta}
\end{equation} 

\begin{proposition}
	Under assumption (\ref{eq:assumptions}), the difference in the social costs in the LQEMFG and LQEMKV problems can be represented by:
	\begin{equation}
		\Delta SC := SC^{MFG} - SC^{MKV} =  \frac{1}{2}\cdot \frac{(b_2 + \bar{b}_2)^2}{r + \bar{r}(1-\bar{s})^2} \int_0^T (\Delta \bar{\eta}_t \cdot \bar{x}_t^{MFG})^2 dt.
	\label{eq:prop_2}
	\end{equation}
\label{prop:diff_SC}
\end{proposition}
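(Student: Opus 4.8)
The plan is to start from the already-simplified expressions (\ref{eq:SC_MFG_2}) and (\ref{eq:SC_MKV_2}) for $SC^{MFG}$ and $SC^{MKV}$, whose shared term $h_{var}$ cancels in the difference. Recalling $u_t=\lambda\bar{\eta}^{MFG}_t$, $w_t=\bar{\eta}^{MKV}_t$, $\Delta\bar{\eta}_t=u_t-w_t$ from (\ref{eq:lambda}) and (\ref{eq:delta_eta}), the terminal conditions $\bar{\eta}^{MFG}_T=q_T+\bar{q}_T(1-s_T)$ and $\bar{\eta}^{MKV}_T=q_T+\bar{q}_T(1-s_T)^2$, and the fact that $\mathbb{E}(\xi)=\bar{x}^{MFG}_0=\bar{x}^{MKV}_0$, I would first rewrite
\[
\Delta SC=\frac12\int_0^T\Big[\bar{b}_1 u_t+\big(q+\bar{q}(1-s)^2\big)-\lambda\big(q+\bar{q}(1-s)\big)\Big](\bar{x}^{MFG}_t)^2\,dt-\frac12\Big[\Delta\bar{\eta}_t(\bar{x}^{MFG}_t)^2\Big]_0^T.
\]
Everything now lives on the single trajectory $\bar{x}^{MFG}_t$, so the task reduces to evaluating the boundary term.

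To do this, I would derive a Riccati-type ODE for $u_t$ by multiplying (\ref{eq:eta_bar_MFG}) by $\lambda$ and using $c^{MFG}/\lambda=c^{MKV}$ (immediate from $\lambda=c^{MFG}/c^{MKV}$), obtaining $\dot{u}_t+c^{MKV}(b_2+\bar{b}_2)u_t^2+(2b_1+\bar{b}_1)u_t+\lambda(q+\bar{q}(1-s))=0$; subtracting the Riccati equation (\ref{eq:eta_bar_MKV}) for $w_t$ then gives a first-order ODE for $\Delta\bar{\eta}_t$. Combining that ODE with (\ref{eq:x_bar_MFG}) for $\bar{x}^{MFG}_t$ — after rewriting $c^{MFG}(b_2+\bar{b}_2)\bar{\eta}^{MFG}_t=c^{MKV}(b_2+\bar{b}_2)u_t$ — a direct differentiation should give
\[
\frac{d}{dt}\Big[\Delta\bar{\eta}_t(\bar{x}^{MFG}_t)^2\Big]=\Big[c^{MKV}(b_2+\bar{b}_2)(\Delta\bar{\eta}_t)^2+\bar{b}_1 u_t-\lambda\big(q+\bar{q}(1-s)\big)+q+\bar{q}(1-s)^2\Big](\bar{x}^{MFG}_t)^2,
\]
and integrating over $[0,T]$ expresses $\big[\Delta\bar{\eta}_t(\bar{x}^{MFG}_t)^2\big]_0^T$ as the integral of the right-hand side.

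Substituting this back into the formula for $\Delta SC$, the non-quadratic part $\bar{b}_1 u_t-\lambda(q+\bar{q}(1-s))+q+\bar{q}(1-s)^2$ occurs in the two integrals with opposite signs and cancels, leaving $\Delta SC=-\frac12 c^{MKV}(b_2+\bar{b}_2)\int_0^T(\Delta\bar{\eta}_t\,\bar{x}^{MFG}_t)^2\,dt$; plugging in $c^{MKV}=-(b_2+\bar{b}_2)/(r+\bar{r}(1-\bar{s})^2)$ yields (\ref{eq:prop_2}). Under (\ref{eq:assumptions}), Theorem \ref{th:exist_uniq} guarantees that the relevant Riccati solutions exist on all of $[0,T]$, so every manipulation above is justified.

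I expect the middle step to be the main obstacle: checking that $\frac{d}{dt}\big[\Delta\bar{\eta}_t(\bar{x}^{MFG}_t)^2\big]$ really simplifies to exactly the displayed form. This depends on careful bookkeeping of the two Riccati equations, whose linear and constant coefficients differ in a structured way — $(2b_1+\bar{b}_1)$ versus $2(b_1+\bar{b}_1)$, and $q+\bar{q}(1-s)$ versus $q+\bar{q}(1-s)^2$ — so that, after the $\lambda$-rescaling, the $u_t$-quadratic contributions recombine into $c^{MKV}(b_2+\bar{b}_2)(\Delta\bar{\eta}_t)^2$ while the remaining linear and constant terms are precisely those that later cancel against the explicit integral in $\Delta SC$. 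Once this identity is secured, the rest is routine algebra.
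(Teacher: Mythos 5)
Your proposal is correct and follows essentially the same route as the paper: cancel $h_{var}$ in the difference of (\ref{eq:SC_MFG_2}) and (\ref{eq:SC_MKV_2}), derive the first-order ODE for $\Delta\bar{\eta}_t$ from the two ($\lambda$-rescaled) Riccati equations, and absorb the boundary term via $\frac{d}{dt}\big[\Delta\bar{\eta}_t(\bar{x}^{MFG}_t)^2\big]$ together with the ODE (\ref{eq:x_bar_MFG}) for $\bar{x}^{MFG}_t$. The identity you flag as the main obstacle does check out exactly as you state it (the $2(b_1+\bar{b}_1)\Delta\bar{\eta}_t$ terms cancel and the quadratic contributions recombine into $c^{MKV}(b_2+\bar{b}_2)(\Delta\bar{\eta}_t)^2$), so the argument is complete.
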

\begin{proof}
	The solutions $\bar{\eta}^{MFG}_t$ and $\bar{\eta}^{MKV}_t$ for the Riccati equations (\ref{eq:eta_bar_MFG}) and (\ref{eq:eta_bar_MKV}), respectively, are well defined under assumption (\ref{eq:assumptions}) (see Appendix \ref{appendix_solving_fbsdes}). We notice that $\Delta\bar{\eta}_t$ defined in (\ref{eq:delta_eta}) satisfies the following linear first-order differential equation:
	\begin{equation*}
		\frac{d( \Delta{\bar{\eta}}_t)}{dt} = \gamma_t \Delta\bar{\eta}_t + \beta_t,
		\qquad \Delta{\bar{\eta}}_T = \lambda \bar{\eta}_T^{MFG} - \bar{\eta}_T^{MKV},
	\end{equation*}
	with coefficients:
	\begin{equation*} 
		\begin{array}{rcl}
			\gamma_t &=& \displaystyle -2 b_1 - 2\bar{b}_1 + \frac{(b_2 + \bar{b}_2)^2}{r + \bar{r}(1-\bar{s})^2} \left(\lambda \bar{\eta}_t^{MFG} + \bar{\eta}_t^{MKV} \right),\\ [8pt]
			\beta_t &=& \bar{b}_1 \lambda \bar{\eta}_t^{MFG} + (q + \bar{q}(1-s)^2) - \lambda (q + \bar{q}(1-s)).
		\end{array}
	\end{equation*} 
	Since $q_T + \bar{q}_T(1-s_T) = \bar{\eta}_T^{MFG}$, $q_T + \bar{q}_T(1-s_T)^2 = \bar{\eta}_T^{MKV}$ and $\lambda \bar{\eta}_0^{MFG} - \bar{\eta}_0^{MKV} = \Delta\bar{\eta}_0$, we deduce from equations (\ref{eq:SC_MFG_2}) and (\ref{eq:SC_MKV_2}) that:
	\begin{equation*}
		\begin{split}
			SC^{MFG} - SC^{MKV}&= \frac{1}{2} \left[ \Delta\bar{\eta}_0 (\mathbb{E}(\xi))^2 - \Delta\bar{\eta}_T (\bar{x}_T^{MFG})^2 + \int_0^T \beta_t (\bar{x}_t^{MFG})^2 dt \right] \\
			&= \frac{1}{2} \int_0^T \left[ - \frac{d (\Delta\bar{\eta}_t (\bar{x}_t^{MFG})^2) }{dt}  + \left( 	\frac{d( \Delta{\bar{\eta}}_t)}{dt} - \gamma_t \Delta\bar{\eta}_t  \right)(\bar{x}_t^{MFG})^2 \right] dt\\
			&= \frac{1}{2}\int_0^T \left[- 2\Delta{\bar{\eta}}_t \bar{x}_t^{MFG} \dot{\bar{x}}_t^{MFG} - \gamma_t \Delta{\bar{\eta}}_t (\bar{x}_t^{MFG})^2 \right]dt \\
			&= \frac{1}{2} \int_0^T \Delta{\bar{\eta}}_t (\bar{x}_t^{MFG})^2 \left[-2\left(b_1 + \bar{b}_1 - \frac{(b_2+\bar{b}_2)^2}{r + \bar{r}(1-\bar{s})^2} \lambda \bar{\eta}_t^{MFG}\right) - \gamma_t\right] dt\\
			&= \frac{1}{2}\cdot \frac{(b_2 + \bar{b}_2)^2}{r + \bar{r}(1-\bar{s})^2} \int_0^T (\Delta \bar{\eta}_t \cdot \bar{x}_t^{MFG})^2 dt,
		\end{split}
	\end{equation*}
	where we use equation (\ref{eq:x_bar_MFG}) for the fourth equality.	
	
\end{proof}

\begin{remark}
	We can see directly from Proposition \ref{prop:diff_SC} that the social cost in the LQEMFG problem is larger than (or possibly equal to) the social cost in the LQEMKV problem. This result is consistent with the definition of the price of anarchy in section \ref{Price_Of_Anarchy}.
\end{remark}

Note that we can write:
\begin{equation}
PoA =1+ \frac{\Delta SC}{SC^{MKV}}.
\label{eq:PoA_LQ}
\end{equation}

It will be useful for us to note here the scalar Riccati equations associated with $u_t = \lambda \bar{\eta}_t^{MFG}$, $w_t = \bar{\eta}_t^{MKV}$ and $\eta_t$:
    \begin{IEEEeqnarray}{lLLLClClCl}
		\dot{u}_t &- 2A^u u_t &- B u_t^2 &+ C^u &= & 0, &\qquad& u_T &=& D^u,
		\label{eq:riccati_ut} \\
	    \dot{w}_t &- 2A^w w_t &- B w_t^2 &+ C^w &=&0, &\qquad &w_T &= & D^w,
		\label{eq:riccati_wt}\\
	    \dot{\eta}_t &- 2A^\eta \eta_t &- B^\eta \eta_t^2 &+ C^\eta &= & 0, &\qquad& \eta_T &= &D^\eta,
	\label{eq:riccati_etat}
	\end{IEEEeqnarray}
	with:
	\begin{IEEEeqnarray}{rClcrClcrCl}
		A^u &=& - \left(b_1 + \frac{\bar{b}_1}{2}\right), & \qquad & 
		A^w &=& - (b_1 + \bar{b}_1), & \qquad & 
		A^\eta &=& - b_1, \nonumber \\
		B^u &=& \frac{(b_2+\bar{b}_2)^2}{r + \bar{r}(1-\bar{s})^2}, & \qquad &	
		B^w &=& \frac{(b_2+\bar{b}_2)^2}{r + \bar{r}(1-\bar{s})^2}, &\qquad & 
		B^\eta &=& \frac{b_2^2}{r+ \bar{r}}, \nonumber \\
		C^u &=& \lambda (q + \bar{q}(1-s)),& \qquad & 
		C^w &=& q + \bar{q}(1-s)^2, & \qquad &
		C^\eta &=& q + \bar{q}, \nonumber \\
		D^u &=& \lambda(q_T + \bar{q}_T(1-s_T)), & \qquad & 
		D^w &=& q_T + \bar{q}_T(1-s_T)^2, & \qquad & 
		D^\eta &=& q_T + \bar{q}_T .
		\label{eq:ABCD}
	\end{IEEEeqnarray}
If $B^u\neq 0$, $B^u D^u \geq 0$ and $B^uC^u >0$, we have (see equation (\ref{eq:riccati_ut_sol_app}) in Appendix A) the existence and uniqueness for $u_t$ which can be expressed by:
\begin{equation}
		u_t= \frac{C^u(1-e^{-(\delta_u^+ - \delta_u^-)(T-t)})+D^u(\delta^+_u -\delta^-_ue^{-(\delta_u^+ - \delta_u^-)(T-t)} )}{B^uD^u(1-e^{-(\delta_u^+ - \delta_u^-)(T-t)})+ \delta^+_ue^{-(\delta_u^+ - \delta_u^-)(T-t)} -\delta^-_u  },
	\label{eq:riccati_ut_sol}
\end{equation}
with $\delta^\pm_u = -A^u \pm \sqrt{(A^u)^2 + B^u C^u}$. Under assumption (\ref{eq:assumptions}), the above conditions on $B^u$, $C^u$, and $D^u$ are satisfied, and we have $\delta^-_u<0<\delta^+_u$, $u_t > 0$ for all $t\in[0,T)$, and $u_T \geq 0$. We have analogous expressions for $w_t$ and $\eta_t$, in terms of $\delta^\pm_w$ and $\delta^\pm_{\eta}$, respectively. Note that $B^u=B^w=:B$.

It will also be useful to compute the derivative of $u_t$ with respect to time $t$ from the explicit form in equation (\ref{eq:riccati_ut_sol}):
\begin{equation}
\frac{d u_t}{dt} = \frac{\left(B (D^{u})^2 + 2 A^u D^{u} - C^{u} \right) \cdot \left(\delta_u^{+} - \delta_u^{-}\right)^2 e^{- (\delta_u^{+} - \delta_u^{-})(T-t) } }{\left[B D^{u}(1-e^{- (\delta_u^{+} - \delta_u^{-})(T-t)})   + \delta_u^{+}e^{- (\delta_u^{+} - \delta_u^{-})(T-t)}- \delta_u^{-}  \right]^2 }.
\label{eq:riccati_ut_sol_deriv}
\end{equation}
Note that $u_t$ is increasing if $B (D^{u})^2 + 2 A^u D^{u} - C^{u}>0$, and likewise, decreasing if $B (D^{u})^2 + 2 A^u D^{u} - C^{u}<0$.

\begin{theorem}
	Assume (\ref{eq:assumptions}) and the initial condition $\xi$ satisfies $\mathbb{E}(\xi)  \neq 0$. Let $A^u$, $A^w$, $B$, $C^u$, $C^w$, $D^u$, and $D^w$ as defined in equation (\ref{eq:ABCD}).
	\begin{itemize}
		\item When $\bar{b}_1 > 0$, we have $PoA = 1$ if and only if:
		\begin{equation}
		D^u = D^w =:D \quad \text{and} \quad B D^2 + 2 A^{u} D - C^{u} = B D^2 + 2 A^{w} D - C^w  = 0.
		\label{eq:PoA=1_condition_b1_bar>0}
		\end{equation}
		\item When $\bar{b}_1 = 0$, then $A^u = A^w$ and we have $PoA = 1$ if and only if:
		\begin{equation}
			D^u = D^w \quad \text{and} \quad C^u = C^w. 
		\label{eq:PoA=1_condition_b1_bar=0}
		\end{equation}
	\end{itemize}
	\label{thm:PoA_equivalent_1}
\end{theorem}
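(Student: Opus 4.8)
The plan is to reduce the statement to a comparison of the two scalar Riccati equations (\ref{eq:riccati_ut}) and (\ref{eq:riccati_wt}), whose solutions are $u_t=\lambda\bar\eta_t^{MFG}$ and $w_t=\bar\eta_t^{MKV}$. First I would note that, under (\ref{eq:assumptions}), $SC^{MKV}>0$: by (\ref{eq:SC_MKV_2}) one has $SC^{MKV}=h_{var}+\tfrac12\bar\eta_0^{MKV}(\mathbb{E}(\xi))^2$ with $h_{var}\ge 0$, $\bar\eta_0^{MKV}=w_0>0$, and $(\mathbb{E}(\xi))^2>0$. Hence by (\ref{eq:PoA_LQ}), $PoA=1\iff\Delta SC=0$. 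By Proposition \ref{prop:diff_SC} and positivity of $(b_2+\bar b_2)^2/(r+\bar r(1-\bar s)^2)$, this is equivalent to $\int_0^T(\Delta\bar\eta_t\,\bar x_t^{MFG})^2\,dt=0$; and since $\bar x_t^{MFG}=\mathbb{E}(\xi)\exp(\int_0^t(\cdots)\,ds)$ is continuous and nowhere zero when $\mathbb{E}(\xi)\neq0$, while $\Delta\bar\eta_t=u_t-w_t$ is continuous, the integral vanishes iff $u_t\equiv w_t$ on $[0,T]$. So the whole theorem becomes: characterize when these two Riccati solutions coincide identically, keeping in mind $B^u=B^w=:B$.

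When $\bar b_1=0$, we have $A^u=A^w=-b_1$, so (\ref{eq:riccati_ut}) and (\ref{eq:riccati_wt}) differ only through the data $C^u,C^w$ and $D^u,D^w$. If $C^u=C^w$ and $D^u=D^w$, the two equations and their terminal conditions are identical, so uniqueness of the Riccati solution (valid under (\ref{eq:assumptions})) forces $u_t\equiv w_t$. Conversely, if $u_t\equiv w_t$ then $\dot u_t\equiv\dot w_t$; subtracting (\ref{eq:riccati_wt}) from (\ref{eq:riccati_ut}), all of the derivative, linear, and quadratic terms cancel (using $A^u=A^w$ and $u_t=w_t$), leaving $C^u-C^w=0$, while evaluating at $t=T$ gives $D^u=D^w$. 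This yields (\ref{eq:PoA=1_condition_b1_bar=0}).

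When $\bar b_1>0$, we have $A^u-A^w=\bar b_1/2>0$. For the ``only if'' direction, suppose $u_t\equiv w_t=:\zeta_t$; subtracting the two Riccati equations and using $\dot u_t=\dot w_t$, $u_t=w_t$ now leaves $-2(A^u-A^w)\zeta_t+(C^u-C^w)=0$, so $\zeta_t$ is a constant $\zeta$. Plugging a constant into (\ref{eq:riccati_ut}) gives $B\zeta^2+2A^u\zeta-C^u=0$, and the terminal condition forces $\zeta=D^u$, hence $B(D^u)^2+2A^uD^u-C^u=0$; the same applied to (\ref{eq:riccati_wt}) gives $\zeta=D^w$ and $B(D^w)^2+2A^wD^w-C^w=0$. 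Equating the two values of $\zeta$ yields $D^u=D^w$, which together with the two quadratic identities is exactly (\ref{eq:PoA=1_condition_b1_bar>0}). For the ``if'' direction, when $D^u=D^w=:D$ with $BD^2+2A^uD-C^u=0$ the constant function $D$ solves (\ref{eq:riccati_ut}) with the right terminal value, so by uniqueness $u_t\equiv D$; symmetrically, $BD^2+2A^wD-C^w=0$ gives $w_t\equiv D$; hence $u_t\equiv w_t$ and $PoA=1$. (The derivative formula (\ref{eq:riccati_ut_sol_deriv}) offers an alternative route: $u_t$ is constant precisely when $B(D^u)^2+2A^uD^u-C^u=0$.)

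I expect no serious obstacle here: the one quasi-subtle point is verifying the positivity facts ($SC^{MKV}>0$ and $\bar x_t^{MFG}\neq0$), both immediate from the explicit formulas under (\ref{eq:assumptions}) and $\mathbb{E}(\xi)\neq0$, and the repeated elementary observation that a Riccati solution already known to be constant must equal its terminal value and annihilate $BD^2+2AD-C$. Everything else is linear algebra on two explicit one-dimensional ODEs sharing the quadratic coefficient $B$.
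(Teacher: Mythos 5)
Your proposal is correct and follows essentially the same route as the paper: reduce $PoA=1$ to $\Delta SC=0$ via positivity of $SC^{MKV}$, then to $u_t\equiv w_t$ via Proposition \ref{prop:diff_SC} and the non-vanishing of $\bar x_t^{MFG}$, and finally characterize when the two Riccati solutions coincide by subtracting the equations and exploiting constancy. The only cosmetic difference is that you argue "a constant Riccati solution must annihilate $B D^2+2AD-C$" directly from the ODE, whereas the paper reads the same fact off the explicit derivative formula (\ref{eq:riccati_ut_sol_deriv}) — a distinction without substance.
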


\begin{proof}
	From an analogous equation to (\ref{eq:riccati_ut_sol}) for $w_t$, we know that under assumption (\ref{eq:assumptions}), $w_0>0$. Thus, with the assumption $\mathbb{E}(\xi) \neq 0$, we have $0 < SC^{MKV} < \infty$. Hence, $PoA =1$ if and only if $\Delta SC = 0$. Since $\bar{x}_t^{MFG} \neq 0$ for all $t \in [0,T]$, from Proposition \ref{prop:diff_SC} and the continuity of $u_t$ and $w_t$, we deduce that:
	\begin{equation*}
		PoA = 1 \qquad \text{if and only if} \qquad u_t = w_t, \quad \forall  t \in [0,T]. 
	\end{equation*}
	
	From equation (\ref{eq:riccati_ut_sol_deriv}) and the uniqueness of solutions to Riccati equations (\ref{eq:riccati_ut}) and (\ref{eq:riccati_wt}), it is easy to check that if the conditions in (\ref{eq:PoA=1_condition_b1_bar>0}) and (\ref{eq:PoA=1_condition_b1_bar=0}) are satisfied, then $u_t=D^u=D^w = w_t$ for all $ t\in [0,T]$, and thus, $PoA = 1$.	
	
	Suppose now that $PoA =1$. Then $u_t = w_t$ for all $t \in [0,T]$ and clearly:
	$$D^u = u_T = w_T = D^w.$$
	Now, if we take the difference between the two Riccati equations (\ref{eq:riccati_ut}) and (\ref{eq:riccati_wt}), and by using $u_t = w_t$ for all $t \in [0,T]$, we obtain:
	\begin{equation}
		2 (A^u - A^w) w_t = C^u - C^w, \quad \forall \  t \in [0,T].
	\label{eq:wt=constant}
	\end{equation}
	Since $2(A^u - A^w) = \bar{b}_1$, in the case when $\bar{b}_1 =0$ we must have $C^u = C^w$. Otherwise, equation (\ref{eq:wt=constant}) implies that $u_t = w_t = (C^u - C^w) / \bar{b}_1 $, are constant for all $t \in [0,T]$. Thus, the time derivatives of $u_t$ and $w_t$ should be zero. From equation (\ref{eq:riccati_ut_sol_deriv}), and the fact that $\delta_u^+ - \delta_u^- > 0$, we deduce: 
	$$ B (D^{u})^2 + 2 A^u D^u - C^u = 0.$$
	Similarly we also have $B (D^{w})^2 + 2 A^w D^w - C^w = 0$. 
	
\end{proof}

\begin{corollary}
	Assume (\ref{eq:assumptions}) and $\lambda=1$. Then the sufficient condition (equations (\ref{eq:prop_1_1})-(\ref{eq:prop_1_3})) from Proposition \ref{prop: lq_subtract_FBSDEs} is also a necessary condition to have $PoA=1$.
\label{corollary_3}
\end{corollary}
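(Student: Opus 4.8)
The plan is to reduce everything to Theorem \ref{thm:PoA_equivalent_1}, after first noting that when $\lambda=1$ we have $c^{MFG}=c^{MKV}$ (since $\lambda = c^{MFG}/c^{MKV}$ and $c^{MKV}\neq 0$ under (\ref{eq:assumptions})), so condition (\ref{eq:prop_1_2}) is automatic and the sufficient condition of Proposition \ref{prop: lq_subtract_FBSDEs} collapses to (\ref{eq:prop_1_1}) together with (\ref{eq:prop_1_3}). The ``if'' direction is already Proposition \ref{prop: lq_subtract_FBSDEs}, so only ``only if'' needs work. I would first dispose of the degenerate case $\mathbb{E}(\xi)=0$: then $\bar{x}_t^{MKV}\equiv 0$ by (\ref{eq:x_bar_MKV_explicit}), so (\ref{eq:prop_1_1})--(\ref{eq:prop_1_3}) hold trivially and $PoA=1$ by Corollary \ref{corollary_2}, making the equivalence vacuous. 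Hence I may assume $\mathbb{E}(\xi)\neq 0$, so that $\bar{x}_t^{MKV}\neq 0$ and $\bar{x}_t^{MFG}\neq 0$ for every $t\in[0,T]$; this lets me cancel the factors $\bar{x}_t^{MKV}$ and $\bar{x}_T^{MKV}$ in (\ref{eq:prop_1_1}) and (\ref{eq:prop_1_3}), reducing them (using $u_t=\bar{\eta}^{MFG}_t$, $w_t=\bar{\eta}^{MKV}_t$ when $\lambda=1$) to the pointwise identities $\bar{b}_1 w_t = -s\bar{q}(s-1)$ for all $t$ and $s_T\bar{q}_T(s_T-1)=0$.

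Next I would split on $\bar{b}_1$. When $\bar{b}_1=0$, Theorem \ref{thm:PoA_equivalent_1} gives $PoA=1 \iff D^u=D^w$ and $C^u=C^w$. Substituting $\lambda=1$ into (\ref{eq:ABCD}), one checks $D^u=D^w \iff \bar{q}_T(1-s_T)s_T=0 \iff s_T\bar{q}_T(s_T-1)=0$ and $C^u=C^w \iff \bar{q}(1-s)s=0 \iff s\bar{q}(s-1)=0$; since $\bar{b}_1=0$, these are exactly the reduced forms of (\ref{eq:prop_1_1}) and (\ref{eq:prop_1_3}), so the equivalence is immediate. When $\bar{b}_1>0$, Theorem \ref{thm:PoA_equivalent_1} gives $PoA=1 \iff D^u=D^w=:D$ and $BD^2+2A^uD-C^u = BD^2+2A^wD-C^w = 0$. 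The first equality again yields $s_T\bar{q}_T(s_T-1)=0$, i.e. (\ref{eq:prop_1_3}). Subtracting the two quadratic relations cancels $BD^2$ and leaves $2(A^w-A^u)D = C^w-C^u$; with $A^w-A^u=-\bar{b}_1/2$ and $C^w-C^u=\bar{q}s(s-1)$ read off from (\ref{eq:ABCD}) at $\lambda=1$, this is $\bar{b}_1 D = -\bar{q}s(s-1)$. Finally, $BD^2+2A^wD-C^w=0$ forces $dw_t/dt\equiv 0$ by (the $w$-analogue of) equation (\ref{eq:riccati_ut_sol_deriv}), hence $w_t\equiv w_T = D^w = D$, and therefore $\bar{b}_1 w_t = \bar{b}_1 D = -\bar{q}s(s-1) = -s\bar{q}(s-1)$ for all $t$, which is precisely the reduced (\ref{eq:prop_1_1}). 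This closes the ``only if'' direction.

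The main obstacle is purely bookkeeping: accurately translating the abstract conditions ``$D^u=D^w$'', ``$C^u=C^w$'' and the quadratic-vanishing conditions of Theorem \ref{thm:PoA_equivalent_1} back into statements about $s,\bar{q},s_T,\bar{q}_T$ through the specializations in (\ref{eq:ABCD}) at $\lambda=1$, and spotting that in the $\bar{b}_1>0$ case the two quadratic conditions jointly pin down both the common terminal value $D$ and the constant profile $w_t\equiv D$, which is exactly what upgrades the coefficient identity into the $t$-pointwise statement (\ref{eq:prop_1_1}). No new estimates or constructions are needed: the argument runs entirely on Proposition \ref{prop: lq_subtract_FBSDEs}, Theorem \ref{thm:PoA_equivalent_1}, Corollary \ref{corollary_2}, and the explicit Riccati data.
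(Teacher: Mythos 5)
Your proposal is correct and follows essentially the same route as the paper's own proof: dispose of $\mathbb{E}(\xi)=0$ via Corollary \ref{corollary_2}, note that $\lambda=1$ makes (\ref{eq:prop_1_2}) automatic, and then split on $\bar{b}_1=0$ versus $\bar{b}_1>0$, translating the necessary conditions of Theorem \ref{thm:PoA_equivalent_1} back through (\ref{eq:ABCD}) — including the key observation in the $\bar{b}_1>0$ case that the vanishing of $BD^2+2A^wD-C^w$ forces $w_t\equiv D=s\bar{q}(1-s)/\bar{b}_1$, which yields (\ref{eq:prop_1_1}). Your write-up is just a more detailed account of the bookkeeping the paper leaves implicit.
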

\begin{proof}
	Assume $PoA=1$. Since $\lambda=1$, we have $c^{MFG}=c^{MKV}$ and thus, condition (\ref{eq:prop_1_2}) holds. If $\mathbb{E}(\xi)=0$, clearly conditions (\ref{eq:prop_1_1})-(\ref{eq:prop_1_3}) hold, as noted in Corollary \ref{corollary_2}. If $\mathbb{E}(\xi) \neq 0$ and $\bar{b}_1=0$, from Theorem \ref{thm:PoA_equivalent_1}, we have $D^u=D^w$ and $C^u=C^w$ which together with $\lambda=1$ imply conditions (\ref{eq:prop_1_1}) and (\ref{eq:prop_1_3}), similarly as Corollary \ref{corollary_1}. Now, if $\mathbb{E}(\xi) \neq 0$ and $\bar{b}_1>0$, from Theorem \ref{thm:PoA_equivalent_1}, we have $D^u=D^w$ which implies condition (\ref{eq:prop_1_3}). Finally, the condition $B(D)^2+2A^uD-C^u=B(D)^2+2A^wD-C^w=0$ implies $\bar{\eta}^{MKV}_t=w_t=D=\frac{C^u-C^w}{2(A^u-A^2)}=\frac{s\bar{q}(1-s)}{\bar{b}_1}$ and thus, we have condition (\ref{eq:prop_1_1}).
\end{proof}

We study in the following the variation of $PoA$ by letting only one of the coefficients tend to zero or to infinity. In order to make the following computations easier to follow, we repeat equations (\ref{eq:prop_2}), (\ref{eq:SC_MKV_2}), (\ref{eq:x_bar_MFG_explicit}), and (\ref{eq:v_t}), which we recall is equivalent to  equation (\ref{eq:v_t_MKV}), using the above notations. Assuming (\ref{eq:assumptions}), we have:
\begin{IEEEeqnarray}{l}
	\displaystyle \Delta SC= \frac{1}{2}B\int_0^T (u_t-w_t)^2 \cdot (\bar{x}_t^{MFG})^2 dt, \label{eq:Delta_SC_LQ_new_notation} \\[3pt]
	\displaystyle SC^{MKV}= \frac{1}{2}\int_0^T \left[q+\bar{q} + B^{\eta} \eta_t^2 \right] v_t dt + \frac{(q_T + \bar{q}_T)}{2} v_T + \frac{1}{2}w_0 (\mathbb{E}(\xi))^2, \label{eq:SC_MKV_LQ_new_notation} \\[3pt]
	\displaystyle \bar{x}^{MFG}_t =\mathbb{E}(\xi) e^{\int_0^t(b_1+\bar{b}_1-Bu_s)ds}, \label{eq:x_bar_MFG_explicit_new_notation} \\[3pt]	
	\displaystyle v_t = Var(\xi)e^{\int_0^t 2(b_1-B^{\eta} \eta_s)ds}+\sigma^2 \int_0^t e^{2 \int_s^t (b_1-B^{\eta}\eta_u) du}ds. \label{eq:v_t_new_notation}
\end{IEEEeqnarray}
Also for convenience, recall the definition from equation (\ref{eq:lambda}):
$
\displaystyle
\lambda = \frac{b_2}{b_2 + \bar{b}_2}\cdot \frac{ r + \bar{r}(1-\bar{s})^2 }{r + \bar{r}(1-\bar{s})}.
$

\noindent In the following propositions, we utilize the following assumption to make their proofs simpler.	
\begin{assumption}\label{assumption}
	Assume (\ref{eq:assumptions}). In addition, assume: $b_1 > 0$, $D^{u} > 0$, $D^w > 0$, $D^{\eta} >0$ and the initial condition satisfies $\mathbb{E}(\xi) \neq 0$.
\end{assumption}

\begin{proposition}
	Assuming Assumption \ref{assumption}, then:
	\begin{equation*}
			\lim_{r \to \infty} PoA = 1 \qquad \text{and} \qquad
			\lim_{\bar{r} \to \infty} PoA = 1.
	\end{equation*}
	\label{prop:r_bar_r}
\end{proposition}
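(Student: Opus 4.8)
The plan is to start from the identity $PoA = 1 + \Delta SC / SC^{MKV}$ of (\ref{eq:PoA_LQ}) together with the representation $\Delta SC = \tfrac12 B\int_0^T (u_t-w_t)^2 (\bar x_t^{MFG})^2\,dt$ from (\ref{eq:Delta_SC_LQ_new_notation}) (i.e.\ Proposition \ref{prop:diff_SC}). Since $PoA\ge 1$ always, it suffices to show that, along each of the two limits, $\Delta SC\to 0$ while $SC^{MKV}$ stays bounded away from $0$. The decisive remark is that the common quadratic Riccati coefficient
\[
B \;=\; B^u \;=\; B^w \;=\; \frac{(b_2+\bar b_2)^2}{r+\bar r(1-\bar s)^2}
\]
from (\ref{eq:ABCD}) tends to $0$ in both cases (immediately for $r\to\infty$; and for $\bar r\to\infty$ because $(1-\bar s)^2>0$, where we note that (\ref{eq:assumptions}) forces $\bar s\le 1$ in this limit and the boundary value $\bar s=1$, for which $B$ and the pair $(u_t,w_t)$ are already independent of $\bar r$, is degenerate), and that likewise $B^\eta=b_2^2/(r+\bar r)\to 0$. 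Observe also that $\lambda$ in (\ref{eq:lambda}) stays bounded, hence so do all the remaining constants $A^u,A^w,C^u,C^w,D^u,D^w,C^\eta,D^\eta$ in (\ref{eq:ABCD}).

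First I would establish uniform-in-parameter bounds on the Riccati solutions. Using the explicit formula (\ref{eq:riccati_ut_sol}) and its analogues for $w_t$ and $\eta_t$: as $B\to 0$ one has $\delta_u^\pm\to -A^u\pm|A^u|$, so $\delta_u^+$ tends to a positive limit and $\delta_u^-\to 0$; the numerator of (\ref{eq:riccati_ut_sol}) converges, and the denominator stays bounded below by a positive constant (since $\delta_u^+>0$ and $e^{-(\delta_u^+-\delta_u^-)(T-t)}\le 1$). Hence $u_t$, and similarly $w_t$ and $\eta_t$, converges uniformly on $[0,T]$ to the solution of the linear ODE obtained by deleting the quadratic term, and in particular $\sup_{t\in[0,T]}(|u_t|+|w_t|+|\eta_t|)\le M_1$ for a constant $M_1$ independent of the parameter being sent to its limit. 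Feeding this into (\ref{eq:x_bar_MFG_explicit_new_notation}) gives $|\bar x_t^{MFG}|\le |\mathbb E(\xi)|\,e^{(b_1+\bar b_1+BM_1)T}\le M_2$, again uniformly. Therefore
\[
\Delta SC \;=\; \tfrac12 B\int_0^T (u_t-w_t)^2(\bar x_t^{MFG})^2\,dt \;\le\; 2\,B\,T\,M_1^2 M_2^2 \;\xrightarrow{\ }\; 0 .
\]

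Next I would bound $SC^{MKV}$ from below. Under (\ref{eq:assumptions}) every term in (\ref{eq:SC_MKV_LQ_new_notation}) is non-negative, so $SC^{MKV}\ge \tfrac12(q+\bar q)\int_0^T v_t\,dt$, with $q+\bar q>0$. From (\ref{eq:v_t_new_notation}), $v_t\ge \sigma^2\int_0^t e^{2\int_s^t(b_1-B^\eta\eta_u)\,du}\,ds$; since $B^\eta\eta_u\to 0$ uniformly and $b_1>0$, for the parameter large enough $b_1-B^\eta\eta_u\ge b_1/2>0$, so $v_t\ge \sigma^2 t$ and hence $SC^{MKV}\ge \tfrac14(q+\bar q)\sigma^2 T^2>0$, a bound independent of the parameter (here $\sigma>0$). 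Combining the two displays, $0\le \Delta SC/SC^{MKV}\le C\,B\to 0$ for a constant $C$ independent of the parameter, and therefore $PoA\to 1$ in both limits.

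The main obstacle is the uniform control of the Riccati solutions in the second step: one must rule out that $u_t$ or $w_t$ blows up as $B\to 0$. The explicit solution formula (\ref{eq:riccati_ut_sol}) makes this routine, since its denominator stays bounded below by a positive constant, but it is the one place where the argument is not pure non-negativity bookkeeping; alternatively the same uniform bound and convergence can be obtained from a Gr\"onwall / continuous-dependence estimate applied to (\ref{eq:riccati_ut})--(\ref{eq:riccati_wt}), using that the non-quadratic coefficients remain bounded. Everything else reduces to the sign conditions in (\ref{eq:assumptions}) and the positivity of $\sigma$.
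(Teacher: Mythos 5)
Your proof is correct and follows essentially the same route as the paper's: observe that $B$ and $B^{\eta}$ tend to $0$, obtain uniform-in-parameter bounds on $u_t$, $w_t$, $\eta_t$ from the explicit Riccati formula so that $\Delta SC=\tfrac12 B\int_0^T(u_t-w_t)^2(\bar x_t^{MFG})^2\,dt\to 0$, and keep $SC^{MKV}$ bounded away from zero (the paper computes its positive limit by bounded convergence, you bound it below via the $\sigma^2$ term --- an immaterial difference). Your aside about $\bar s=1$ flags a genuine edge case that the paper's one-line treatment of the $\bar r\to\infty$ limit (replacing $\lambda^{r\to\infty}$ by $\tfrac{b_2}{b_2+\bar b_2}(1-\bar s)$) also leaves unaddressed, so you are, if anything, slightly more careful there.
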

\begin{proof}
	First, we consider $r \to \infty$. For every given $r >0$, we have existence and uniqueness of the solutions $u^{r}_t$, $w^{r}_t$ and $\eta^{r}_t$ to the scalar Riccati equations (\ref{eq:riccati_ut})-(\ref{eq:riccati_etat}). Note that we have added the superscript $r$ to emphasize the dependence on this parameter. 
	
	When $r \to \infty$, we have:
	\begin{equation*}
		\lambda^r \longrightarrow \lambda^{r\to\infty} := \frac{b_2}{b_2 + \bar{b}_2},
	\end{equation*}
	and
	\begin{equation*}
	\begin{array}{l}
		B^r \longrightarrow 0,\quad \ B^{\eta,r} \longrightarrow 0,\\
		C^{u,r} \longrightarrow C^{u,r\to \infty} := \lambda^{r \to \infty}(q + \bar{q}(1-s)),\quad \ D^{u,r} \to D^{u,r \to \infty} := \lambda^{r \to \infty}(q_T+ \bar{q}_T(1-s_T)).
	\end{array}
	\end{equation*}

	Let $u^{r \to \infty}:[0,T] \to \mathbb{R}$ be the solution to the linear first-order differential equation:
	\begin{equation*}
		\left({u}^{r \to \infty}_t\right)' - 2A^u u_t^{r \to \infty} + C^{u,r \to \infty} = 0, \qquad u_T^{r \to \infty} = D^{u,r \to \infty}.
	\end{equation*}
	Then we have:
	\begin{equation*}
	u_t^{r \to \infty} = \left(D^{u,r \to \infty} - \frac{C^{u,r \to \infty}}{2A^u} \right) e^{-2A^u (T-t)} +  \frac{C^{u,r \to \infty}}{2A^u}.
	\end{equation*}
	It is easy to show directly from their explicit solutions (see equation (\ref{eq:riccati_ut_sol})) that for every time $t\in [0,T]$, $$\lim_{r \to \infty}u^r_t=u^{r \to \infty}_t, \quad \text{and thus,} \quad \lim_{r \to \infty}B^ru^r_t=0.$$
	
	Next, our goal is to bound the $u^r_t$ uniformly over $t \in [0,T]$ for large $r$.
	Note that  $A^u < 0$, $B^r$, $C^{u,r}$, $\lambda^{r\to \infty}$, $C^{u,r \to \infty}$, $D^{u,r}$, $D^{u,r \to \infty}> 0$, and $\delta^{-,r}_u < 0 < \delta^{+,r}_u$. Let $\epsilon>0$. Then there exists a $r^*>0$ such that $\max\{ B^{r}, C^{u,r}, D^{u,r} \} \leq \max\{C^{u,r \to \infty},D^{u,r \to \infty} \}+\epsilon=: \zeta$ for $r \geq r^*$. Thus, we can deduce that for $r \geq r^*$, and for every $t \in [0,T]$:
	\begin{equation*}
	\left\vert u_t^{r} \right\vert \leq \frac{C^{u,r} + D^{u,r} (\delta^{+,r}_u - \delta^{-,r}_u)}{\delta^{+,r}_u e^{- (\delta^{+,r}_u - \delta^{-,r}_u)(T-t)}} 
	\leq  \frac{ \zeta + 2 \zeta \sqrt{(A^u)^2 + \zeta^2}  }{-2A^u e^{-2T \sqrt{(A^u)^2 + \zeta^2} } }.
	\end{equation*}
	From equation (\ref{eq:x_bar_MFG_explicit_new_notation}) and by the bounded convergence theorem, we have for every $t \in [0,T]$:
	\begin{equation*}
		\lim_{r\to \infty}\bar{x}_t^{MFG,r} = \mathbb{E}(\xi) e^{(b_1+ \bar{b}_1)t} =: \bar{x}_t^{MFG,r\to \infty}.
	\end{equation*}
	Moreover, $\bar{x}_t^{MFG,r}$ is uniformly bounded for $t \in [0,T]$. From the non-negativity of $u^r_t$, we have:
	$$ \left\vert \bar{x}^{MFG,r}_t \right\vert \leq \vert \mathbb{E}(\xi) \vert e^{ (b_1 + \bar{b}_1 ) T }, \quad \forall t \in [0,T]. $$  	
	Similarly, for every $t\in [0,T]$:
	$$\lim_{r\to \infty}w^r_t =: w_t^{r\to \infty},\qquad \lim_{r\to \infty} \eta^r_t =: \eta_t^{r\to \infty},$$
	and the functions $w^{r}_t$ and $\eta^{r}_t$ are uniformly bounded over $t \in[0,T]$ and large $r$. By the bounded convergence theorem we have for every $t \in [0,T]$:
	\begin{equation*}
	\lim_{r \to \infty} \int_0^T (u^r_t - w^r_t)^2 (\bar{x}_t^{MFG,r})^2 dt = \int_0^T (u^{r \to \infty}_t- w^{r \to \infty}_t)^2 (\bar{x}_t^{MFG,r\to \infty})^2 dt \  < \infty,
	\end{equation*}
	and thus, from equation (\ref{eq:Delta_SC_LQ_new_notation}):
	\begin{equation*}
	\lim_{r \to \infty} \Delta SC^r = \lim_{r \to \infty} \frac{1}{2}\cdot B^r\int_0^T (u^r_t - w^r_t)^2 ( \bar{x}_t^{MFG,r})^2 dt = 0.
	\end{equation*}
	
	From equation (\ref{eq:v_t_new_notation}) and by the bounded convergence theorem, we have for every $t \in [0,T]$: 
	$$	\lim_{r\to \infty} v^r_t = Var(\xi)e^{2b_1 t} + \sigma^2 \int_0^t e^{2 b_1 (t-s) } ds =: v^{r \to \infty}_t.
	$$
	We also have $w^{r \to \infty }_0 >0$ and $v^{r \to \infty}_t >0$ for $t>0$. Hence, from equation (\ref{eq:SC_MKV_LQ_new_notation}):
	\begin{equation*}
		\lim_{r \to \infty} SC^{MKV,r} = \frac{1}{2} \left( \int_0^T (q+\bar{q}) v^{r \to \infty}_t dt + (q_T + \bar{q}_T) v^{r\to \infty}_T + w^{r \to \infty}_0 (\mathbb{E}(\xi))^2 \right)  >0.
	\end{equation*}
	Therefore, from equation (\ref{eq:PoA_LQ}), we have: 
	$$\displaystyle \lim_{r \to \infty} PoA^r = 1.$$
	
	By replacing $\lambda^{r \to \infty}$ with $\lambda^{\bar{r} \to \infty}: = \frac{b_2}{b_2 + \bar{b}_2} (1 - \bar{s})$, the proof can be repeated, and we obtain $\displaystyle \lim_{\bar{r} \to \infty} PoA^{\bar{r}} = 1$.
\end{proof}

\begin{proposition}
	Assume Assumption \ref{assumption}. If:
	\begin{equation*}
	\frac{q+\bar{q}(1-s)}{r+\bar{r}(1-\bar{s})} =\frac{q+\bar{q}(1-s)^2}{r+\bar{r}(1-\bar{s})^2},
	\end{equation*}
	then:
	$$ \lim_{b_2 \to \infty} PoA = 1.$$
	Otherwise,
	$$ \lim_{b_2 \to \infty} PoA > 1.$$
\label{prop:b2_to_infty}
\end{proposition}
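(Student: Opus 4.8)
The plan is to work from the identity $PoA = 1 + \Delta SC/SC^{MKV}$ of equation (\ref{eq:PoA_LQ}) and to show that, as $b_2\to\infty$, both $\Delta SC$ and $SC^{MKV}$ vanish at rate $1/b_2$ while $b_2\,\Delta SC$ and $b_2\,SC^{MKV}$ converge to explicit limits; the ratio of these limits then gives $\lim_{b_2\to\infty}PoA$. The relevant limiting data are $\lambda\to\lambda^{\infty}:=\frac{r+\bar r(1-\bar s)^2}{r+\bar r(1-\bar s)}$, $B=B^u=B^w\to\infty$ and $B^{\eta}\to\infty$ (both of order $b_2^2$), while $A^u,A^w,A^{\eta},C^w,D^w,C^{\eta},D^{\eta}$ do not depend on $b_2$ and $C^u\to C^{u,\infty}:=\lambda^{\infty}(q+\bar q(1-s))$, $D^u\to\lambda^{\infty}(q_T+\bar q_T(1-s_T))$. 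The key algebraic observation, which I would check first, is that the hypothesis $\frac{q+\bar q(1-s)}{r+\bar r(1-\bar s)}=\frac{q+\bar q(1-s)^2}{r+\bar r(1-\bar s)^2}$ is precisely the equality $C^{u,\infty}=C^w$.

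The first reduction eliminates $v_t$ from $SC^{MKV}$. Differentiating (\ref{eq:v_t_new_notation}) gives $\dot v_t = 2(b_1-B^{\eta}\eta_t)v_t+\sigma^2$, and together with the Riccati equation (\ref{eq:riccati_etat}) for $\eta_t$ this yields $\frac{d}{dt}(\eta_t v_t) = -\big(q+\bar q+B^{\eta}\eta_t^2\big)v_t+\sigma^2\eta_t$. Integrating over $[0,T]$ and using $\eta_T=q_T+\bar q_T$, $v_0=Var(\xi)$ collapses the quantity $h_{var}$ in (\ref{eq:SC_MKV_LQ_new_notation}) to $h_{var}=\frac12\eta_0 Var(\xi)+\frac{\sigma^2}{2}\int_0^T\eta_t\,dt$, so that $SC^{MKV}=\frac12\eta_0 Var(\xi)+\frac{\sigma^2}{2}\int_0^T\eta_t\,dt+\frac12 w_0(\mathbb{E}(\xi))^2$, a representation involving only the Riccati functions $\eta_t$ and $w_0$.

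The analytic core is the asymptotics of the scalar Riccati solutions. From the closed form (\ref{eq:riccati_ut_sol}) and its derivative (\ref{eq:riccati_ut_sol_deriv}) I would establish: for $b_2$ large, $u_t$ is monotone in $t$, squeezed between its positive equilibrium $u^{+}:=\delta_u^{+}/B$ and $D^u$, with $u_t=u^{+}+O\big(e^{-(\delta_u^{+}-\delta_u^{-})(T-t)}\big)$ (exponentially small in $b_2$, uniformly on $[0,T/2]$); that $b_2 u^{+}\to U:=\sqrt{C^{u,\infty}(r+\bar r(1-\bar s)^2)}$, with the analogous statements for $w_t$ (limit $W:=\sqrt{C^w(r+\bar r(1-\bar s)^2)}$) and $\eta_t$ (limit $E:=\sqrt{(q+\bar q)(r+\bar r)}$); and the uniform two-sided bound $c/b_2\le u_t\le C$ on $[0,T]$ for suitable $0<c\le C$. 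The lower bound forces $|\bar x_t^{MFG}|^2\le(\mathbb{E}(\xi))^2e^{2(b_1+\bar b_1)T}e^{-2cb_2 t}$, so the mass of $(\bar x_t^{MFG})^2\,dt$ concentrates at $t=0$; combined with the uniform approximation $u_t\approx u^{+}$, $w_t\approx w^{+}$ near $0$, the integral in (\ref{eq:Delta_SC_LQ_new_notation}) localizes, giving $\int_0^T(\bar x_t^{MFG})^2\,dt\sim(\mathbb{E}(\xi))^2/(2\delta_u^{+})$, $(u_t-w_t)^2=(u^{+}-w^{+})^2(1+o(1))$ there, and hence $b_2\,\Delta SC\to L_{\Delta}:=\frac{(U-W)^2(\mathbb{E}(\xi))^2}{4U}$, which vanishes if and only if $U=W$. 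On the other hand $b_2\eta_0\to E$, $b_2 w_0\to W$, and $b_2\int_0^T\eta_t\,dt\to ET$, so $b_2\,SC^{MKV}\to L_{MKV}:=\frac12 E\big(Var(\xi)+\sigma^2 T\big)+\frac12 W(\mathbb{E}(\xi))^2$, which is strictly positive since $W>0$ and $\mathbb{E}(\xi)\ne 0$ by (\ref{eq:assumptions}) and Assumption \ref{assumption}.

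Putting the two limits together gives $\lim_{b_2\to\infty}PoA=1+L_{\Delta}/L_{MKV}$. Since $U=W$ is equivalent to $C^{u,\infty}=C^w$, i.e.\ to the stated identity, we conclude $\lim_{b_2\to\infty}PoA=1$ when the identity holds and $\lim_{b_2\to\infty}PoA=1+L_{\Delta}/L_{MKV}>1$ otherwise. The step requiring the most care is the uniform control of the Riccati solutions near the endpoints $t=T$ and $t=0$, where a naive dominated-convergence argument fails (for instance $b_2\eta_t$ is not bounded near $T$); this is handled by observing that $\eta_t-\eta^{+}$, $u_t-u^{+}$, etc.\ are explicit linear-fractional functions of $e^{-(\delta^{+}-\delta^{-})(T-t)}$, which one integrates directly to see that the endpoint contributions are $o(1/b_2)$ (indeed $O(b_2^{-1}\log b_2)$ near $T$) and that $(\bar x_t^{MFG})^2$ concentrates exponentially at $0$. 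Once these estimates are in place, the remaining computations are elementary bookkeeping with (\ref{eq:riccati_ut_sol}) and (\ref{eq:riccati_ut_sol_deriv}).
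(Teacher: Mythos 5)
Your proposal is correct, and it reaches the stated conclusion by a genuinely different route from the paper. The paper argues by one-sided bounds in two separate cases: when the ratios agree it upper-bounds $(b_2+\bar{b}_2)\Delta SC$ by an arbitrary $\epsilon$ (splitting $[0,T]$ into $[0,T/2]$ and $[T/2,T]$) and lower-bounds $SC^{MKV}$ by $\tfrac12 w_0(\mathbb{E}(\xi))^2$; when they disagree it produces a constant positive lower bound for $(b_2+\bar{b}_2)\Delta SC$ and, via its Lemma \ref{lemma:int_b2_eta_t} and a separate treatment of the two variance terms $J_1^{b_2}$, $J_2^{b_2}$, a finite upper bound for $b_2\,SC^{MKV}$ — so in the second case it really only establishes $\liminf PoA>1$. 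You instead compute the exact limits $b_2\,\Delta SC\to L_\Delta$ and $b_2\,SC^{MKV}\to L_{MKV}$ and hence the exact value $\lim PoA=1+L_\Delta/L_{MKV}$, which handles both cases at once and is strictly more informative. The two ingredients that make this work, and which do not appear in the paper, are (i) the identity $\frac{d}{dt}(\eta_t v_t)=-(q+\bar q+B^\eta\eta_t^2)v_t+\sigma^2\eta_t$, which collapses $h_{var}$ to $\tfrac12\eta_0\,Var(\xi)+\tfrac{\sigma^2}{2}\int_0^T\eta_t\,dt$ and eliminates $v_t$ entirely (the paper only uses a one-sided version of this integration by parts inside its bound (\ref{eq:l(t)_transformation_b2_infinity})), and (ii) the sharp limit $b_2\int_0^T\eta_t\,dt\to ET$, a refinement of the paper's Lemma \ref{lemma:int_b2_eta_t}, which indeed requires the boundary-layer analysis near $t=T$ that you flag; your $O(b_2^{-1}\log b_2)$ estimate for that layer is correct. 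I verified the identifications $U=c_u$, $W=c_w$, $E=c_\eta$ against the paper's constants, the equivalence of your hypothesis with $C^{u,\infty}=C^w$ (hence $U=W$), and the localization computation giving $L_\Delta=(U-W)^2(\mathbb{E}(\xi))^2/(4U)$; all check out. One small presentational caveat: in the case $U=W$ the relative statement ``$(u_t-w_t)^2=(u^{+}-w^{+})^2(1+o(1))$'' is vacuous, and what you actually need (and implicitly use) is the additive form $b_2(u^{+}-w^{+})\to U-W$ together with the exponentially small error on $[0,T/2]$; this is not a gap, just something to phrase carefully when writing the argument out in full.
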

\begin{proof}
	When $b_2 \to \infty$, we have:
	\begin{equation*}
	\begin{array}{l}
	\lambda^{b_2} \to \frac{r + \bar{r}(1-\bar{s})^2}{r + \bar{r}(1-\bar{s})} =: \lambda^{b_2 \to \infty}, \quad	B^{b_2} \to \infty, \qquad B^{\eta,b_2} \to \infty\\
	C^{u,b_2} \to \lambda^{b_2 \to \infty} ( q + \bar{q}(1-s)) =: C^{u,b_2 \to \infty}, \qquad D^{u,b_2} \to \lambda^{b_2 \to \infty} (q_T + \bar{q}_T(1-s_T)) =: D^{u,b_2 \to \infty} >0,
	\end{array}
	\end{equation*}
	and $A^{u}, (A^{w}, C^w, D^w), (A^{\eta}, C^{\eta}, D^{\eta})$ are independent of  $b_2$. Moreover, we notice that:
	\begin{equation*}
		\frac{\delta_u^{\pm,b_2}}{b_2 + \bar{b}_2} = -\frac{A^u}{b_2 + \bar{b}_2} \pm \sqrt{\frac{(A^u)^2}{(b_2+\bar{b}_2)^2}+\frac{C^{u,b_2}}{r + \bar{r}(1-\bar{s})^2} } \xrightarrow[b_2 \to \infty]{} \displaystyle \pm \sqrt{\frac{q + \bar{q}(1-s)}{r + \bar{r}(1-\bar{s})} } =: \pm c_{\delta_u},
	\end{equation*}
	and thus, $\lim_{b_2 \to \infty}\delta^{+,b_2}_u - \delta^{-,b_2}_u = +\infty$.
	For the sake of simplicity, denote $h_u(b_2,t) := (b_2 + \bar{b}_2) u_{t}^{b_2}$ and $h_w(b_2,t) := (b_2 + \bar{b}_2)w_t^{b_2}$. From equation (\ref{eq:riccati_ut_sol}), for all $t \in[0,T)$, we deduce:
	\begin{equation*}
	\begin{split}
		h_u(b_2,t) &= \frac{ \displaystyle  \left(\frac{C^{u,b_2}}{b_2 + \bar{b}_2} + D^{u,b_2} \cdot \frac{\delta^{+,b_2}_u}{b_2 + \bar{b}_2}\right) - \left(\frac{C^{u,b_2}}{b_2 + \bar{b}_2} + D^{u,b_2} \cdot \frac{\delta^{-,b_2}_u}{b_2 + \bar{b}_2}\right)e^{- (\delta^{+,b_2}_u - \delta^{-,b_2}_u)(T-t)} }
		{\displaystyle \left(\frac{- \delta^{-,b_2}_u}{(b_2 + \bar{b}_2)^2} + \frac{D^{u,b_2}}{r + \bar{r}(1-\bar{s})^2} \right) + \left(\frac{\delta^{+,b_2}_u}{(b_2 + \bar{b}_2)^2} - \frac{D^{u,b_2} }{r + \bar{r}(1-\bar{s})^2} \right)e^{- (\delta^{+,b_2}_u - \delta^{-,b_2}_u)(T-t)}  } \nonumber \\
		\\
		&\xrightarrow[b_2 \to \infty]{} \quad (r + \bar{r}(1-\bar{s})^2) c_{\delta_u}  =: c_u.
	\end{split}
	\end{equation*}
	Similarly, for all $t \in [0,T)$:
	$$\lim_{b_2 \to \infty} h_w(b_2,t)=  (r + \bar{r}(1-\bar{s})^2) c_{\delta_w} =: c_w,\qquad \text{with } \quad c_{\delta_w} := \sqrt{ \frac{q + \bar{q}(1-s)^2}{r + \bar{r}(1-\bar{s})^2} },$$
	and
	$$ \lim_{b_2 \to \infty}b_2 \eta_t^{b_2} = (r + \bar{r}) c_{\delta_{\eta}} =: c_\eta, \qquad \text{with} \quad c_{\delta_\eta} := \sqrt{ \frac{q + \bar{q}}{r +\bar{r}} }.$$
	
	Next, we derive a strictly positive uniform lower bound for $(b_2 + \bar{b}_2) u_t^{b_2}$ over $[0,T]$ and large $b_2$. Let $\zeta_1 :=\frac{1}{2} \min \left\{ c_{\delta_u}, D^{u,b_2 \to \infty} \right\}$. Then there exists a $b_2^{*,u,lower}>0$ such that for all $b_2 \geq b_2^{*,u,lower}$:
	$$ 
	\max \left\{  \left\vert \frac{\delta_u^{+,b_2}}{b_2 + \bar{b}_2}  - c_{\delta_u} \right\vert, \   \left\vert \frac{\delta_u^{-,b_2}}{b_2 + \bar{b}_2}  - \left(-c_{\delta_u}\right) \right\vert, \left\vert D^{u,b_2} - D^{u,b_2\to \infty} \right\vert,  \left\vert \frac{1}{b_2 + \bar{b}_2} \right\vert  \right\} \leq \zeta_1,
	$$ 
	and thus for all $t \in [0,T]$:
	\begin{IEEEeqnarray}{rCl}
		h_u(b_2,t) &\geq&  (b_2 + \bar{b}_2) \frac{D^{u,b_2} \delta_u^{+, b_2} }{ (\delta_u^{+,b_2} - \delta_u^{-,b_2}) + B^{b_2}D^{u,b_2} } \nonumber \\
		&\geq& \frac{(D^{u,b_2 \to \infty} - \zeta_1 )(c_{\delta_u} - \zeta_1) }{ \zeta_1  (2 c_{\delta_u} + 2 \zeta_1) + (D^{u,b_2 \to \infty} + \zeta_1) / (r+ \bar{r}(1-\bar{s})^2) } =: m_u>0.
	\label{eq:ut_uniform_lower_bound_b2_infinity}
	\end{IEEEeqnarray}
	Then, by the same technique in inequality (\ref{eq:ut_uniform_lower_bound_b2_infinity}), there exists a $b_2^{*,\eta, lower} > 0$ and $m_\eta >0$ such that for all $b_2 \geq b_2^{*,\eta,lower}$ and all $t\in[0,T]$:
	\begin{equation}
	   b_2 \eta_t^{b_2} \geq m_\eta.
	\label{eq:eta_t_uniform_lower_bound_b2_infinity}
	\end{equation}

	From equation (\ref{eq:riccati_ut_sol_deriv}), we see that $t \mapsto u_t^{b_2}$ is increasing if $B^{b_2} (D^{u,b_2})^2 + 2 A^u D^{u,b_2} - C^{u,b_2} > 0$. Since 
	$\lim_{b_2 \to \infty} B^{b_2} (D^{u,b_2})^2 + 2 A^u D^{u,b_2} - C^{u,b_2} = \infty,$ there exists a $b_2^{*,u,upper} >0$ such that for all $b_2 \geq b_2^{*,u,upper}$, we have $|D^{u,b_2} -D^{u,b_2 \to \infty} | \leq 1$ and $t \mapsto u_t^{b_2}$ is increasing.
	Therefore,
	\begin{equation*}
	u_t^{b_2}\leq u_T^{b_2} = D^{u,b_2} \leq D^{u,b_2 \to \infty} + 1, \quad \forall \  t\in[0,T],\ b_2 \geq b_2^{*,u,upper}.
	\label{eq:u_t_bound}
	\end{equation*}
	By the same argument for $w_t^{b_2}$ and $\eta_t^{b_2}$, there exists a $b_2^{*,upper} \geq b_2^{*,u,upper}$ such that:
	\begin{equation}
	\max \left\{ \left\vert u_t^{b_2} \right\vert , \left\vert w_t^{b_2} \right\vert, \left\vert \eta_t^{b_2} \right\vert \right\} \leq M, \quad \forall \  t\in[0,T],\ b_2 \geq b_2^{*,upper},
	\label{eq:ut_wt_upper_bound_b2_infty}
	\end{equation}
	and such that the functions $t \mapsto u_t^{b_2}$, $t \mapsto w_t^{b_2}$ and $t \mapsto \eta_t^{b_2}$ are increasing on $[0,T]$. \\
	
	\textbf{Case 1:} Assume:
	\begin{equation*}
	\frac{q+\bar{q}(1-s)}{r+\bar{r}(1-\bar{s})}=\frac{q+\bar{q}(1-s)^2}{r+\bar{r}(1-\bar{s})^2}.
	\end{equation*}
	Then $c_{\delta_u}=c_{\delta_w}$ and therefore, $c_u = c_w =: c$. 
	We want to show that $\lim_{b_2 \to \infty} \frac{\Delta SC^{b_2}}{SC^{MKV,b_2}} = 0$. Our approach is to split the interval $[0,T]$ into two parts: $[0,T/2]$ and $[T/2,T]$.	
	Since $v^{b_2}_t \geq 0$ for all $t\in [0,T]$, from equations (\ref{eq:Delta_SC_LQ_new_notation}) and (\ref{eq:SC_MKV_LQ_new_notation}), we have $SC^{MKV,b_2} \geq w_0^{b_2} (\mathbb{E}(\xi))^2$ and thus:
	\begin{equation}
	\begin{split}
		\frac{\Delta SC^{b_2}}{SC^{MKV,b_2}} &\leq \frac{1}{w_0^{b_2} \mathbb{E}(\xi)^2 } \left( B^{b_2} \int_{0}^{\frac{T}{2}} (u_t^{b_2} - w_t^{b_2})^2 (\bar{x}_t^{MFG,b_2})^2 dt + B^{b_2} \int_{\frac{T}{2}}^{T} (u_t^{b_2} - w_t^{b_2})^2 (\bar{x}_t^{MFG,b_2})^2 dt \right) \\ 
		&=  \frac{1}{(b_2 + \bar{b}_2) w_0^{b_2} } \left( I_1^{b_2} + I_2^{b_2} \right),
	\end{split}
	\label{eq:I_1_plus_I_2}
	\end{equation}
	with:
	\begin{IEEEeqnarray*}{rCl}
	I_1^{b_2} &=& \frac{(b_2 + \bar{b}_2)^3}{r + \bar{r}(1-\bar{s})^2}  \int_0^{\frac{T}{2}} (u_t^{b_2} - w_t^{b_2})^2 e^{2(b_1 + \bar{b}_1)t} \exp \left( -2B^{b_2} \int_0^{t} u_s^{b_2} ds 
	\right) dt \\
	&=& \frac{b_2 + \bar{b}_2}{r + \bar{r}(1-\bar{s})^2} \int_0^{\frac{T}{2}} [ h_u(b_2,t)- h_w(b_2,t) ]^2 \cdot e^{2(b_1 + \bar{b}_1)t} \exp \left( - \frac{2 (b_2 + \bar{b}_2)}{r+ \bar{r}(1-\bar{s})^2}  \int_0^t h_u(b_2,s) ds \right) dt, 
	\end{IEEEeqnarray*}
	and:
	$$
		I_2^{b_2} = \frac{(b_2 + \bar{b}_2)^3}{r + \bar{r}(1-\bar{s})^2}  \int_{\frac{T}{2}}^T (u_t^{b_2} - w_t^{b_2})^2 e^{2(b_1 + \bar{b}_1)t} \exp \left( -\frac{2 (b_2 + \bar{b}_2)}{r + \bar{r}(1-\bar{s})^2} \int_0^{t} h_u(b_2,s) ds \right) dt.
	$$
	
	Fix $\epsilon>0$. In the following, we show that $I_1^{b_2} \leq \epsilon$ and $I_2^{b_2} \leq \epsilon$ for large $b_2$. First, consider $I_1^{b_2}$. Recall that for $t \in[0,T/2]$, we have $\lim_{b_2 \to \infty}h_u(b_2,t) = \lim_{b_2 \to \infty}h_w(b_2,t) = c$, and for all $b_2 \geq b_2^{*,upper}$, the functions $[0,T/2] \ni s \mapsto u_s^{b_2}$ and $[0,T/2] \ni s \mapsto w_s^{b_2}$ are increasing, and thus, $[0,T/2] \ni s \mapsto h_u(b_2,t)$ and $[0,T/2] \ni s \mapsto h_w(b_2,t)$ are increasing. (Note that $T/2<T$ is chosen arbitrarily, since the above limits do not hold at $T$.) Let
	$ \zeta_2: = \min \left\{ \frac{c}{2},  \frac{1}{2} e^{-T(b_1+\bar{b}_2)} \sqrt{\epsilon c} \right\}.$
	Then there exists a $b_2^{*,I_1}  \geq b_2^{*,upper}$ such that for all $b_2 \geq b_2^{*,I_1}$ and all $s\in [0, T/2]$ we have:
	\begin{equation*}
	\begin{array}{lclclclcl}
		c - \zeta_2 &\leq& h_u(b_2,0) &\leq& h_u(b_2,s) &\leq& h_u(b_2, T/2 ) &\leq& c + \zeta_2, \\
		c - \zeta_2 &\leq& h_w(b_2,0) &\leq& h_w(b_2,s) &\leq& h_w(b_2, T/2 ) &\leq& c + \zeta_2.
	\end{array}
	\end{equation*}
	Thus, for any $t \in [0,T/2]$ and $b_2 \geq b_2^{*,I_1}$:
	$$
		\left\vert h_u(b_2, t) - h_w(b_2,t) \right\vert^2 \leq  4 \zeta_2^2 \quad \text{and} \quad \int_0^t h_u(b_2,s)ds \geq (c - \zeta_2) t \geq \frac{c}{2}\cdot t.
	$$
	Therefore,
	\begin{IEEEeqnarray}{rCl}
		I_1^{b_2} &\leq& 4\zeta_2^2e^{2T(b_1 + \bar{b}_1)} \cdot\frac{(b_2 + \bar{b}_2)}{r+ \bar{r}(1-\bar{s})^2} \int_0^{\frac{T}{2}} \exp \left(  - \frac{2 (b_2 + \bar{b}_2)}{r + \bar{r}(1-\bar{s})^2} \cdot \frac{c}{2}\cdot t \right) dt \nonumber \\
		&=& \frac{4e^{2T(b_1 + \bar{b}_1)}}{c} \left( 1 - e^{ - \frac{(b_2 + \bar{b}_2) c}{r + \bar{r}(1-\bar{s})^2}  \frac{T}{2} } \right)\zeta_2^2 \leq \epsilon, 
	\label{eq:I1_ineq_b2_infinity}
	\end{IEEEeqnarray}
	where the last inequality comes from the definition of $\zeta_2$.
	
	Next, consider $I_2^{b_2}$. Since $u_t^{b_2}$ is positive over $[0,T]$, we know from the inequalities (\ref{eq:ut_uniform_lower_bound_b2_infinity}) and (\ref{eq:ut_wt_upper_bound_b2_infty}) that for all $b_2 \geq \max\{ b_2^{*,upper}, b_2^{*,u,lower} \}$ and all $t \in [T/2,T]$:
	\begin{equation*}
		\left\vert u_t^{b_2} - w_t^{b_2} \right\vert \leq \sup_{0 \leq s \leq T} \left\vert u_s^{b_2} \right\vert + \left\vert w_s^{b_2} \right\vert  \leq 2M,
		\qquad \text{and} \qquad
		\int_0^{t} h_u(b_2,s) ds \geq \int_0^{\frac{T}{2}} h_u(b_2,s) ds \geq \frac{T}{2} m_u >0.
	\end{equation*}
	Hence, there exists a $b_2^{*,I_2} \geq \max \{ b_2^{*,upper}, b_2^{*,u,lower} \}$ such that for all $b_2 \geq b_2^{*,I_2}$:
	\begin{IEEEeqnarray}{rCl}
		I_2^{b_2} &\leq&  \frac{(b_2+ \bar{b}_2)^3}{r + \bar{r}(1- \bar{s})^2} \cdot 4M^2 e^{2(b_1 + \bar{b}_1) T} \int_{\frac{T}{2}}^{T} \exp\left( - \frac{T(b_2 + \bar{b}_2)}{r + \bar{r}(1-\bar{s})^2} \cdot m_u \right) dt \nonumber\\	
		&=& \kappa_1 (b_2 + \bar{b}_2)^3 e^{- \kappa_2 (b_2 + \bar{b}_2)} \leq \epsilon,
	\label{eq:I2_b2_infty}
	\end{IEEEeqnarray}
	with $\kappa_1: = \frac{2 TM^2 e^{2(b_1 + \bar{b}_1) T} }{r + \bar{r}(1-\bar{s})^2} >0$ and $\kappa_2 :=  \frac{T  m_u}{r + \bar{r}(1-\bar{s})^2}>0$ are constants independent of $b_2$. 	
	
	Let $b_2^* := \max \{ b_2^{*,I_1}, b_2^{*,I_2}\}$. Then inequalities (\ref{eq:I_1_plus_I_2}), (\ref{eq:I1_ineq_b2_infinity}) and (\ref{eq:I2_b2_infty}) give for $b_2 \geq b_2^*$:
	\begin{equation*}
		\frac{\Delta SC^{b_2}}{SC^{MKV,b_2}} \leq \frac{I_1^{b_2} + I_2^{b_2}}{(b_2 + \bar{b}_2) w_0^{b_2}} \leq \frac{ \epsilon + \epsilon }{h_w(b_2,0)} \leq \frac{2\epsilon}{c/2}=\frac{4\epsilon}{c}.
	\end{equation*}
	Since the proof holds for arbitrary $\epsilon>0$, and $c=\sqrt{(q+\bar{q}(1-s)^2)(r+\bar{r}(1-\bar{s})^2)}>0$ is independent of $b_2$ and $\epsilon$, we conclude:
	\begin{equation*}
		\lim_{b_2 \to \infty} \frac{\Delta SC^{b_2}}{SC^{MKV,b_2}} = 0, 
	\end{equation*}
	and thus, from equation (\ref{eq:PoA_LQ}):
	$$ \lim_{b_2 \to \infty} PoA^{b_2} = 1.$$\\
	
	\textbf{Case 2:} Assume:
	\begin{equation*}
	\frac{q+\bar{q}(1-s)}{r+\bar{r}(1-\bar{s})} \neq \frac{q+\bar{q}(1-s)^2}{r+\bar{r}(1-\bar{s})^2}.
	\end{equation*}
	Then $c_u \neq c_w$. We want to show that $\lim_{b_2 \to \infty}PoA^{b_2}>1$. To do so, we will show that $(b_2 + \bar{b_2}) \Delta SC^{b_2}\geq c_{num}>0$ and $b_2 SC^{MKV,b_2} \leq M_{den}<\infty$ for large $b_2$, where $c_{num}$ and $M_{den}$ are two constants independent of $b_2$. We assume in the following that $b_2 \geq b_2^{*,basic} := \max \{b_2^{*,u,lower}, b_2^{*,\eta,lower}, b_2^{*,upper} \}$, as defined prior to Case 1. Therefore, $s \mapsto u_s^{b_2}$, $s \mapsto w_s^{b_2}$ and $s \mapsto \eta_s^{\eta}$ are increasing functions. Moreover, from inequalities (\ref{eq:ut_uniform_lower_bound_b2_infinity})-(\ref{eq:ut_wt_upper_bound_b2_infty}), we have $h_u(b_2,t) \geq m_u>0$, $b_2 \eta_t^{b_2} \geq m_\eta>0$, and $\eta_t^{b_2} \leq M<\infty$, for all $t\in[0,T]$.
	
	Step 1: We derive a lower bound for $(b_2+\bar{b}_2)\Delta SC^{b_2}$ by adapting the techniques used in inequality (\ref{eq:I1_ineq_b2_infinity}).  We have shown that for every $t \in[0,T/2]$,  $\lim_{b_2 \to \infty} h_u(b_2, t)= c_u$ and $\lim_{b_2 \to \infty} h_w(b_2, t)= c_w$. Let $\zeta_3 = \ln(2) \frac{r + \bar{r}(1-\bar{s})^2}{T (c_u + \vert c_u - c_w \vert / 4)} - \bar{b}_2$. Then, there exists a $b_2^{*,num} \geq \max \{ b_2^{*,basic}, \zeta_3 \}$ such that for all $b_2 \geq b_2^{*,num}$ and all $s \in [0, T/2]$:
	\begin{equation*}
	\begin{array}{lclclclcl}
		c_u - \left\vert c_u - c_w  \right\vert /4 &\leq& h_u(b_2, 0) &\leq& h_u(b_2,s) &\leq& h_u(b_2, T/2) &\leq& c_u + \left\vert c_u - c_w \right\vert /4, \\
		c_w -  \left\vert c_u - c_w  \right\vert /4 &\leq& h_w(b_2, 0) &\leq& h_w(b_2,s) &\leq& h_w(b_2, T/2) &\leq& c_w + \left\vert c_u - c_w  \right\vert /4,
	\end{array}
	\end{equation*}
	which implies that for all $t \in [0,T/2]$:
	$$ \left\vert h_u(b_2, t) - h_w(b_2,t)\right\vert \geq \frac{1}{2} \left\vert c_u - c_w \right\vert, \qquad \int_0^t h_u(b_2, s) ds \leq \left( c_u + \frac{1}{4} \left\vert c_u - c_w \right\vert \right) t.
	$$
	Thus, similar to inequality (\ref{eq:I1_ineq_b2_infinity}), for all $b_2 \geq b_2^{*,num}$, we deduce:
	\begin{IEEEeqnarray}{rCl}
		(b_2 + \bar{b}_2) \Delta SC ^{b_2} &\geq & \frac{b_2 + \bar{b}_2}{2} B^{b_2} \int_0^{\frac{T}{2}} (u^{b_2}_t - w^{b_2}_t)^2 (\bar{x}^{MFG, b_2})^2 dt \nonumber \\
		&\geq& \frac{\left\vert c_u - c_w \right\vert^2 (\mathbb{E}(\xi))^2}{8 (r+ \bar{r}(1-\bar{s})^2 )} \cdot (b_2 + \bar{b}_2) \int_0^{\frac{T}{2}} \exp \left[- \frac{2 (b_2 + \bar{b}_2)}{r +\bar{r}(1-\bar{s})^2} \left( c_u + \frac{1}{4} \left\vert c_u - c_w \right\vert \right) t \right] dt \nonumber \\
		&= & \frac{\left\vert c_u - c_w \right\vert^2 (\mathbb{E}(\xi))^2}{16 (c_u +  \vert c_u - c_w \vert / 4)} \left( 1- e^{- \frac{T (c_u + \vert c_u - c_w \vert/4 )}{r+\bar{r}(1-\bar{s})^2}(b_2 + \bar{b}_2)} \right) \nonumber \\
		&\geq & \frac{\left\vert c_u - c_w \right\vert^2 (\mathbb{E}(\xi))^2}{16 (c_u +  \vert c_u - c_w \vert / 4)}\cdot \frac{1}{2} =: c_{num}.
	\label{eq:numerator_ineq_b2_infinity}	
	\end{IEEEeqnarray}
	
	Step 2: We derive an upper bound for $b_2 SC^{MKV,b_2}$. From equation (\ref{eq:SC_MKV_LQ_new_notation}), we have: 
	\begin{equation}
	b_2 SC^{MKV,b_2} 	=  \underbrace{ \left[ \frac{q + \bar{q}}{2} \int_0^T b_2 v_t dt + \frac{q_T + \bar{q}_T}{2} b_2 v_T + \frac{b_2}{2} w_0^{b_2} (\mathbb{E}(\xi) )^2 \right] }_\text{\large{$=:J^{b_2}_1$}}  + \underbrace{ \frac{1}{2} \int_{0}^T  B^{\eta,b_2} (\eta_t^{b_2})^2 \cdot b_2 v_t dt }_\text{\large{$=:J^{b_2}_2$}}.
	\label{eq:J1_plus_J2_b2_infinity}
	\end{equation}	
	We derive the following two results which are useful for deriving upper bounds for $J^{b_2}_1$ and $J^{b_2}_2$. First, let $\kappa_3 := \frac{2 m_{\eta}}{r + \bar{r}}>0$ and $\displaystyle l(b_2,t): = \int_0^t e^{- 2 B^{\eta,b_2} \int_s^t \eta^{b_2}_u du} ds$. Since $b_2 \eta_t \geq m_\eta$ for all $t \in [0,T]$:
	\begin{equation}
		b_2 \cdot l(b_2,t) \leq b_2 \int_0^t \exp \left(- 2 \frac{b_2}{r + \bar{r}} m_\eta \cdot (t- s) \right) ds = \frac{1}{\kappa_3} (1 - e^{- \kappa_3 b_2 t}) \leq \frac{1}{\kappa_3}, \quad \forall \  t \in [0,T].
	\label{eq:l(t)_upper_bound_b2_infinity}
	\end{equation}
	Next, using equation (\ref{eq:riccati_etat}), integration by parts, and since $-A^\eta,\eta^{b_2}_t,C^\eta,l(b_2,t)\geq 0$ we deduce:
	\begin{IEEEeqnarray*}{rCL}
		 \int_0^T B^{\eta, b_2} (\eta_t^{b_2} )^2 \cdot l(b_2,t) dt 
		&= & \int_0^T  (\eta_t^{b_2})' \cdot l(b_2,t) dt + \int_0^T \left( -2A^{\eta} \eta_t^{b_2} + C^{\eta} \right) \cdot l(b_2,t) dt \nonumber \\
		&\geq & \left[ \eta_T^{b_2} l(b_2,T) - \eta_0^{b_2} l(b_2,0) - \int_0^T \eta_t^{b_2} \cdot \frac{\partial(l(b_2,t))}{\partial t} dt \right]  \nonumber \\
		&= & - \int_0^T \eta_t^{b_2} \cdot \left[ l(b_2,t)\cdot( - 2 B^{\eta,b_2} \eta_t^{b_2} ) + 1 \right] dt + l(b_2,T) D^{\eta} \nonumber \\
		&\geq& 2 \int_0^T B^{\eta,b_2} (\eta_t^{b_2})^2 \cdot l(b_2,t) dt - \int_0^T \eta_t^{b_2} dt.
	\end{IEEEeqnarray*}
	After rearranging terms, we obtain:
	\begin{equation}
		b_2 \int_0^T B^{\eta, b_2} ( \eta_t^{b_2} )^2 \cdot \left( \int_0^t e^{-2B^{\eta,b_2} \int_{s}^{t} \eta^{b_2}_u du} ds \right) dt  \leq  \int_0^T b_2 \eta_t^{b_2} dt.
	\label{eq:l(t)_transformation_b2_infinity}
	\end{equation}
	
	First, consider $J^{b_2}_1$. From equation (\ref{eq:v_t_new_notation}) and inequalities (\ref{eq:eta_t_uniform_lower_bound_b2_infinity}) and (\ref{eq:l(t)_upper_bound_b2_infinity}), we have that for all $b_2 \geq b_2^{*,basic}$ and for all $t \in [0,T]$:
	\begin{IEEEeqnarray*}{rCl}
		b_2 v_t &= & Var(\xi) e^{2b_1 t} \cdot b_2 e^{- 2 B^{\eta,b_2} \int_0^t \eta_s ds } + \sigma^2 \cdot b_2 \int_0^t e^{2b_1 (t-s)} \cdot e^{-2B^{\eta,b_2} \int_s^t \eta_u du} ds \nonumber \\	
		&\leq& Var(\xi) e^{2b_1 T}\cdot b_2 e^{- \frac{2 b_2}{r + \bar{r}} m_\eta \cdot t} + \sigma^2 e^{2 b_1 T} b_2 \cdot l(t)  \nonumber \\
		&\leq& e^{2b_1 T} \left( Var(\xi) b_2 e^{-\kappa_3 b_2 t} + \frac{\sigma^2}{\kappa_3} \right).
	\end{IEEEeqnarray*}
	Let $b^{*,J_1} \geq b^{*,basic}$ such that
	for all $b_2 \geq b_2^{*,J_1}$:
	$$ (b_2 + \bar{b}_2) w_0^{b_2} \leq c_w + \frac{1}{2} c_w, \quad \text{and} \quad b_2 e^{- \kappa_3 b_2 T} \leq \frac{1}{\kappa_3}.$$
	Then for all $b_2 \geq b_2^{*,J_1}$:
	\begin{IEEEeqnarray}{rCl}
		J^{b_2}_1 &\leq& \frac{q+\bar{q}}{2}  \cdot e^{2b_1 T} \int_0^T \left( Var(\xi) b_2 e^{-\kappa_3 b_2 t} + \frac{\sigma^2}{\kappa_3} \right) dt  \nonumber \\
		&& \quad + \frac{q_T+\bar{q}_T}{2} \cdot e^{2b_1 T} \left( Var(\xi) b_2 e^{-\kappa_3 T \cdot b_2 } + \frac{\sigma^2}{\kappa_3} \right) + \frac{b_2}{2(b_2 + \bar{b}_2)} (\mathbb{E}(\xi))^2 \cdot (b_2 + \bar{b}_2) w_0^{b_2}\nonumber \\
		&\leq & \frac{(q+\bar{q}) e^{2b_1 T} }{2}\left[ \frac{Var(\xi)}{\kappa_3}  + \frac{\sigma^2 T}{\kappa_3} \right] + \frac{(q_T+\bar{q}_T) e^{2b_1 T}}{2} \left(  \frac{Var(\xi)}{\kappa_3} + \frac{\sigma^2}{\kappa_3} \right)  + \frac{3 c_w (\mathbb{E}(\xi))^2 }{4} \nonumber \\
		& =:& M_{J_1}.
	\label{eq:J1_upper_bound_b2_infinity}
	\end{IEEEeqnarray}

	Next, consider the quantity $J^{b_2}_2$, which can be written as:
	\begin{IEEEeqnarray*}{rCl}
		J^{b_2}_2 &=& \frac{1}{2 (r + \bar{r}) }\int_0^T \left(b_2 \eta_t^{b_2} \right)^2 Var(\xi) e^{2b_1 t} \cdot b_2 \exp \left( - 2 B^{\eta,b_2} \int_0^t \eta_s ds \right) dt \nonumber \\
		&& \hspace{10em}  + \frac{\sigma^2}{2} \cdot  b_2 \int_0^T B^{\eta,b_2} (\eta_t^{b_2})^2 \left( \int_0^t e^{2b_1(t-s) - 2B^{\eta,b_2} \int_s^t \eta_u^{b_2} du} ds \right)  dt .
	\end{IEEEeqnarray*}
	We will make use of Lemma \ref{lemma:int_b2_eta_t}, which appears below, and states that there exists a $b_2^{*,l}>0$ and a constant $M_{l}$ independent of $b_2$ such that for all $b_2 \geq b_2^{*,l}$: 
	\begin{equation}
	\int_0^T b_2 \eta_t^{b_2} dt \leq M_{l}.
	\label{eq:intgeral_b2_eta_t_bound_b2_infinity}
	\end{equation}
	Since $\lim_{b_2 \to \infty} b_2 \eta_t^{b_2}= c_\eta$ for all $t \in [0, T/2]$, there exists a $b_2^{*,J_2} \geq \max\{ b_2^{*,l}, b_2^{*,basic} \}$ such that for all $b_2 \geq b_2^{*,J_2}$:
	$$ b_2 \eta_t  \leq c_\eta + \frac{1}{2} c_\eta, \quad \forall \  t \in [0,T/2],  \quad \text{and} \quad b_2^{3} e^{-\kappa_3 b_2 \frac{T}{2} } \leq \frac{1}{\kappa_3}.$$
	Thus, together with inequalities (\ref{eq:eta_t_uniform_lower_bound_b2_infinity}), (\ref{eq:ut_wt_upper_bound_b2_infty}), (\ref{eq:l(t)_transformation_b2_infinity}), and (\ref{eq:intgeral_b2_eta_t_bound_b2_infinity}), and for $\kappa_4 := \frac{Var(\xi) e^{2b_1 T} }{2 (r+\bar{r})}$, we deduce that:
	\begin{IEEEeqnarray}{rCl}
		J^{b_2}_2 &\leq& \frac{Var(\xi) e^{2b_1 T} }{2 (r+\bar{r})} \left[ \int_0^{\frac{T}{2}} \left(b_2 \eta^{b_2}_t \right)^2 b_2 e^{- \kappa_3 b_2 t} dt  +  \int_{\frac{T}{2}}^{T} (\eta_t^{b_2})^2 b_2^3  e^{ - 2 B^{\eta, b_2} \int_0^{t} \eta_s^{b_2} ds } dt \right]  + \frac{\sigma^2  e^{2 b_1 T}}{2} \int_0^T b_2 \eta_t^{b_2} dt  \nonumber \\
		&\leq& 	\kappa_4 \left[ \left(\frac{3 c_\eta}{2} \right)^2 \frac{1}{\kappa_3}(1- e^{-\kappa_3 b_2 \frac{T}{2}}) + M^2 \cdot b_2^3 e^{- \kappa_3 b_2 \frac{T}{2}} \cdot \frac{T}{2} \right] + \frac{\sigma^2  e^{2 b_1 T}}{2} M_{l} \nonumber \\
		&\leq& \frac{\kappa_4}{\kappa_3} \left( \frac{9}{4} c_\eta^2 + \frac{M^2 T}{2} \right) + \frac{\sigma^2  e^{2 b_1 T}  M_{l}}{2} =: M_{J_2}.
	\label{eq:J2_upper_bound_b2_infinity}
	\end{IEEEeqnarray} 
	Now, from equation (\ref{eq:J1_plus_J2_b2_infinity}) and inequalities (\ref{eq:J1_upper_bound_b2_infinity}) and (\ref{eq:J2_upper_bound_b2_infinity}), we have for all $b_2 \geq b_2^{*,den} := \max \{ b_2^{*,J_1}, b_2^{*,J_2} \}$:
	\begin{equation} 
		b_2 SC^{MKV, b_2}  = J^{b_2}_1 + J^{b_2}_2 \leq M_{J_1} + M_{J_2} =: M_{den}.
	\label{eq:demumerator_ineq_b2_infinity}
	\end{equation}

	Finally, putting together inequalities (\ref{eq:numerator_ineq_b2_infinity}) and (\ref{eq:demumerator_ineq_b2_infinity}) and for $b_2^{*,case 2} := \max \{ \bar{b}_2,  b_2^{*,num}, b_2^{*,den} \}$, we have for all $b_2 \geq b_2^{*,case 2}$:
	\begin{equation*}
		\frac{b_2+\bar{b}_2}{b_2} \leq 2, \quad \text{and thus,} \quad \frac{\Delta SC^{b_2}}{SC^{MKV,b_2}} = \frac{(b_2 + \bar{b}_2) \Delta SC^{b_2} }{\left(\frac{b_2 + \bar{b}_2}{b_2} \right) (b_2 SC^{MKV,b_2} ) } \geq \frac{c_{num}}{ 2 M_{den}} > 0.
	\end{equation*}
	Therefore, from equation (\ref{eq:PoA_LQ}), we conclude: 
	$$ \lim_{b_2 \to \infty} PoA^{b_2} > 1.$$ 
\end{proof}

The following lemma was used in Proposition \ref{prop:b2_to_infty}.
\begin{lemma}
	Assume Assumption \ref{assumption}. There exists a $b_2^{*,l}>0$ and a $M^l>0$, such that for all $b_2 \geq b_2^{*,l}$:
	$$\int_0^T b_2 \eta^{b_2}_t dt \leq M^l.$$
	\label{lemma:int_b2_eta_t}
\end{lemma}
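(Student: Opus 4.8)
The plan is to sidestep the explicit formula (\ref{eq:riccati_ut_sol}) for $\eta_t^{b_2}$ entirely and instead read the bound off an integrated form of the scalar Riccati equation (\ref{eq:riccati_etat}), combined with a single application of Cauchy--Schwarz. Under Assumption \ref{assumption} this equation has a unique $C^1$ solution $\eta_t^{b_2}$ on $[0,T]$, and, exactly as in the discussion following equation (\ref{eq:riccati_ut_sol}), $\eta_t^{b_2}\ge 0$ for every $t\in[0,T]$; hence $I_1:=\int_0^T\eta_t^{b_2}\,dt$ and $I_2:=\int_0^T(\eta_t^{b_2})^2\,dt$ are finite and nonnegative, and the target quantity is precisely $\int_0^T b_2\,\eta_t^{b_2}\,dt=b_2 I_1$.

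First I would integrate the identity $\dot\eta_t^{b_2}=2A^\eta\eta_t^{b_2}+B^\eta(\eta_t^{b_2})^2-C^\eta$ over $[0,T]$. Using $A^\eta=-b_1$, $C^\eta=q+\bar{q}$, $B^\eta=b_2^2/(r+\bar{r})$ and $\eta_T^{b_2}=D^\eta=q_T+\bar{q}_T$, this gives
\begin{equation*}
\frac{b_2^2}{r+\bar{r}}\,I_2 = D^\eta-\eta_0^{b_2}+2b_1 I_1+(q+\bar{q})T\ \le\ D^\eta+2b_1 I_1+(q+\bar{q})T,
\end{equation*}
where the inequality just drops the nonpositive term $-\eta_0^{b_2}$. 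Next I would feed in the Cauchy--Schwarz bound $I_1^2\le T\,I_2$ to obtain the quadratic inequality in $I_1$
\begin{equation*}
I_1^2-2b_1 K\,I_1-K\bigl(D^\eta+(q+\bar{q})T\bigr)\le 0,\qquad K:=\frac{T(r+\bar{r})}{b_2^2},
\end{equation*}
which, together with $I_1\ge 0$, forces $I_1\le b_1 K+\sqrt{\,b_1^2 K^2+K\bigl(D^\eta+(q+\bar{q})T\bigr)\,}$.

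Finally I would multiply through by $b_2$ and simplify using $b_2^2 K=T(r+\bar{r})$, so that
\begin{equation*}
\int_0^T b_2\,\eta_t^{b_2}\,dt=b_2 I_1\ \le\ \frac{b_1 T(r+\bar{r})}{b_2}+\sqrt{\,\frac{b_1^2 T^2(r+\bar{r})^2}{b_2^2}+T(r+\bar{r})\bigl(D^\eta+(q+\bar{q})T\bigr)\,},
\end{equation*}
and for $b_2\ge b_2^{*,l}:=1$ the right-hand side is dominated by the $b_2$-independent constant $M^l:=b_1 T(r+\bar{r})+\sqrt{\,b_1^2 T^2(r+\bar{r})^2+T(r+\bar{r})(D^\eta+(q+\bar{q})T)\,}>0$, which proves the lemma.

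There is no genuine technical obstacle in this argument; the only point worth stressing is why a naive approach fails. One cannot simply pass to the pointwise limit $b_2\eta_t^{b_2}\to c_\eta$ and invoke dominated convergence, because near $t=T$ one has $b_2\eta_t^{b_2}\approx b_2 D^\eta\to\infty$ on a boundary layer of width of order $(r+\bar{r})/b_2^2$, so the integrand is not uniformly dominated; the mass of $b_2\eta_t^{b_2}$ there stays bounded only because of the exact cancellation between the blow-up of $\eta_t^{b_2}$ and the vanishing of $1/B^\eta$. The integrated Riccati identity records that cancellation automatically, and Cauchy--Schwarz converts the $L^2$-control it furnishes (through the $B^\eta\int_0^T(\eta_t^{b_2})^2\,dt$ term) into the desired $L^1$-bound on $b_2\eta_t^{b_2}$.
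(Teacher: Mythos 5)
Your proof is correct, and it takes a genuinely different route from the paper's. The paper works from the explicit closed-form solution of the Riccati equation: it uses monotonicity of $t \mapsto \eta_t^{b_2}$ for large $b_2$ and the intermediate value theorem to locate a crossing time $t^*_{b_2}$ at which $b_2\eta^{b_2}_{t^*_{b_2}} = c_\eta + 1$, then manipulates the explicit formula to show that the boundary layer $[t^*_{b_2},T]$ on which $b_2\eta^{b_2}_t$ can be large has width $O(1/b_2)$ (specifically $b_2(T-t^*_{b_2}) \leq \frac{1}{c_{\delta_\eta}}\ln(5c_\eta+2)$ for large $b_2$), and finally splits the integral over $[0,t^*_{b_2}]$ and $[t^*_{b_2},T]$. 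You instead integrate the Riccati ODE (\ref{eq:riccati_etat}) over $[0,T]$, use only $\eta^{b_2}_0 \geq 0$ and the terminal condition $\eta_T^{b_2}=D^\eta$ to bound $B^{\eta}\int_0^T(\eta^{b_2}_t)^2\,dt$ in terms of $\int_0^T\eta^{b_2}_t\,dt$, and close the loop with Cauchy--Schwarz, yielding a quadratic inequality in $I_1=\int_0^T\eta^{b_2}_t\,dt$ whose larger root gives an explicit bound; I checked the algebra (the scaling $Kb_2^2=T(r+\bar r)$ and the choice $b_2^{*,l}=1$) and it is sound, with the needed nonnegativity of $\eta^{b_2}_t$ guaranteed under Assumption \ref{assumption} exactly as noted after equation (\ref{eq:riccati_ut_sol}). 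Your argument is shorter, avoids the explicit solution formula and the cascade of thresholds $b_2^{*,1},\dots,b_2^{*,5}$, and produces a fully explicit constant $M^l$ valid for all $b_2\geq 1$; the paper's argument is longer but exhibits where the mass of $b_2\eta^{b_2}_t$ actually concentrates (the terminal boundary layer), which is extra structural information not required for the lemma. Your closing remark about why dominated convergence fails is also accurate and is precisely the difficulty the paper's $t^*_{b_2}$ construction is designed to handle.
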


\begin{proof}
	Since $\lim_{b_2 \to \infty}b_2\eta^{b_2}_t= c_\eta,$ there exists a $b_2^{*,1}>0$ such that for $b_2 \geq b_2^{*,1}$, we have
	$b_2\eta^{b_2}_0 \leq c_\eta+\frac{1}{2}.$
	Clearly there exists a $b_2^{*,2}>0$ such that for $b_2 \geq b_2^{*,2}$:
	$$b_2\eta^{b_2}_T=b_2D^\eta=b_2(q_T+\bar{q}_T)\geq c_\eta+2.$$
	Let $b_2^{*,3}$ such that for $b_2 \geq b_2^{*,3}$, the function $t \mapsto b_2 \eta^{b_2}_t$ is increasing. Since for $b_2 \geq \max \{b_2^{*,1},b_2^{*,2},b_2^{*,3} \}$, the function $t \mapsto b_2 \eta^{b_2}_t$ is increasing and continuous, by the intermediate value theorem, there exists a $t^*_{b_2}\in[0,T]$ such that
	$b_2 \eta^{b_2}_{t^*_{b_2}} =c_\eta+1$, and $b_2 \eta^{b_2}_t \leq c_\eta+1$, for all $t\leq t^*_{b_2}.$
	From an analogous equation to (\ref{eq:riccati_ut_sol}) for $\eta^{b_2}_{t^*_{b_2}}$, we have for $b_2 \geq \max \{b_2^{*,1},b_2^{*,2},b_2^{*,3} \}$:
	$$
	1=b_2\eta^{b_2}_{t^*_{b_2}}-c_\eta\leq \frac{ \frac{C^\eta}{b_2}+D^\eta\frac{\delta^{+,b_2}_\eta}{b_2}-\frac{D^\eta}{r+\bar{r}}c_\eta +\frac{\delta^{-,b_2}_\eta}{b_2^2}c_\eta+\left(-D^\eta\frac{\delta^{-,b_2}_\eta}{b_2} +\frac{D^\eta}{r+\bar{r}}c_\eta\right)e^{-(\delta^{+,b_2}_\eta-\delta^{-,b_2}_\eta)(T-t^*_{b_2})} }{ \frac{D^\eta}{r+\bar{r}} -\frac{\delta^{-,b_2}_\eta}{b_2^2} -\frac{D^\eta}{r+\bar{r}} e^{-(\delta^{+,b_2}_\eta-\delta^{-,b_2}_\eta)(T-t^*_{b_2})}}.
	$$
	After rearranging terms:
	\begin{equation}
	\begin{split}
	\frac{D^\eta}{r+\bar{r}}-\left[\frac{\delta^{-,b_2}_\eta}{b_2^2}(1+c_\eta)+\frac{C^\eta}{b_2}+D^\eta \left(\frac{\delta^{+,b_2}_\eta}{b_2}-c_{\delta_\eta} \right)\right]\\
	\leq  \left(-D^\eta\frac{\delta^{-,b_2}_\eta }{b_2}+\frac{D^\eta}{r+\bar{r}}(1+c_\eta) \right) e^{-\left(\delta^{+,b_2}_\eta -\delta^{-,b_2}_\eta\right)(T-t^*_{b_2})}.
	\end{split}
	\label{eq:lemma_1}
	\end{equation}
	Since we have
	$\lim_{b_2 \to \infty}\frac{\delta^{-,b_2}_\eta}{b_2^2}=\lim_{b_2 \to \infty}\frac{C^\eta}{b_2}=\lim_{b_2 \to \infty}\left(\frac{\delta^{+,b_2}_\eta}{b_2}-c_{\delta_\eta} \right) =0,$ there exists a $b_2^{*,4}>0$ such that for $b_2 \geq b_2^{*,4}$,
	$\left[\frac{\delta^{-,b_2}_\eta}{b_2^2}(1+c_\eta)+\frac{C^\eta}{b_2}+D^\eta \left(\frac{\delta^{+,b_2}_\eta}{b_2}-c_{\delta_\eta} \right)\right] \leq \frac{D^\eta}{2(r+\bar{r})},$
	and thus returning to inequality (\ref{eq:lemma_1}), for $b_2 \geq \max \{b_2^{*,1},b_2^{*,2},b_2^{*,3},b_2^{*,4} \}$:
	$$\frac{D^\eta}{2(r+\bar{r})}\leq \left(-D^\eta\frac{\delta^{-,b_2}_\eta }{b_2}+\frac{D^\eta}{r+\bar{r}}(1+c_\eta) \right) e^{-\left(\delta^{+,b_2}_\eta -\delta^{-,b_2}_\eta\right)(T-t^*_{b_2})}.$$
	After rearranging terms:
	\begin{equation}
	b_2(T-t^*_{b_2})\leq \frac{1}{\frac{\left(\delta^{+,b_2}_\eta -\delta^{-,b_2}_\eta\right)}{b_2}} \ln \left( -\frac{2\delta^{-,b_2}_\eta(r+\bar{r})}{b_2}+2(1+c_\eta) \right).
	\label{eq:lemma_2}
	\end{equation}
	Since $\lim_{b_2 \to \infty}\frac{\delta^{+,b_2}_\eta}{b_2}=c_{\delta_\eta}=-\lim_{b_2 \to \infty}\frac{\delta^{-,b_2}_\eta}{b_2}$, there exists a $b_2^{*,5}>0$ such that for $b_2 \geq b_2^{*,5}$:
	$$\frac{c_{\delta_\eta}}{2} \leq \frac{-\delta^{-,b_2}_\eta}{b_2} \leq \frac{3c_{\delta_\eta}}{2}, \quad \text{and} \quad c_{\delta_\eta} \leq \frac{\delta^{+,b_2}_\eta-\delta^{-,b_2}_\eta}{b_2} \leq 3c_{\delta_\eta}.$$	
	Returning to inequality (\ref{eq:lemma_2}), for $b_2 \geq b_2^{*,l}:=\max \{b_2^{*,1},b_2^{*,2},b_2^{*,3},b_2^{*,4},b_2^{*,5}\}$:
	$$ b_2(T-t^*_{b_2})\leq \frac{1}{c_{\delta_\eta}} \ln \left(3 (r+\bar{r})c_{\delta_\eta}+2(1+c_\eta) \right)=\frac{1}{c_{\delta_\eta}} \ln \left(5c_\eta+2 \right).$$
	Finally, for $b_2 \geq b_2^{*,l}$:
	\begin{equation*}
	\begin{split}
	\int_0^T b_2 \eta^{b_2}_t dt=\int_0^{t^*_{b_2}} b_2 \eta^{b_2}_t dt+\int_{t^*_{b_2}}^T b_2 \eta^{b_2}_t dt \leq t^*_{b_2}(c_\eta+1)+b_2 (T-t^*_{b_2})D^\eta \\
	\leq T(c_\eta+1)+\frac{D^\eta}{c_{\delta_\eta}} \ln \left( 5c_\eta+2 \right)=:M^l.
	\end{split}
	\end{equation*}
\end{proof}

\begin{proposition}
	Assume Assumption \ref{assumption}. If $\bar{b}_2= 0$, then:
	\begin{equation*}
	\lim_{b_2 \to 0} PoA = 1,
	\end{equation*}
	whereas if $\bar{b}_2 > 0$, then:
	\begin{equation*}
	\lim_{b_2 \to 0} PoA > 1.
	\end{equation*}
\label{prop:b2_to_0}
\end{proposition}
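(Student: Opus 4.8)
The plan is to follow the template of Propositions~\ref{prop:r_bar_r} and~\ref{prop:b2_to_infty}. As $b_2\to 0$ the only coefficients in \eqref{eq:ABCD} that move are $\lambda$, $B=B^u=B^w$, $B^\eta$, $C^u$ and $D^u$; I would first pin down their limits, then use the explicit formula \eqref{eq:riccati_ut_sol} (and its analogues for $w_t$ and $\eta_t$) to get, uniformly on $[0,T]$, boundedness and convergence of the Riccati solutions $u_t^{b_2}$, $w_t^{b_2}$, $\eta_t^{b_2}$; then pass to the limit in the representations \eqref{eq:Delta_SC_LQ_new_notation}--\eqref{eq:v_t_new_notation} by bounded convergence; and finally conclude from $PoA=1+\Delta SC/SC^{MKV}$ in \eqref{eq:PoA_LQ}. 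Note that $b_1>0$ (Assumption~\ref{assumption}) makes $A^u,A^w,A^\eta<0$, which is exactly what keeps the denominator in \eqref{eq:riccati_ut_sol} bounded below away from zero uniformly for small $b_2$, so the uniform bounds needed for bounded convergence are available; these estimates are routine and parallel those already carried out in the two preceding proofs.

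\textbf{Case $\bar{b}_2=0$.} Here $\lambda$ is constant in $b_2$, while $B=b_2^2/(r+\bar{r}(1-\bar{s})^2)\to 0$ and $B^\eta=b_2^2/(r+\bar{r})\to 0$. From \eqref{eq:riccati_ut_sol} and its analogues, $u_t^{b_2}$, $w_t^{b_2}$, $\eta_t^{b_2}$ converge uniformly on $[0,T]$ to the solutions of the corresponding \emph{linear} first-order ODEs obtained by dropping the quadratic terms, and from \eqref{eq:x_bar_MFG_explicit_new_notation}--\eqref{eq:v_t_new_notation} one gets $\bar{x}_t^{MFG}\to\mathbb{E}(\xi)e^{(b_1+\bar{b}_1)t}$ and $v_t\to Var(\xi)e^{2b_1t}+\sigma^2\int_0^t e^{2b_1(t-s)}\,ds>0$. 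Since $(u_t^{b_2}-w_t^{b_2})^2(\bar{x}_t^{MFG})^2$ is then uniformly bounded, \eqref{eq:Delta_SC_LQ_new_notation} gives $\Delta SC=\tfrac12 B\int_0^T(\cdots)\,dt\to 0$, while \eqref{eq:SC_MKV_LQ_new_notation} gives a strictly positive limit for $SC^{MKV}$ (because $q+\bar{q}>0$, the limiting $v_t>0$, and the limiting $w_0>0$, the latter since $C^w=q+\bar{q}(1-s)^2>0$ and $D^w>0$ make the limiting Riccati solution positive). Hence $\lim_{b_2\to 0}PoA=1$; this is essentially the $r\to\infty$ argument of Proposition~\ref{prop:r_bar_r}.

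\textbf{Case $\bar{b}_2>0$.} Now $B\to B_0:=\bar{b}_2^2/(r+\bar{r}(1-\bar{s})^2)>0$ and $B^\eta\to 0$, but the crucial new feature is that $\lambda=\frac{b_2}{b_2+\bar{b}_2}\cdot\frac{r+\bar{r}(1-\bar{s})^2}{r+\bar{r}(1-\bar{s})}\to 0$, so $C^u=\lambda(q+\bar{q}(1-s))\to 0$ and $D^u=\lambda(q_T+\bar{q}_T(1-s_T))\to 0$ while $C^w,D^w,C^\eta,D^\eta$ are unchanged. Then $BC^u\to 0$, so $\delta_u^\pm\to -A^u\pm|A^u|$, i.e.\ $\delta_u^+\to -2A^u>0$ and $\delta_u^-\to 0$; in \eqref{eq:riccati_ut_sol} the denominator stays bounded below away from zero while the numerator tends to $0$, uniformly on $[0,T]$, so $u_t^{b_2}\to 0$ uniformly (consistent with $u\equiv 0$ being the unique solution of the limiting Riccati equation $\dot u-2A^u u-B_0u^2=0$, $u_T=0$). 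Meanwhile $w_t^{b_2}\to w_t^{0}$, the solution of $\dot w-2A^w w-B_0w^2+C^w=0$, $w_T=D^w$, which is strictly positive on $[0,T]$ since $C^w=q+\bar{q}(1-s)^2>0$ and $D^w>0$; also $\eta_t^{b_2}\to\eta_t^{0}$, $B^\eta\to 0$, hence $\bar{x}_t^{MFG}\to\mathbb{E}(\xi)e^{(b_1+\bar{b}_1)t}\neq 0$ (using $Bu_t^{b_2}\to 0$ uniformly) and $v_t\to v_t^{0}:=Var(\xi)e^{2b_1t}+\sigma^2\int_0^t e^{2b_1(t-s)}\,ds>0$. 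Passing to the limit by bounded convergence in \eqref{eq:Delta_SC_LQ_new_notation} and \eqref{eq:SC_MKV_LQ_new_notation},
\begin{equation*}
\Delta SC\ \longrightarrow\ \frac{B_0}{2}(\mathbb{E}(\xi))^2\int_0^T (w_t^{0})^2 e^{2(b_1+\bar{b}_1)t}\,dt\ >\ 0,
\qquad
SC^{MKV}\ \longrightarrow\ \frac12\int_0^T (q+\bar{q})v_t^{0}\,dt+\frac{q_T+\bar{q}_T}{2}v_T^{0}+\frac12 w_0^{0}(\mathbb{E}(\xi))^2\ \in\ (0,\infty),
\end{equation*}
so $\lim_{b_2\to 0}PoA>1$, the first limit being strictly positive and the second finite.

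\textbf{Main obstacle.} No step is deep; the real work is the bookkeeping of the uniform bounds and uniform convergence of $u_t^{b_2}$, $w_t^{b_2}$, $\eta_t^{b_2}$ over the \emph{whole} interval $[0,T]$ — needed to justify the bounded-convergence passages inside the integrals for $\Delta SC$ and $SC^{MKV}$ — handled via the explicit forms \eqref{eq:riccati_ut_sol} and the monotonicity criterion \eqref{eq:riccati_ut_sol_deriv} exactly as in Propositions~\ref{prop:r_bar_r} and~\ref{prop:b2_to_infty}. The one genuinely new observation, which separates the two cases, is that when $\bar{b}_2>0$ the vanishing of $\lambda$ forces both $C^u$ and $D^u$ to $0$, so that $u_t^{b_2}\to 0$ even though $B$ does \emph{not} vanish; this is precisely what prevents $\Delta SC$ from collapsing and yields a price of anarchy strictly above $1$ in the limit.
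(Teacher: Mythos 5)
Your proposal is correct and follows essentially the same route as the paper: in the case $\bar{b}_2=0$ both arguments reduce to the $r\to\infty$ technique of Proposition~\ref{prop:r_bar_r} (with $B\to 0$ killing $\Delta SC$), and in the case $\bar{b}_2>0$ both exploit that $\lambda\to 0$ forces $C^{u},D^{u}\to 0$ and hence $u_t^{b_2}\to 0$ while $B$ stays bounded away from zero, yielding the same strictly positive limit $\tfrac12 B^{b_2\to 0}\int_0^T (w_t^{b_2\to 0}\,\bar{x}_t^{MFG,b_2\to 0})^2\,dt$ for $\Delta SC$ and a positive finite limit for $SC^{MKV}$. The uniform bounds and bounded-convergence passages you defer to are exactly the ones the paper carries out.
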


\begin{proof}
	\textbf{Case 1:} First, consider the case $\bar{b}_2 =0$. As $b_2 \to 0$, we have:
	\begin{equation*}
		B^{b_2} \to 0, \quad B^{\eta,b_2} \to 0,
	\end{equation*}
	and $\lambda = \frac{r + \bar{r}(1- \bar{s})^2}{r + \bar{r}(1-\bar{s})}$, $(A^{u},C^{u}, D^{u}), (A^{w}, C^{w}, D^{w}), (A^\eta, C^\eta, D^\eta)$ are all independent of $b_2$. We can then use the same technique shown in Proposition \ref{prop:r_bar_r} to conclude that $\lim_{b_2 \to 0} PoA = 1.$\\
	
	\textbf{Case 2:} Now, let's assume $\bar{b}_2 > 0$.	As $b_2 \to 0$, we have:
	\begin{equation*}
	\begin{array}{l}
		\lambda \longrightarrow 0, \quad \displaystyle B^{b_2} \longrightarrow{}  \frac{\bar{b}_2^2}{r + \bar{r}(1-\bar{s})^2}=:B^{b_2 \to 0} >0, \qquad B^{\eta,b_2} \longrightarrow{} 0, \quad C^{u,b_2} \longrightarrow{} 0, \  D^{u,b_2} \longrightarrow{} 0,
	\end{array}
	\end{equation*}
	and $A^u, (A^w, C^w, D^w), (A^\eta, C^\eta, D^\eta)$ are independent of $b_2$.
	Moreover, we have:
	$$\lim_{b_2 \to 0}\delta^{+,b_2}_u =-2A^u >0, \quad \text{and} \quad \lim_{b_2 \to 0}\delta^{-,b_2}_u = 0.$$
	Thus, from equation (\ref{eq:riccati_ut_sol}) we deduce that for every fixed time $t \in [0,T]$, $\lim_{b_2 \to 0}u^{b_2}_t = 0.$
	
	Similar to Proposition \ref{prop:r_bar_r}, we can derive a uniform bound for $u_t^{b_2}$ over $[0,T]$ for small $b_2$. Indeed, for any fixed $\epsilon >0$ there exists a $b_2^* >0$ such that for any $b_2 \leq b_2^*$:
	$$\max \left\{B^{b_2}, C^{u,b_2}, D^{u,b_2} \right\} \leq B^{b_2 \to \infty} + \epsilon =: \zeta, \quad \text{and thus,} \quad
	\vert u_t^{b_2} \vert \leq \frac{\zeta + 2 \epsilon \sqrt{(A^u)^2 + \zeta^2} }{- 2 A^u e^{-2T \sqrt{(A^u)^2 + \zeta^2} } } ,\ \forall  \  t \in [0,T].$$
	
	From equation (\ref{eq:x_bar_MFG_explicit_new_notation}), the assumption $\mathbb{E}(\xi)\neq 0$, and by the bounded convergence theorem, we derive that for any fixed $t \in [0,T]$:
	$$\lim_{b_2 \to 0} \bar{x}_t^{MFG,b_2} = \mathbb{E}(\xi) e^{(b_1+\bar{b}_1)t} =: \bar{x}_t^{MFG,b_2 \to 0} \neq 0.$$
	It can also be shown that $\left| \bar{x}_t^{MFG,b_2}\right| \leq \vert \mathbb{E}(\xi) \vert e^{(b_1 + \bar{b}_1) T}$ for any $t \in [0,T]$ and $b_2 > 0$.

	Moreover, since $B^{b_2 \to 0}>0$, $B^{b_2 \to 0}C^w >0$, $B^{b_2\to 0}D^w > 0$, we have $\lim_{b_2 \to 0} w_t^{b_2}=:w_t^{b_2 \to 0}$, and $w_t^{b_2 \to 0}$ is strictly positive over $[0,T)$. It is easy to check that $w_t^{b_2}$ is also uniformly bounded over $[0,T]$ for small $b_2$. Hence, from equation (\ref{eq:Delta_SC_LQ_new_notation}) and the bounded convergence theorem, we deduce:
	\begin{equation*}
	\lim_{b_2 \to 0} \Delta SC^{b_2} = \frac{1}{2} B^{b_2\to 0} \int_0^T (w_t^{b_2 \to 0} \cdot \bar{x}_t^{MFG,b_2 \to 0})^2 dt >0.
	\end{equation*}

	Since $B^{\eta,b_2} \to 0, A^{\eta} < 0, C^{\eta} >0$, and $D^{\eta} > 0$, using the same argument shown in Proposition \ref{prop:r_bar_r}, 
	we deduce that $\eta_t^{b_2}$ is uniformly bounded over $[0,T]$ for small $b_2$ and for all $t \in[0,T]$:
	$$\lim_{b_2 \to 0}\eta_t^{b_2}=\left(D^{\eta} - \frac{C^{\eta}}{2A^\eta}\right) e^{-2A^\eta(T-t)} + \frac{C^\eta}{2A^{\eta} }=:\eta_t^{b_2 \to 0}.$$
	
	From equation (\ref{eq:v_t_new_notation}) and the bounded convergence theorem, for all $t \in [0,T]$:
	$$ \lim_{b_2 \to 0}v_t^{b_2}=Var(\xi) e^{2b_1 t} + \sigma^2 \int_0^t e^{2b_1 (t-s)} ds=:v_t^{b_2 \to 0}>0,$$
	and thus, $ 0 < \lim_{b_2 \to 0} SC^{MKV,b_2} < \infty.$ We conclude $\lim_{b_2 \to 0} PoA^{b_2} > 1.$
\end{proof}

\begin{proposition}
	Assuming Assumption \ref{assumption}, then:
	$$ \qquad\lim_{\bar{b}_2 \to \infty} PoA = 1.$$
	Furthermore, if	$\frac{r + \bar{r}(1- \bar{s})^2}{r + \bar{r}(1-\bar{s})} \neq \frac{q_T+\bar{q}_T(1-s_T)^2}{q_T+\bar{q}_T(1-s_T)}$ then:
	\begin{equation*}
	\lim_{\bar{b}_2 \to 0} PoA > 1. 
	\end{equation*}	
	\label{prop:b2bar}
\end{proposition}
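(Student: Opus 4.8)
The plan is to treat the two limits separately, leaning on the closed-form Riccati solutions. Throughout I would write $\Lambda := \dfrac{r+\bar{r}(1-\bar{s})^2}{r+\bar{r}(1-\bar{s})}$, so that $\lambda = \dfrac{b_2}{b_2+\bar{b}_2}\,\Lambda$, and $K_1 := q+\bar{q}(1-s)$, $K_2 := q_T+\bar{q}_T(1-s_T)$; under Assumption \ref{assumption} the quantities $\Lambda, K_1, K_2$ and $D^w = q_T+\bar{q}_T(1-s_T)^2$ are strictly positive. The starting observation is that, among the coefficients in (\ref{eq:ABCD}), $A^\eta,B^\eta,C^\eta,D^\eta$ as well as $A^u,A^w,C^w,D^w$ are all independent of $\bar{b}_2$, so $\eta_t$ (from (\ref{eq:riccati_etat})) and $v_t$ (from (\ref{eq:v_t_new_notation})) do not depend on $\bar{b}_2$, whereas $u_t$ and $w_t$ depend on $\bar{b}_2$ only through $B = B^u = B^w = \dfrac{(b_2+\bar{b}_2)^2}{r+\bar{r}(1-\bar{s})^2}$ and through $C^u = \lambda K_1$, $D^u = \lambda K_2$. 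In both limits I would use $PoA = 1+\Delta SC/SC^{MKV}$ from (\ref{eq:PoA_LQ}), combined with (\ref{eq:Delta_SC_LQ_new_notation}) and (\ref{eq:x_bar_MFG_explicit_new_notation}) rewritten as
\[
\Delta SC = \frac{(\mathbb{E}(\xi))^2}{2}\int_0^T B\,(u_t-w_t)^2\, e^{2(b_1+\bar{b}_1)t}\, e^{-2\int_0^t Bu_s\, ds}\,dt .
\]

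For $\bar{b}_2\to\infty$ I would first record the asymptotics: $B\to\infty$, $\lambda\to 0$, $C^u,D^u\to 0$, but $B(D^u)^2 = \dfrac{b_2^2\Lambda^2K_2^2}{r+\bar{r}(1-\bar{s})^2}$ is constant in $\bar{b}_2$ and $BC^u = \dfrac{(b_2+\bar{b}_2)b_2K_1}{r+\bar{r}(1-\bar{s})}\to\infty$. From (\ref{eq:riccati_ut_sol_deriv}), since $B(D^u)^2+2A^uD^u-C^u\to B(D^u)^2>0$, the map $t\mapsto u_t$ is increasing for all large $\bar{b}_2$; a routine estimate of (\ref{eq:riccati_ut_sol}) at $t=0$ then gives $Bu_0\ge c\sqrt{\bar{b}_2}$ for some $c>0$, so $Bu_0\to\infty$ and $Be^{-TBu_0}\to 0$. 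To control the factor I would bound $B(u_t-w_t)^2\le 2Bu_t^2+2Bw_t^2$: monotonicity gives $u_t\le D^u$, hence $Bu_t^2\le B(D^u)^2$ is bounded uniformly in $\bar{b}_2$; likewise $w$ is increasing for large $\bar{b}_2$ (since $B(D^w)^2+2A^wD^w-C^w\to\infty$), so $w_t\le w_{T/2}$ on $[0,T/2]$, and a direct computation from the explicit solution shows $\sqrt{B}\,w_{T/2}$ is bounded uniformly in $\bar{b}_2$, giving $B(u_t-w_t)^2\le C_1$ on $[0,T/2]$ for a constant $C_1$; while on $[T/2,T]$ one has, for large $\bar{b}_2$, $u_t,w_t\in[0,D^w+1]$, so $B(u_t-w_t)^2\le B(D^w+1)^2$. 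Splitting $\Delta SC$ at $T/2$ and using $\int_0^t Bu_s\,ds\ge tBu_0$: the $[0,T/2]$ contribution is $\le C_1 e^{2(b_1+\bar{b}_1)T}/(2Bu_0)\to 0$, and the $[T/2,T]$ contribution is $\le \tfrac{T}{2}B(D^w+1)^2 e^{2(b_1+\bar{b}_1)T}e^{-TBu_0}\to 0$; hence $\Delta SC\to 0$. Since $v_t,\eta_t$ do not depend on $\bar{b}_2$ and $w_0\le w_{T/2}\to 0$, (\ref{eq:SC_MKV_LQ_new_notation}) gives $SC^{MKV}\to \tfrac12\int_0^T[q+\bar{q}+B^\eta\eta_t^2]v_t\,dt + \tfrac{q_T+\bar{q}_T}{2}v_T>0$ (as $v_t>0$ for $t>0$ since $\sigma>0$), and by (\ref{eq:PoA_LQ}) I would conclude $\lim_{\bar{b}_2\to\infty}PoA=1$.

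For $\bar{b}_2\to 0$ the situation is softer: every coefficient in (\ref{eq:ABCD}) converges, $B\to B^0:=\dfrac{b_2^2}{r+\bar{r}(1-\bar{s})^2}>0$, $\lambda\to\Lambda$, $C^u\to\Lambda K_1>0$, $D^u\to\Lambda K_2\ge 0$, and the solvability conditions of (\ref{eq:riccati_ut_sol}) persist in the limit. Hence $u_t,w_t,\eta_t,v_t,\bar{x}_t^{MFG}$ converge pointwise on $[0,T]$ to the objects $u_t^0,w_t^0,\eta_t^0,v_t^0,\bar{x}_t^{MFG,0}$ of the limiting problem and, as in Proposition \ref{prop:r_bar_r}, are uniformly bounded on $[0,T]$ for $\bar{b}_2$ near $0$; bounded convergence then yields $\Delta SC\to\Delta SC^0:=\tfrac12 B^0\int_0^T(u_t^0-w_t^0)^2(\bar{x}_t^{MFG,0})^2\,dt$ and $SC^{MKV}\to SC^{MKV,0}\in(0,\infty)$. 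The hypothesis $\dfrac{r+\bar{r}(1-\bar{s})^2}{r+\bar{r}(1-\bar{s})}\ne\dfrac{q_T+\bar{q}_T(1-s_T)^2}{q_T+\bar{q}_T(1-s_T)}$ is precisely $\Lambda K_2\ne D^w$, i.e. $u_T^0\ne w_T^0$; by continuity $u_t^0\ne w_t^0$ on a left neighborhood of $T$, and $\mathbb{E}(\xi)\ne 0$ forces $\bar{x}_t^{MFG,0}\ne 0$, so $\Delta SC^0>0$, whence $\lim_{\bar{b}_2\to 0}PoA = 1+\Delta SC^0/SC^{MKV,0}>1$.

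The hard part will be the $\bar{b}_2\to\infty$ case, where $\Delta SC=\tfrac12 B\int_0^T(u_t-w_t)^2(\bar{x}_t^{MFG})^2\,dt$ is of indeterminate type $\infty\cdot 0$: the weight $(\bar{x}_t^{MFG})^2$ decays very fast (because $Bu_s\to\infty$) and concentrates its mass near $t=0$, exactly where $B(u_t-w_t)^2$ remains bounded, with the concentrated mass of order $1/(Bu_0)\to 0$. Making this rigorous requires the split at $T/2$ together with the uniform-in-$\bar{b}_2$ bounds on $Bu_t^2$, $Bw_t^2$ and the lower bound $Bu_0\gtrsim\sqrt{\bar{b}_2}$, all of which I would extract from the closed forms (\ref{eq:riccati_ut_sol}) and (\ref{eq:riccati_ut_sol_deriv}).
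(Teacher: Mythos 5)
Your proposal is correct and follows essentially the same route as the paper's proof: for $\bar{b}_2\to\infty$ the same split of $[0,T]$ at $T/2$, with the integrand $B(u_t-w_t)^2$ kept bounded near $0$ while the weight $(\bar{x}_t^{MFG})^2$ concentrates mass of order $1/(Bu_0)\sim \bar{b}_2^{-1/2}$, plus super-exponential decay on $[T/2,T]$ and convergence of $SC^{MKV}$ to a positive limit; for $\bar{b}_2\to 0$ the same bounded-convergence argument with $D^{u,\bar{b}_2\to 0}\neq D^w$ forcing $u^0\neq w^0$ on a set of positive measure. Your use of $B(u_t-w_t)^2\le 2Bu_t^2+2Bw_t^2$ together with the observation that $B(D^u)^2$ is constant in $\bar{b}_2$ is a mild streamlining of the paper's explicit tracking of $(b_2+\bar{b}_2)^{3/2}u_t$ and $(b_2+\bar{b}_2)w_t$, but the mechanism is identical.
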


\begin{proof}
	\textbf{Case 1:} When $\bar{b}_2 \to \infty$,  we have:
	\begin{equation*}
	\begin{array}{l}
	\lambda^{\bar{b}_2} \to 0, \quad B^{\bar{b}_2} \to \infty,\ C^{u,\bar{b}_2} \to 0,\ D^{u,\bar{b}_2} \to 0,
	\end{array}
	\end{equation*}
	and $A^{u}, (A^w,C^w,D^w), (A^\eta, B^\eta, C^\eta, D^\eta)$ are independent of $\bar{b}_2$.
	Following the same technique used in Proposition \ref{prop:b2_to_infty}, we can show that:
	$$ \lim_{\bar{b}_2 \to \infty} \frac{\delta^{\pm,\bar{b}_2}_u}{\sqrt{b_2 + \bar{b}_2}} =\pm \sqrt{ \frac{b_2 (q + \bar{q}(1-s))}{r + \bar{r}(1-\bar{s})} } =: \pm c_{\delta_u},
	\
	\lim_{\bar{b}_2 \to \infty} \frac{\delta^{\pm, \bar{b}_2}_w}{b_2 + \bar{b}_2} = \pm \sqrt{\frac{q+\bar{q}(1-s)^2}{r+ \bar{r}(1-\bar{s})^2}} =: \pm c_{\delta_w},$$
	and, for all $t \in [0,T)$:
	\begin{equation*}
	\begin{array}{l}
	\lim_{\bar{b}_2 \to \infty}(b_2 + \bar{b}_2)^{\frac{3}{2}} u^{\bar{b}_2}_t = (r + \bar{r}(1-\bar{s})^2) c_{\delta_u} =: c_u, \\
	\lim_{\bar{b}_2 \to \infty}(b_2 + \bar{b}_2) w_t^{\bar{b}_2} = (r + \bar{r}(1-\bar{s}^2) ) c_{\delta_w} =: c_w.
	\end{array}
	\end{equation*}
	
	Next, we provide a uniform upper bound for $u_t^{\bar{b}_2}$ over $[0,T]$ and large $\bar{b}_2$.
	
	\hspace{-7mm} Let $\zeta_1 := \frac{1}{2} \min \left\{ c_{\delta_u}, \  c_{\delta_w} \right\}$. Then there exists a $\bar{b}_2^{*,u} >0 $ such that for all $\bar{b}_2 \geq \bar{b}_2^{*,u}$,
	$$ \max\left\{ \left\vert \frac{\delta_u^{+,\bar{b}_2}}{\sqrt{b_2 + \bar{b}_2}} - c_{\delta_u} \right\vert , \ \left\vert \frac{\delta_u^{-,\bar{b}_2}}{\sqrt{b_2 + \bar{b}_2}} - (-c_{\delta_u})\right\vert,\    \left\vert \frac{C^{u,\bar{b}_2}}{\sqrt{b_2 + \bar{b}_2}} \right\vert, \   \left\vert D^{u,\bar{b}_2} \right\vert, \  \frac{1}{\sqrt{b_2 + \bar{b}_2}}  \right\} \leq \zeta_1. $$ 
	Then with equation (\ref{eq:riccati_ut_sol}), for any $t \in [0,T]$ and $\bar{b}_2 \geq \bar{b}_2^{*,u}$,
	\begin{equation}
	\left\vert u_t^{\bar{b}_2} \right\vert 
	\leq \frac{C^{u,\bar{b}_2}+ D^{u,\bar{b}_2} \left(\delta_u^{+,\bar{b}_2} - \delta_u^{-,\bar{b}_2} \right)}{- \delta_u^{-,\bar{b}_2}} \\
	\leq  \frac{\zeta_1 + \zeta_1 (2 c_{\delta_u} + 2 \zeta_1)}{ c_{\delta_u} - \zeta_1}.
	\label{eq:ut_uniform_upper_bound_b2_bar_infinity}
	\end{equation}
	By the same argument for $w_t^{b_2}$ and together with inequality (\ref{eq:ut_uniform_upper_bound_b2_bar_infinity}), there exists a $\bar{b}_2^{*,upper} \geq \bar{b}_2^{*,u}$ and $M>0$ such that:
	\begin{equation}
	\max \left\{ \left\vert u_t^{\bar{b}_2}\right\vert, \left\vert w_t^{\bar{b}_2} \right\vert \right\} \leq M, \quad \forall t \in[0,T],\ \bar{b}_2 \geq \bar{b}_2^{*,upper} .
	\label{eq:ut_wt_uniform_upper_bound_b2_bar_infty}
	\end{equation}
	
	Furthermore, we can get a uniform lower bound for $(b_2 + \bar{b}_2)^{\frac{3}{2}} u_t^{\bar{b}_2}$.
	
	\hspace{-7mm}Denote $\zeta_2: = \frac{b_2 (r + \bar{r}(1-\bar{s})^2) (q_T + \bar{q}_T(1-s_T))}{r + \bar{r}(1-s)}$. Then for all $t \in [0,T]$ and $\bar{b}_2 \geq \bar{b}_2^{*,u}$ we have:
	\begin{equation}
	\left\vert (b_2 + \bar{b}_2)^{\frac{3}{2}} u_t^{\bar{b}_2} \right\vert \geq    \frac{ (b_2 + \bar{b}_2)^{\frac{3}{2}} D^{u,\bar{b}_2} \delta_u^{+,\bar{b}_2}}{(\delta_u^{+,\bar{b}_2} - \delta_u^{-,\bar{b}_2}) + B^{\bar{b}_2} D^{u,\bar{b}_2} } 
	\geq  \frac{\zeta_2 \left(c_{\delta_u} - \zeta_1\right)}{ \zeta_1 \left(2 c_{\delta_u} + 2 \zeta_1\right) + \zeta_2 / (r+ \bar{r}(1-\bar{s})^2) } =: m_u.
	\label{eq:ut_lower_bound_b2_bar_infty}
	\end{equation}
	
	Now, we adapt the method used in Proposition \ref{prop:b2_to_infty} to prove $\lim_{\bar{b}_2 \to \infty} \Delta SC^{\bar{b}_2} = 0$. Consider the two quantities:
	$$ I^{\bar{b}_2}_1 := \frac{1}{2} B^{\bar{b}_2} \int_{0}^{\frac{T}{2}} (u^{\bar{b}_2}_s- w^{\bar{b}_2}_s)^2 (\bar{x}_s^{MFG, \bar{b}_2} )^2 ds \quad \text{ and } \quad I^{\bar{b}_2}_2: = \frac{1}{2} B^{\bar{b}_2} \int_{\frac{T}{2}}^T (u^{\bar{b}_2}_s- w^{\bar{b}_2}_s)^2 (\bar{x}_s^{MFG, \bar{b}_2} )^2 ds. $$
	
	Fix $\epsilon>0$. In the following, we will show that $\Delta SC^{\bar{b}_2} = I^{\bar{b}_2}_1 + I^{\bar{b}_2}_2 \leq 2\epsilon$ for large $\bar{b}_2$. First, consider $I^{\bar{b}_2}_1$. Let $\zeta_3 := c_u / 2$. Because $ \lim_{\bar{b}_2 \to \infty} B^{\bar{b}_2} (D^{u,\bar{b}_2})^2+ 2 A^{u} D^{u,\bar{b}_2} - C^{u,\bar{b}_2} >0$, from equation (\ref{eq:riccati_ut_sol_deriv}), there exists a $\bar{b}_2^{*,inc} \geq \bar{b}_2^{*,upper}$ so that for all $\bar{b}_2 \geq \bar{b}_2^{*,inc}$, the functions $s \mapsto u_s^{\bar{b}_2}$ and $s \mapsto w_s^{\bar{b}_2}$ are increasing, and so that for all $s \in [0, T/2]$:
	\begin{IEEEeqnarray*}{c}
		c_u - \zeta_3 \leq (b_2 + \bar{b}_2)^{\frac{3}{2}} u_0^{\bar{b}_2} \leq  (b_2 + \bar{b}_2)^{\frac{3}{2}} u_s^{\bar{b}_2} \leq (b_2 + \bar{b}_2)^{\frac{3}{2}} u_{\frac{T}{2}}^{\bar{b}_2} \leq c_u + \zeta_3,\\
		\left\vert (b_2 + \bar{b}_2) u_s^{\bar{b}_2} \right\vert \leq \left\vert (b_2 + \bar{b}_2) u_{\frac{T}{2}}^{\bar{b}_2} \right\vert \leq  \zeta_3, \quad \text{and} \quad 
		\left\vert (b_2 + \bar{b}_2) w_s^{\bar{b}_2} \right\vert \leq \left\vert (b_2 + \bar{b}_2) w_{\frac{T}{2}}^{\bar{b}_2} \right\vert \leq c_w + \zeta_3.
	\end{IEEEeqnarray*}
	Thus, for any $\bar{b}_2 \geq \bar{b}_2^{*,inc}$ we have:
	\begin{IEEEeqnarray}{rCl}
		I^{\bar{b}_2}_1 &=& \frac{\mathbb{E}(\xi)^2}{2 (r + \bar{r}(1-\bar{s})^2) } \int_0^{\frac{T}{2}} \left( (b_2 + \bar{b}_2) u_t^{\bar{b}_2} - (b_2 + \bar{b}_2) w_t^{\bar{b}_2} \right)^2 e^{2(b_1 + \bar{b}_1) t} \cdot e^{ -\frac{2 (b_2 + \bar{b}_2)^{1/2}}{r + \bar{r}(1-\bar{s})^2}  \int_0^t (b_2 + \bar{b}_2)^{3/2} u_s^{\bar{b}_2} ds} dt \nonumber \\
		&\leq& \kappa_1  \frac{1}{\sqrt{b_2 + \bar{b}_2} } \left( 1 - e^{- \kappa_2 (b_2 + \bar{b}_2)^{\frac{1}{2}} }\right) \xrightarrow[\bar{b}_2 \to \infty]{} 0, \nonumber
		\label{eq:I1_b2_bar_infinity}
	\end{IEEEeqnarray} 
	with $\kappa_1: = \frac{\mathbb{E}(\xi)^2 [\zeta_3^2 + (c_w + \zeta_3)^2 ] e^{2(b_1 + \bar{b}_1) \frac{T}{2}} }{2 (c_u - \zeta_3)}$ and $ \kappa_2 := \frac{(c_u - \zeta_3)T}{r+ \bar{r}(1-\bar{s})^2}$ are independent of $\bar{b}_2$. Therefore, there exists a $\bar{b}_2^{*,I_1} \geq \bar{b}_2^{*,0}$ such that for $\bar{b}_2 \geq \bar{b}_2^{*,I_1}$, we have $I_1^{\bar{b}_1} \leq \epsilon$.
	
	Now, we consider the quantity $I^{\bar{b}_2}_2$. Since $u_t^{\bar{b}_2}$ is positive over $[0,T]$ and from inequalities (\ref{eq:ut_wt_uniform_upper_bound_b2_bar_infty}) and (\ref{eq:ut_lower_bound_b2_bar_infty}), we know that for all $\bar{b}_2 \geq \bar{b}_2^{*,upper} \geq \bar{b}_2^{*,u}$ and $t \in [T/2,T]$,
	\begin{equation*}
	\left\vert u_t^{\bar{b}_2} - w_t^{\bar{b}_2} \right\vert \leq 2M,\qquad \text{and} \qquad \int_0^t (b_2 + \bar{b}_2)^{\frac{3}{2}} u_s^{\bar{b}_2} ds \geq \frac{T}{2}m_u.
	\end{equation*}
	Thus, similar to inequality (\ref{eq:I2_b2_infty}), there exists a $\bar{b}_2^{*,I_2} \geq \bar{b}_2^{*,upper}$ such that for all $\bar{b}_2 \geq \bar{b}_2^{*,I_2}$:
	\begin{equation*}
	I^{\bar{b}_2}_2  \leq  \kappa_3 (b_2 + \bar{b}_2)^2 e^{ - \kappa_4 \sqrt{b_2 + \bar{b}_2} } \leq \epsilon,
	\end{equation*}
	where  $\kappa_3: =\frac{T \mathbb{E}(\xi)^2 e^{2 (b_1 + \bar{b}_1) T} M^2 }{r + \bar{r}(1-\bar{s})^2}$ and $\kappa_4 := \frac{T m_u}{r + \bar{r}(1-\bar{s})^2}$ are independent of $\bar{b}_2$.
	
	Hence, for all $\bar{b}_2 \geq \bar{b}_2^* := \max \{ \bar{b}_2^{*,I_1}, \bar{b}_2^{*,I_2}\}$ we have:
	\begin{equation*}
	\Delta SC^{\bar{b}_2} = I^{\bar{b}_2}_1 + I^{\bar{b}_2}_2 \leq 2\epsilon.
	\end{equation*} 
	Since the proof holds for arbitrary $\epsilon > 0$, we obtain:
	\begin{equation*}
	\lim_{\bar{b}_2 \to \infty} \Delta SC^{\bar{b}_2} = 0.
	\end{equation*}
	Moreover, recall that $\eta_t$ and $v_t$ are invariant with respect to $\bar{b}_2$ and $0<v_t<\infty$ for $t>0$. Clearly we also have $w^{\bar{b}_2}_0 \geq 0$ and $\lim_{\bar{b}_2 \to \infty}w^{\bar{b}_2}_0 = 0$. Thus, we obtain:
	$0< \lim_{\bar{b}_2 \to \infty} SC^{MKV,\bar{b}_2} <\infty,$
	and conclude that:
	$$ \lim_{\bar{b}_2 \to \infty} PoA^{\bar{b}_2} = 1.$$\\
	
	\textbf{Case 2: } When $\bar{b}_2 \to 0$, we have:
	\begin{equation*}
	\begin{array}{l}
		 \lambda^{\bar{b}_2} \to \lambda^{\bar{b}_2 \to 0} := \frac{r + \bar{r}(1- \bar{s})^2}{r + \bar{r}(1-\bar{s})}, \qquad 
		 B^{\bar{b}_2} \to \frac{b_2^2}{r + \bar{r}(1-\bar{s})^2}=:B^{\bar{b}_2 \to 0} > 0,\\
		 C^{u,\bar{b}_2} \to \lambda^{\bar{b}_2 \to 0}(q+\bar{q}(1-s))=:C^{u,\bar{b}_2 \to 0} >0,\qquad 
		 D^{u,\bar{b}_2} \to \lambda^{\bar{b}_2 \to 0}(q_T+\bar{q}_T(1-s_T))=:D^{u,\bar{b}_2 \to 0} > 0,
	\end{array}
	\end{equation*}
	and $A^u, (A^w, C^w, D^w), (A^\eta, B^\eta, C^\eta, D^\eta)$ are independent of $\bar{b}_2$.	
	Let $u^{\bar{b}_2 \to 0}:[0,T]\to \mathbb{R}$ be the solution to the limiting Riccati equation:
	\begin{equation}
	\left({u}^{{\bar{b}_2 \to 0}}_t\right)' - 2A^u u_t^{\bar{b}_2 \to 0} -B^{\bar{b}_2 \to 0} (u_t^{\bar{b}_2 \to 0})^2+ C^{u,\bar{b}_2 \to 0} = 0, \qquad u_T^{\bar{b}_2 \to 0} = D^{u,\bar{b}_2 \to 0},
	\label{eq:riccati_limiting_u}
	\end{equation}
	which we recall has an explicit solution. It is easy to show directly from the explicit solutions that for every time $t\in [0,T]$, $\lim_{\bar{b}_2 \to 0}u^{\bar{b}_2}_t=u^{\bar{b}_2 \to 0}_t$. Next, our goal is to bound $u^{\bar{b}_2}_t$ uniformly over $t \in [0,T]$ for small $\bar{b}_2$, following the methodology of the proof of Proposition \ref{prop:r_bar_r}. For any $\epsilon>0$, there exists a $\bar{b}^*_2>0$ such that $\max\{ B^{\bar{b}_2},C^{u,\bar{b}_2},D^{u,\bar{b}_2} \}<\max \{B^{\bar{b}_2 \to 0},C^{u,\bar{b}_2 \to 0},D^{u,\bar{b}_2 \to 0} \}+\epsilon=:\zeta_4$ for all $\bar{b}_2 \leq \bar{b}^*_2$. Thus, for all $\bar{b}_2\leq \bar{b}^*_2$ and for every $t \in [0,T]$:
	\begin{equation*}
		\left \vert u_t^{\bar{b}_2} \right\vert \leq \frac{\zeta_4 + 2 \zeta_4 \sqrt{(A^u)^2 + \zeta_4^2}}{-2A^u e^{-2T \sqrt{(A^u)^2 + \zeta_4^2}}}.
	\end{equation*}
	
	Similarly, for every time $t\in [0,T]$, $\lim_{\bar{b}_2 \to 0}w^{\bar{b}_2}_t=w^{\bar{b}_2 \to 0}_t$, and  $w^{\bar{b}_2}$ is uniformly bounded over $[0,T]$ and small $\bar{b}_2$. From equation (\ref{eq:x_bar_MFG_explicit_new_notation}), the assumption $\mathbb{E}(\xi)\neq 0$, and by the bounded convergence theorem, we have for every $t \in [0,T]$:
   	\begin{equation}
   	\lim_{\bar{b}_2 \to 0} \bar{x}_t^{MFG,\bar{b}_2} = \mathbb{E}(\xi) e^{\int_0^t (b_1 + \bar{b}_1 - B^{\bar{b}_2 \to 0}u^{\bar{b}_2 \to 0}_s) ds} =: \bar{x}^{MFG,\bar{b}_2 \to 0}_t\neq 0.
   	\label{eq:x_bar_neq_0}
   	\end{equation}
	Moreover, $\bar{x}^{MFG,\bar{b}_2}$ is uniformly bounded for all $\bar{b}_2 \leq \bar{b}^*_2$ and for all $t \in [0,T]$. From the non-negativity of $u_t$, we have:
	$$ \left\vert \bar{x}^{MFG,\bar{b}_2}_t \right\vert \leq \left\vert \mathbb{E}(\xi) \right\vert e^{ (b_1 + \bar{b}_1 ) T }, \quad \forall t \in [0,T], \ \forall \bar{b}_2 \leq \bar{b}^*_2. $$ 
   	
   	By the assumption $\frac{r + \bar{r}(1- \bar{s})^2}{r + \bar{r}(1-\bar{s})} \neq \frac{q_T+\bar{q}_T(1-s_T)^2}{q_T+\bar{q}_T(1-s_T)}$, we have $D^{u,\bar{b}_2 \to 0} \neq D^{w}$, and thus by continuity, $u^{\bar{b}_2 \to 0}_t\neq w^{\bar{b}_2 \to 0}_t$ on a set of positive Lebesgue measure. 
   	 Thus, by the bounded convergence theorem, we deduce:
	\begin{equation}
	\lim_{\bar{b}_2 \to 0} \Delta SC^{\bar{b}_2} = \frac{1}{2} B^{\bar{b}_2 \to 0} \int_0^T \left( (u_t^{\bar{b}_2 \to 0} - w_t^{\bar{b}_2 \to 0}\right)^2 \left (\bar{x}_t^{MFG,\bar{b}_2 \to 0} \right)^2 dt \  > 0.
	\label{eq:prop_6_bounded_conv}
	\end{equation}	
	
	Meanwhile, $\eta_t$ does not depend on $\bar{b}_2$, and therefore, the variance $v_t$ also does not depend on $\bar{b}_2$. Clearly $0<v_t<\infty$ for $t>0$ and $0 \leq w^{\bar{b}_2 \to 0}_0 < \infty$, and thus,
	$ 0<\lim_{\bar{b}_2 \to 0}SC^{MKV,\bar{b}_2} <\infty.$
	Hence, we deduce:
	$$ \lim_{\bar{b}_2 \to 0} PoA^{\bar{b}_2} > 1. $$
\end{proof}

\begin{remark}
	Consider Assumption \ref{assumption} and the case when $\bar{b}_2$ tends to zero. We have $0< SC^{MKV, \bar{b}_2 \to 0} < \infty$, and therefore, $\lim_{\bar{b}_2 \to 0} PoA^{\bar{b}_2 } =1$ if and only if $\lim_{\bar{b}_2 \to 0} \Delta SC^{\bar{b}_2} = 0$. Since we can pass the limit as in equation (\ref{eq:prop_6_bounded_conv}), we have an analogous result as Theorem \ref{thm:PoA_equivalent_1} but for the limiting coefficients $A^u$, $A^w$, $B^{\bar{b}_2 \to 0}$, $C^{u,\bar{b}_2 \to 0}$, $C^w$, $D^{u,\bar{b}_2 \to 0}$, and $D^w$. Therefore, the assumption in Proposition 4 Case 2, $\frac{r + \bar{r}(1- \bar{s})^2}{r + \bar{r}(1-\bar{s})} \neq \frac{q_T+\bar{q}_T(1-s_T)^2}{q_T+\bar{q}_T(1-s_T)}$, which is equivalent to $D^{u,\bar{b}_2 \to 0} \neq D^w$, is sufficient, but not necessary, in order to have $\lim_{b_2 \to 0}PoA^{b_2}>1$. 
\label{remark_6}
\end{remark}

\begin{proposition}\label{prop:b1_b1bar}
	Assuming Assumption \ref{assumption}, then:
	\begin{equation*}
	\lim_{b_1 \to \infty} PoA = 1 \qquad \text{and} \qquad\lim_{\bar{b}_1 \to \infty} PoA =\infty.
	\end{equation*}	
	Furthermore, if:
	\begin{equation*}
	\frac{b_2}{b_2+\bar{b}_2}\cdot \frac{r + \bar{r}(1- \bar{s})^2}{r + \bar{r}(1-\bar{s})}\cdot (q_T+\bar{q}_T(1-s_T)) \neq q_T+\bar{q}_T(1-s_T)^2,
	\end{equation*}
	then:
	\begin{equation*}
	    \lim_{b_1 \to 0} PoA > 1\qquad \text{and} \qquad \lim_{\bar{b}_1 \to 0} PoA > 1.
	\end{equation*}
\end{proposition}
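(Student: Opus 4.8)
The plan is to treat the four limits separately, always writing $PoA = 1 + \Delta SC/SC^{MKV}$ as in (\ref{eq:PoA_LQ}) and using the representations (\ref{eq:Delta_SC_LQ_new_notation})-(\ref{eq:v_t_new_notation}), the closed form (\ref{eq:riccati_ut_sol}) together with its derivative (\ref{eq:riccati_ut_sol_deriv}), and the bounded-convergence technique of Propositions~\ref{prop:r_bar_r} and~\ref{prop:b2_to_infty}. Recall that $\lambda$, $B$, $B^\eta$, $C^w$, $C^\eta$, $D^w$, $D^\eta$ — hence also $\eta_t$ and $v_t$ — do not depend on $b_1$ or $\bar{b}_1$, while $A^u = -(b_1+\bar{b}_1/2)$, $A^w = -(b_1+\bar{b}_1)$, $A^\eta = -b_1$; and note that the displayed inequality in the ``furthermore'' clause is exactly $D^u \neq D^w$, a condition untouched by $b_1,\bar{b}_1$. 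For the limits $b_1\to0$ and $\bar{b}_1\to0$ the argument is by continuity, exactly as in Case~2 of Propositions~\ref{prop:b2_to_0} and~\ref{prop:b2bar}: as the parameter $\downarrow0$ the $A^\bullet$ converge to finite limits and everything else is fixed, so by (\ref{eq:riccati_ut_sol}) the solutions $u_t,w_t,\eta_t$ stay uniformly bounded and converge to the solutions of the limiting Riccati equations, $v_t$ and $\bar{x}_t^{MFG}$ converge with $\bar{x}_t^{MFG}$ tending to a nowhere-vanishing limit since $\mathbb{E}(\xi)\neq0$, and bounded convergence in (\ref{eq:Delta_SC_LQ_new_notation})-(\ref{eq:SC_MKV_LQ_new_notation}) shows $PoA$ tends to its value at the boundary parameter. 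Assumption (\ref{eq:assumptions}) still holds there, so Theorem~\ref{thm:PoA_equivalent_1} applies and that value equals $1$ only if $D^u=D^w$; since we assume $D^u\neq D^w$ and $PoA\ge1$ always (Proposition~\ref{prop:diff_SC}), the boundary value, hence the limit, is $>1$.

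\textbf{The limit $b_1\to\infty$.} Here $A^u,A^w,A^\eta\to-\infty$, so $\delta_u^+,\delta_w^+,\delta_\eta^+\to+\infty$, $\delta_u^-,\delta_w^-,\delta_\eta^-\to0^-$, and (using $D^u,D^w,D^\eta>0$) $B(D^u)^2+2A^uD^u-C^u\to-\infty$, similarly for $w,\eta$; so by (\ref{eq:riccati_ut_sol_deriv}) the maps $t\mapsto u_t,w_t,\eta_t$ are decreasing for $b_1$ large, and (\ref{eq:riccati_ut_sol}) gives $Bu_0\sim\delta_u^+\sim2b_1$, $Bw_0\sim\delta_w^+\sim2b_1$. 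In particular $w_0\to\infty$, so by (\ref{eq:SC_MKV_LQ_new_notation}) $SC^{MKV}\ge\frac12 w_0(\mathbb{E}(\xi))^2\to\infty$, and it will suffice to show $\Delta SC\to0$. Fix $\epsilon\in(0,T/3)$. On $[0,\epsilon]$ a direct computation from (\ref{eq:riccati_ut_sol}) gives $u_\epsilon B/\delta_u^+\to1$, hence $Bu_s\ge\frac32 b_1$ for $b_1$ large and all $s\le\epsilon$, so $(\bar{x}_t^{MFG})^2\le(\mathbb{E}(\xi))^2e^{-b_1 t}$ there and $\int_0^\epsilon(\bar{x}_t^{MFG})^2\,dt=O(1/b_1)$; moreover (\ref{eq:riccati_ut_sol_deriv}) makes $\dot u_t,\dot w_t$ super-exponentially small on $[0,\epsilon]$, so $u_t-w_t\to u_0-w_0\to-\bar{b}_1/B$ uniformly there and $(u_t-w_t)^2$ is bounded, whence $\int_0^\epsilon(u_t-w_t)^2(\bar{x}_t^{MFG})^2\,dt=O(1/b_1)$. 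On $[\epsilon,T]$, since $\dot{\bar{x}}_t^{MFG}/\bar{x}_t^{MFG}=b_1+\bar{b}_1-Bu_t$ changes sign at most once as $u_t$ decreases through $(b_1+\bar{b}_1)/B$, the function $\bar{x}^{MFG}$ has a single interior valley, so $|\bar{x}_t^{MFG}|\le\max(|\bar{x}_\epsilon^{MFG}|,|\bar{x}_T^{MFG}|)$, each $\le|\mathbb{E}(\xi)|e^{-cb_1}$ for some $c>0$ (the choice $\epsilon<T/3$ suffices for the bound at $T$); since $(u_t-w_t)^2\le2(u_0^2+w_0^2)=O(b_1^2)$, this forces $\int_\epsilon^T(u_t-w_t)^2(\bar{x}_t^{MFG})^2\,dt\to0$. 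Combining, $\Delta SC\to0$ and $PoA\to1$.

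\textbf{The limit $\bar{b}_1\to\infty$.} Now $A^u,A^w\to-\infty$ but $A^\eta$, hence $\eta_t,v_t$ and the first two terms of (\ref{eq:SC_MKV_LQ_new_notation}), are fixed, so $u_t,w_t$ are decreasing for $\bar{b}_1$ large with $Bu_0\sim\delta_u^+\sim\bar{b}_1$, $Bw_0\sim\delta_w^+\sim2\bar{b}_1$. Two estimates drive the argument. First, $u_s<\delta_u^+/B$ for every $s$ and $\delta_u^+\le2b_1+\bar{b}_1+\sqrt{BC^u}$, so $b_1+\bar{b}_1-Bu_s\ge-b_1-\sqrt{BC^u}$ and by (\ref{eq:x_bar_MFG_explicit_new_notation}) $(\bar{x}_t^{MFG})^2\ge(\mathbb{E}(\xi))^2e^{-2(b_1+\sqrt{BC^u})T}=:c_0>0$ uniformly in $t\in[0,T]$ and $\bar{b}_1$. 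Second, because the transition layer of $u_t,w_t$ near $T$ has width $O(1/\bar{b}_1)$, one has $u_t/\bar{b}_1\to1/B$ and $w_t/\bar{b}_1\to2/B$ uniformly on $[0,T/2]$, so $|u_t-w_t|\ge\frac{1}{2B}\bar{b}_1$ there for $\bar{b}_1$ large. Then (\ref{eq:Delta_SC_LQ_new_notation}) gives $\Delta SC\ge\frac12 B\cdot\frac{T}{2}\cdot\frac{\bar{b}_1^2}{4B^2}c_0$, i.e. $\Delta SC$ grows at least quadratically in $\bar{b}_1$; whereas $w_0\le\delta_w^+/B=O(\bar{b}_1)$ and the remaining terms of (\ref{eq:SC_MKV_LQ_new_notation}) are fixed, so $SC^{MKV}=O(\bar{b}_1)$. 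Hence $PoA=1+\Delta SC/SC^{MKV}$ grows at least linearly in $\bar{b}_1$ and tends to $\infty$.

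\textbf{Expected main obstacle.} The pair $b_1,\bar{b}_1\downarrow0$ is routine (continuity feeding Theorem~\ref{thm:PoA_equivalent_1}), and $\bar{b}_1\to\infty$ needs only one-sided estimates ($\Delta SC$ quadratic against $SC^{MKV}$ linear). The delicate case will be $b_1\to\infty$: there $u_t$, $w_t$ and $w_0$ all blow up linearly, and the naive bound $(u_t-w_t)^2=O(b_1^2)$ only yields $\Delta SC=O(b_1^2)$, which is worse than $SC^{MKV}=O(b_1)$; one has to exploit that the weight $(\bar{x}_t^{MFG})^2$ concentrates at $t=0$ — bounding $(u_t-w_t)^2$ by a constant near $0$ against $\int_0^\epsilon(\bar{x}^{MFG})^2=O(1/b_1)$, and using the $e^{-cb_1}$ smallness of $(\bar{x}^{MFG})^2$ away from $0$ to absorb the $O(b_1^2)$ growth of $(u_t-w_t)^2$. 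Extracting these rates cleanly from (\ref{eq:riccati_ut_sol})-(\ref{eq:riccati_ut_sol_deriv}), in particular the unimodality of $t\mapsto\bar{x}_t^{MFG}$ and the super-exponential smallness of $\dot u_t$ near $t=0$, is the technical heart of the proof.
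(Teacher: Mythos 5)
Your proposal follows essentially the same route as the paper's proof: the limits $b_1,\bar b_1\downarrow 0$ by uniform bounds plus bounded convergence (the paper concludes directly that $u^{\to 0}_t\neq w^{\to 0}_t$ on a set of positive measure because $D^u\neq D^w$, rather than invoking Theorem \ref{thm:PoA_equivalent_1} at the boundary, but the two are equivalent); the case $\bar b_1\to\infty$ by a quadratic-in-$\bar b_1$ lower bound on $\Delta SC$ over $[0,T/2]$ against an $O(\bar b_1)$ bound on $SC^{MKV}$ (the paper divides both by $\bar b_1^2$); and the case $b_1\to\infty$ by splitting $[0,T]$ and exploiting $Bu_t/b_1,\,Bw_t/b_1\to 2$. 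In that last case your decomposition differs in detail from the paper's (which splits at $3T/4$, shows $|Bu_t/b_1-Bw_t/b_1|\le\epsilon$ there, and bounds the ratio $\Delta SC/SC^{MKV}$; you split at a small $\epsilon$ and show $\Delta SC\to 0$ outright), and both work.

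There is, however, one step in the $b_1\to\infty$ case that does not hold as written: the bound $|\bar x^{MFG}_T|\le|\mathbb{E}(\xi)|e^{-cb_1}$, for which you claim ``the choice $\epsilon<T/3$ suffices.'' From $Bu_s\ge\tfrac32 b_1$ on $[0,\epsilon]$ alone one only gets $\int_0^T(b_1+\bar b_1-Bu_s)\,ds\le(b_1+\bar b_1)T-\tfrac32 b_1\epsilon$, which for $\epsilon<2T/3$ is positive and of order $b_1$ — the wrong sign, so no decay at $t=T$ follows. What is needed is the lower bound $Bu_s\ge\tfrac32 b_1$ on $[0,t_0]$ for some fixed $t_0>2T/3$; this is true (the same pointwise limit $Bu_{t_0}/\delta_u^+\to 1$ holds at any fixed $t_0<T$, and $u$ is decreasing), so the gap is easily repaired, but as stated the estimate at $T$ is not established. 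A second, harmless, inaccuracy: $u_0-w_0\to-\bar b_1/B+(C^u/D^u-C^w/D^w)/B$ rather than $-\bar b_1/B$, since the ratio in (\ref{eq:riccati_ut_sol}) at $t=0$ contributes the extra constant $C^u/(BD^u)$; only the boundedness of $u_t-w_t$ on $[0,\epsilon]$ is used, so this does not affect the argument.
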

\begin{proof}
	\textbf{Case 1:} Consider $b_1 \to \infty$. Since $\lim_{b_1 \to \infty} A^{u,b_1} = \lim_{b_1 \to \infty} A^{w,b_1} = - \infty$ we have:
	$$\lim_{b_1 \to \infty}B (D^{u})^2 + 2 A^{u,b_1} D^{u} - C^{u} = \lim_{b_1 \to \infty}B (D^{w})^2 + 2 A^{w,b_1} D^{w} - C^{w}=-\infty.$$
	Denote $g_u(b_1,t): = B \frac{u_t^{b_1}}{b_1}$ and $g_w(b_1,t) := B \frac{w_t^{b_1}}{b_1}$. For all $t \in [0,T)$, we have the limits:
	\begin{equation}
		\lim_{b_1 \to \infty} g_u(b_1,t) =  \lim_{b_1 \to \infty} g_w(b_1,t) = 2.
	\label{eq:ut_wt_lim_b1_infty}
	\end{equation}
	From equation (\ref{eq:riccati_ut_sol_deriv}), and together with equation (\ref{eq:ut_wt_lim_b1_infty}), there exists a $b_1^{*,upper} >0$ such that for all $b_1 \geq b_1^{*,upper}$, the functions $t \mapsto u_t^{b_1}$ and $t \mapsto w_t^{b_1}$ are decreasing and such that:
	$$ \sup_{0 \leq t \leq T} \left\{ g_u(b_1, t), g_w(b_1,t)\right\} \leq \max \left\{ g_u(b_1, 0), g_w(b_1,0) \right\} \leq 3.$$
	
	Fix $\epsilon>0$. There exists a $b^{*,I_1}_1\geq b_1^{*,upper}$ such that for all $b_1 \geq b_1^{*,I_1} $, and for all $t \in [0, 3T/4]$,
	\begin{equation*}
	\begin{split}
	\vert g_u(b_1, t) - g_w(b_1,t) \vert  &=\max \{g_u(b_1, t)-g_w(b_1, t),g_w(b_1, t)-g_u(b_1, t) \}\\
	&\leq \max \{g_u(b_1, 0)-g_w(b_1, 3T/4),g_w(b_1, 0)-g_u(b_1, 3T/4) \} \\
	&\leq \epsilon,
	\end{split}
	\end{equation*}
	and
	$$g_w(b_1,0) \geq 2 - \frac{1}{3} = \frac{5}{3},\quad  g_u(b_1,t) \geq g_u(b_1, 3T/4) \geq \frac{5}{3}.$$
	We adapt the methodology in Proposition \ref{prop:b2_to_infty} and split the interval $[0,T]$ into two parts: $[0, 3T/4]$ and $[3T/4, T]$. For all $b_1 \geq b_1^{*,I_1}$, similar to inequality (\ref{eq:I_1_plus_I_2}), we have:
	\begin{equation*}
		\frac{\Delta SC^{b_1}}{SC^{MKV,b_1}} \leq \frac{I^{b_1}_1 + I^{b_1}_2 }{g_w(b_1,0)},
	\end{equation*}
	where
	\begin{IEEEeqnarray*}{rCl}
	I^{b_1}_1 &=& b_1 \int_0^{\frac{3}{4}T} \left( g_u(b_1,t) - g_w(b_1,t) \right)^2 e^{2\bar{b}_1 t} \cdot \exp \left( 2b_1 t - 2 b_1 \int_0^t g_u(b_1,s) ds \right) dt \nonumber \\
	&\leq & \epsilon^2 e^{2\bar{b}_1 \cdot \frac{3}{4} T } \cdot b_1 \int_0^{\frac{3}{4}T} \exp \left( 2b_1 t - 2 b_1 \cdot \frac{5}{3} t \right) dt  \nonumber \\
	&= & \epsilon^2 e^{\frac{3\bar{b}_1 T}{2} } \cdot \frac{3}{4}(1-e^{-b_1T})  \nonumber \\	
	& \leq & \kappa_1 \epsilon^2 ,
	\end{IEEEeqnarray*} 	
	in which $\kappa_1 := \frac{3}{4} e^{\frac{3\bar{b}_1 T}{2} } $, and
	\begin{IEEEeqnarray*}{rCl}
		 I^{b_1}_2 &=& b_1 \int_{\frac{3}{4}T}^T \left( g_u(b_1,t) - g_w(b_1,t) \right)^2 e^{2\bar{b}_1 t} \cdot \exp \left( 2b_1 t - 2 b_1 \int_0^t g_u(b_1,s) ds \right) dt \nonumber \\
		 &\leq & b_1 (3+3)^2 e^{2 \bar{b}_1 T} \cdot \int_{\frac{3}{4}T}^{T}  \exp \left( 2b_1 t - 2b_1 \int_0^{\frac{3}{4}T} g_u(b_1,s) ds \right) dt \nonumber \\
		 &\leq & 36 e^{2 \bar{b}_1 T} b_1 \cdot \frac{T}{4} \exp\left(2b_1 \cdot T - 2 b_1 \cdot  \frac{3}{4} T \cdot \frac{5}{3} \right) \nonumber \\
		 &=&  9 T e^{2 \bar{b}_1 T} b_1 e^{- \frac{1}{2} b_1 T} \xrightarrow[b_1 \to \infty]{}0.
	\end{IEEEeqnarray*}
	Therefore, there exists a $b_1^{*} \geq b_1^{*,I_1}$, so that for $b_1 \geq b_1^*$, we have $I^{b_1}_2 \leq \epsilon$. Hence, for all $b_1 \geq b_1^{*}$,
	$$ 	\frac{\Delta SC^{b_1}}{SC^{MKV,b_1}} \leq \frac{I^{b_1}_1 + I^{b_1}_2 }{g_w(b_1,0)} \leq \frac{3}{5} (\kappa_1 \epsilon^2 + \epsilon).$$
	Since the proof holds for arbitrary $\epsilon>0$, and $\kappa_1 = \frac{3}{4} e^{\frac{3\bar{b}_1 T}{2} }$ is independent of $b_1$ and $\epsilon$, we conclude that:
	$$ \lim_{b_1 \to \infty} PoA^{b_1} = 1.$$ \\
	
	\textbf{Case 2:} Now, consider $\bar{b}_1 \to \infty$.
	Since $A^{u,\bar{b}_1}<0$ and $\lim_{\bar{b}_1 \to \infty} |A^{u,\bar{b}_1}|=\infty$, we have the following limits:
	\begin{equation*}
	\delta^{+,\bar{b}_1}_u-\delta^{-,\bar{b}_1}_u=2 \sqrt{(A^{u,\bar{b}_1})^2+BC^u} \xrightarrow[\bar{b}_1 \to \infty]{}\infty, \quad -\delta^{-,\bar{b}_1}_u=A^{u,\bar{b}_1}+\sqrt{(A^{u,\bar{b}_1})^2+BC^u}\xrightarrow[\bar{b}_1 \to \infty]{}0,
	\end{equation*}
	\begin{equation*}
	\frac{\delta^{+,\bar{b}_1}_u}{\bar{b}_1}=\frac{b_1+\frac{\bar{b}_1}{2}}{\bar{b}_1} +\frac{\sqrt{\left(b_1+\frac{\bar{b}_1}{2}\right)^2+BC^u}}{\bar{b}_1}\xrightarrow[\bar{b}_1 \to \infty]{} \frac{1}{2}+\frac{1}{2}=1.
	\end{equation*}
	We also have for $t\in [0,T)$:
	$$\delta^{+,\bar{b}_1}_ue^{-(\delta^{+,\bar{b}_1}_u-\delta^{-,\bar{b}_1}_u)(T-t)}\leq(\delta^{+,\bar{b}_1}_u-\delta^{-,\bar{b}_1}_u)e^{-(\delta^{+,\bar{b}_1}_u-\delta^{-,\bar{b}_1}_u)(T-t)}
	\xrightarrow[\bar{b}_1 \to \infty]{}0,$$
	which implies:
	$
	\lim_{\bar{b}_1\to \infty}\delta^{+,\bar{b}_1}_ue^{-(\delta^{+,\bar{b}_1}_u-\delta^{-,\bar{b}_1}_u)(T-t)}=0.
	$
	Therefore, for $t \in [0,T)$:
	$\lim_{\bar{b}_1 \to \infty}\frac{u^{\bar{b}_1}_t}{\bar{b}_1}=\frac{1}{B}$. By the same argument, we have for $t \in [0,T)$: $\lim_{\bar{b}_1 \to \infty}\frac{w^{\bar{b}_1}_t}{\bar{b}_1}=\frac{2}{B}$.

	Since $\lim_{\bar{b}_1 \to \infty}B (D^{u})^2 + 2 A^{u,\bar{b}_1} D^{u} - C^{u}=-\infty$ and $\lim_{\bar{b}_1 \to \infty}B (D^{w})^2 + 2 A^{w,\bar{b}_1} D^{w} - C^{w}=-\infty$,   from equation (\ref{eq:riccati_ut_sol_deriv}) there exists a $\bar{b}_1^{*,lower}$ such that for $\bar{b}_1 \geq \bar{b}_1^{*,lower}$:
	$$\max \left\{ \left|\frac{u^{\bar{b}_1}_0}{\bar{b}_1}-\frac{1}{B} \right|,\left|\frac{w^{\bar{b}_1}_{T/2}}{\bar{b}_1}-\frac{2}{B} \right| \right\} \leq \frac{1}{4B},$$
	and such that the functions $t \mapsto u_t^{\bar{b}_1}$ and $ t \mapsto w_t^{\bar{b}_1}$ are decreasing. Thus, for all $t \in [0, T/2]$ we have:
	\begin{equation*}
	    0< \frac{u_t^{\bar{b}_1}}{\bar{b}_1} \leq \frac{u_0^{\bar{b}_1}}{\bar{b}_1} \leq \frac{1}{B} + \frac{1}{4B}  \leq \frac{2}{B} - \frac{1}{4 B} \leq \frac{w_{T/2}^{\bar{b}_1}}{ \bar{b}_1} \leq \frac{w_t^{\bar{b}_1}}{\bar{b}_1}. 
	\end{equation*}
	Thus, for all $\bar{b}_1 \geq \bar{b}_1^{*,lower}$ and all $t \in [0,T/2]$,
	\begin{equation}
	  \left\vert  \frac{w^{\bar{b}_1}_t}{\bar{b}_1}-\frac{u^{\bar{b}_1}_t}{\bar{b}_1} \right\vert \geq \frac{1}{2B}.
	\label{eq:lower_bound_u_w}
	\end{equation}
	
	Note that $\eta_t$, and therefore, $v_t$, are independent of $\bar{b}_1$. Thus,
	\begin{equation*}
	\frac{1}{\bar{b}_1^2}SC^{MKV,\bar{b}_1}=\frac{1}{2\bar{b}_1^2}\left[\int_0^T \left[q+\bar{q} + B^{\eta} \eta_t^2 \right] v_t dt + (q_T + \bar{q}_T) v_T\right] + \frac{w^{\bar{b}_1}_0}{2\bar{b}_1^2} (\mathbb{E}(\xi))^2 \xrightarrow[\bar{b}_1 \to \infty]{}0.	
	\end{equation*}

	Now, consider:
	\begin{equation}
	\frac{1}{\bar{b}_1^2}\Delta SC^{\bar{b}_1}\geq \frac{B}{2}(\mathbb{E}(\xi))^2\int_{0}^{\frac{T}{2}} \left(\frac{u_t^{\bar{b}_1}}{\bar{b}_1} - \frac{w_t^{\bar{b}_1}}{\bar{b}_1}\right)^2 e^{2\int_0^t (b_1+\bar{b}_1-B u_s^{\bar{b}_1})ds} dt.
	\label{eq:prop_7_case_4_numerator}
	\end{equation}
	Since $u^{\bar{b}_1}_t$ is decreasing for $\bar{b}_1 \geq \bar{b}_1^{*,lower}$, we have $\bar{b}_1-B u_t^{\bar{b}_1} \geq \bar{b}_1-B u_0^{\bar{b}_1}$. We have the following limits for $t\in[0,T)$:
	\begin{equation*}
	\lim_{\bar{b}_1 \to \infty}\bar{b}_1-\delta^{+,\bar{b}_1}_u=-2b_1, \quad \lim_{\bar{b}_1 \to \infty}-\delta^{-,\bar{b}_1}_u\bar{b}_1=BC^u,
	\end{equation*}
	\begin{equation*}
	\lim_{\bar{b}_1 \to \infty}\delta^{+,\bar{b}_1}_u\bar{b}_1e^{-(\delta^{+,\bar{b}_1}_u-\delta^{-,\bar{b}_1}_u)(T-t)}=0, \quad \lim_{\bar{b}_1 \to \infty}(\delta^{-,\bar{b}_1}_u-\bar{b}_1)e^{-(\delta^{+,\bar{b}_1}_u-\delta^{-,\bar{b}_1}_u)(T-t)}=0,
	\end{equation*}
	and thus,
	\begin{equation*}
	\begin{split}
	\bar{b}_1-Bu^{\bar{b}_1}_0&=\frac{-BC^u+BD^u(\bar{b}_1-\delta^{+,\bar{b}_1}_u)-\delta^{-,\bar{b}_1}_u\bar{b}_1+(\delta^{+,\bar{b}_1}_u\bar{b}_1+BC^u+BD^u(\delta^{-,\bar{b}_1}_u-\bar{b}_1))e^{-(\delta^{+,\bar{b}_1}_u-\delta^{-,\bar{b}_1}_u)T}}{BD^u-\delta^{-,\bar{b}_1}_u+(\delta^{+,\bar{b}_1}_u-BD^u) e^{-(\delta^{+,\bar{b}_1}_u-\delta^{-,\bar{b}_1}_u)T}} \\
	&\xrightarrow[\bar{b}_1 \to \infty]{} -2b_1 .
	\end{split}
	\end{equation*}
	Since $\lim_{\bar{b}_1 \to \infty}(\bar{b}_1-B u_0^{\bar{b}_1}) = -2b_1 < 0$, there exists a $\bar{b}_1^{*}\geq\bar{b}_1^{*,lower}$, such that for $\bar{b}_1 \geq \bar{b}_1^{*}$, $(\bar{b}_1-B u_0^{\bar{b}_1}) \geq -3b_1$. Returning to inequality (\ref{eq:prop_7_case_4_numerator}), and using inequality (\ref{eq:lower_bound_u_w}) we have for $\bar{b}_1 \geq \bar{b}^*_1$:
	\begin{equation*}
	\frac{1}{\bar{b}_1^2}\Delta SC^{\bar{b}_1}
	\geq \frac{B}{2}\mathbb{E}(\xi)^2 \cdot \frac{1}{4B^2} \cdot \int_{0}^{\frac{T}{2}} e^{2b_1 t + 2(\bar{b}_1 - B u_0^{\bar{b}_1}) t } dt \geq \frac{\mathbb{E}(\xi)^2}{8 B} \int_0^{\frac{T}{2}} e^{-4b_1 t} dt  > 0.
	\end{equation*}
	Therefore,
	$
	\lim_{\bar{b}_1 \to \infty}\frac{1}{\bar{b}_1^2}\Delta SC^{\bar{b}_1}>0,
	$
	and thus,
	\begin{equation*}
	\begin{split}
	\lim_{\bar{b}_1 \to \infty}\frac{\Delta SC^{\bar{b}_1}}{SC^{MKV,\bar{b}_1}}=\lim_{\bar{b}_1 \to \infty}\frac{\frac{1}{\bar{b}_1^2}\Delta SC^{\bar{b}_1}}{\frac{1}{\bar{b}_1^2}SC^{MKV,\bar{b}_1}}=\infty.
	\end{split}
	\end{equation*}
	We conclude $\lim_{\bar{b}_1 \to \infty} PoA^{\bar{b}_1}=\infty.$\\

	\textbf{Cases 3 and 4:} First, we consider $b_1 \to 0$. We have $A^{u,b_1} \to A^{u,b_1\to 0}$, $\delta^{+,b_1}_u \to \delta^{+,b_1\to 0}_u>0$, and $\delta^{-,b_1}_u \to \delta^{-,b_1\to 0}_u<0$, and similarly for $A^{w,b_1}$, $A^{\eta,b_1}$, $\delta^{\pm,b_1}_w$, and $\delta^{\pm,b_1}_\eta$. Clearly we have for all $t\in[0,T]$, $\lim_{b_1 \to 0}u^{b_1}_t=:u^{b_1\to 0}_t$, $\lim_{b_1 \to 0}w^{b_1}_t=:w^{b_1 \to 0}_t$, and $\lim_{b_1 \to 0}\eta^{b_1}_t=:\eta^{b_1 \to 0}_t$. Next, we show that the three sequences are uniformly bounded. Let $0<\epsilon<-\delta^{-,b_1\to 0}_u$. There exists a $b_1^*>0$ such that $\max \left \{\left| \delta^{+,b_1}_u-\delta^{+,b_1\to0}_u \right|,\left| \delta^{-,b_1}_u-\delta^{-,b_1\to0}_u \right| \right\}<\epsilon$ for all $b_1 \leq b_1^*$. Then for all $b_1 \leq b_1^*$ and $t\in[0,T]$:
	\begin{equation*}
	\left|u^{b_1}_t\right| \leq \frac{C^u+D^u\left(\delta^{+,b_1}_u-\delta^{-,b_1}_u\right) }{-\delta^{-,b_1}_u} \\
	\leq \frac{C^u+D^u\left(\delta^{+,b_1\to 0}_u-\delta^{-,b_1 \to 0}_u+2\epsilon\right) }{-\delta^{-,b_1\to 0}_u-\epsilon},
	\end{equation*}
	and similarly for $\left|w^{b_1}_t\right|$ and $\left|\eta^{b_1}_t\right|$. From the assumption $\frac{b_2}{b_2+\bar{b}_2}\cdot \frac{r + \bar{r}(1- \bar{s})^2}{r + \bar{r}(1-\bar{s})}\cdot q_T+\bar{q}_T(1-s_T) \neq q_T+\bar{q}_T(1-s_T)^2$, we have $D^{u} \neq D^{w}$ and thus by continuity, $u^{b_1\to 0}_t \neq w^{b_1\to 0}_t$ on a set of positive Lebesgue measure.
	
	From equation (\ref{eq:x_bar_MFG_explicit_new_notation}), the assumption $\mathbb{E}(\xi)\neq 0$, and by the bounded convergence theorem, we have for every $t \in [0,T]$:
	\begin{equation*}
	\lim_{b_1\to 0}\bar{x}_t^{MFG,b_1} = \mathbb{E}(\xi) e^{\int_0^t(\bar{b}_1-B u^{b_1\to 0}_s)ds} =: \bar{x}_t^{MFG,b_1\to 0}\neq 0.
	\end{equation*}
	Moreover, $\bar{x}_t^{MFG,b_1}$ is uniformly bounded over $b_1 \leq b_1^*$ and $t \in [0,T]$, i.e. 
	$$ \left\vert \bar{x}^{MFG,b_1}_t \right\vert \leq \left\vert \mathbb{E}(\xi) \right\vert e^{ \bar{b}_1 T }, \quad \forall t \in [0,T],\ b_1 \leq b_1^*. $$ 
	
	Therefore, by the bounded convergence theorem, $0<\lim_{b_1 \to 0} \Delta SC^{b_1}<\infty$ and $\lim_{b_1 \to 0}v^{b_1}_t=:v_t^{b_1 \to 0}$, which is bounded over $t\in [0,T]$. Thus, $0<\lim_{b_1 \to 0} SC^{MKV,b_1}<\infty$ and we conclude $\lim_{b_1 \to 0} PoA^{b_1} > 1$. The proof can be repeated to show $\lim_{\bar{b}_1 \to 0} PoA^{\bar{b}_1}  > 1$.
\end{proof}

\subsection{\textbf{Numerical Results}}
The price of anarchy for the class of linear quadratic extended mean field games that we consider is given by the ratio of the two quantities given by equations (\ref{eq:SC_MFG_1}) and (\ref{eq:SC_MKV_1}), which are explicit, up to evaluating integrals. Using the simple rectangle rule to estimate integrals, we numerically compute the price of anarchy when the coefficients are time-independent, non-negative, and satisfy Assumption \ref{assumption}. In particular, when we allow for full interaction (i.e. through the states and the controls), we choose the following default values:

\begin{equation*}
\begin{split}
    &\xi\equiv 1,\ T=1,\ b_1=1,\  \bar{b}_1=1,\ b_2=1,\ \bar{b}_2=1,\ \sigma=1, \\
    &q=1,\ \bar{q}=1,\ s=0.5,\ r=1,\ \bar{r}=1,\ \bar{s}=0.5,\ q_T=1,\  \bar{q}_T=1,\ s_T=0.5.
\end{split}
\end{equation*}

Unless otherwise stated, the parameters stay at these default values. For results involving only interaction through the states, we set $\bar{b}_2=0$ and $\bar{r}=0$. For results involving only interaction through the controls, we set $\bar{b}_1=0$, $\bar{q}=0$, and $\bar{q}_T=0$. Figures \ref{fig:r_rbar}-\ref{fig:b1bar} show the price of anarchy as we vary one parameter at a time for each of three interaction cases: full interaction (i.e. through the states and the controls), interaction only through the states, and interaction only through the controls. The results show various limiting behaviors, such as some of the cases proved in the previous section.

\begin{figure}[!htb]
	\centering
	\begin{subfigure}{.45\textwidth}
		\includegraphics[scale=0.5]{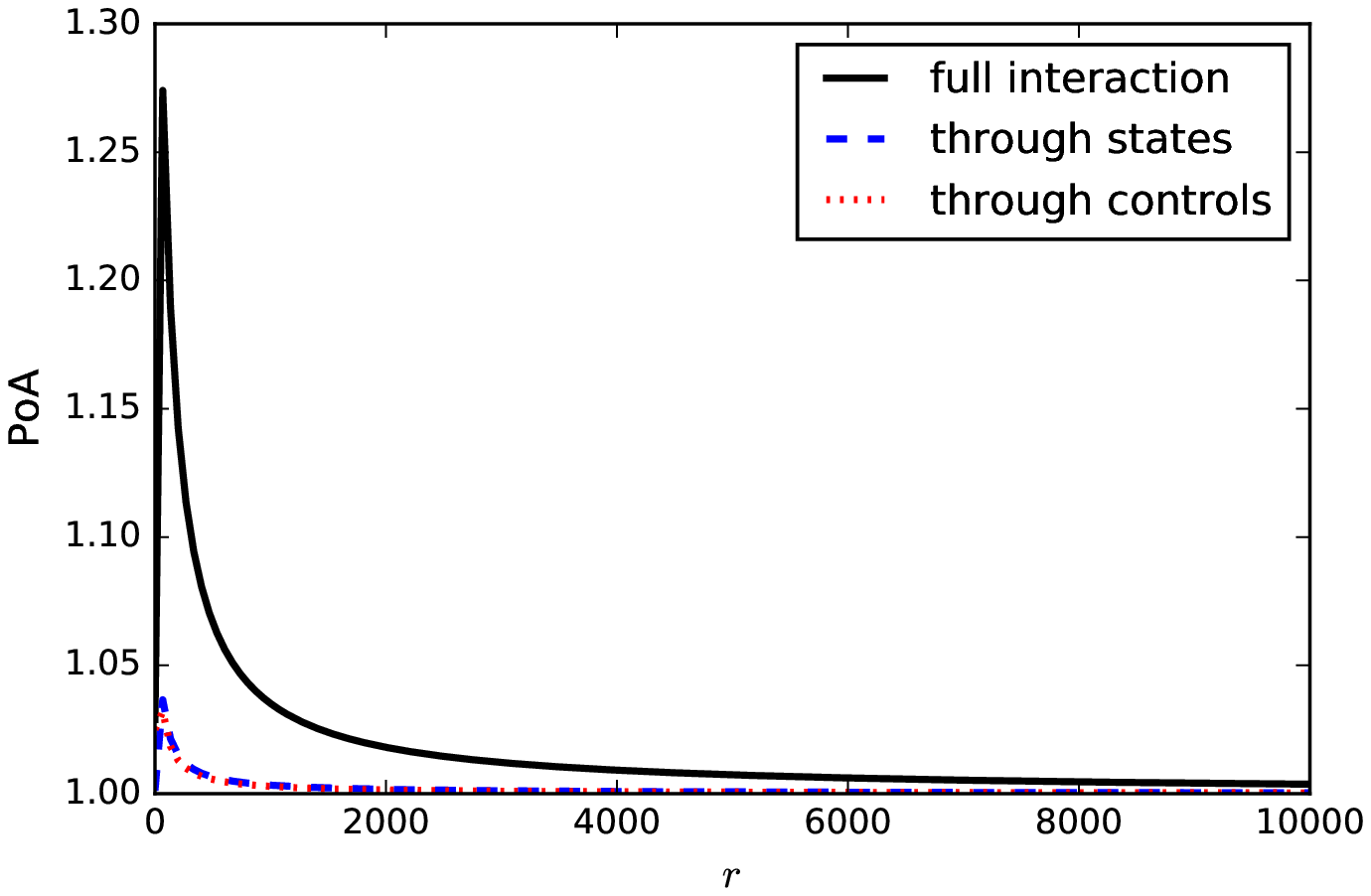}
	\end{subfigure}
	\begin{subfigure}{.45\textwidth}
		\includegraphics[scale=0.5]{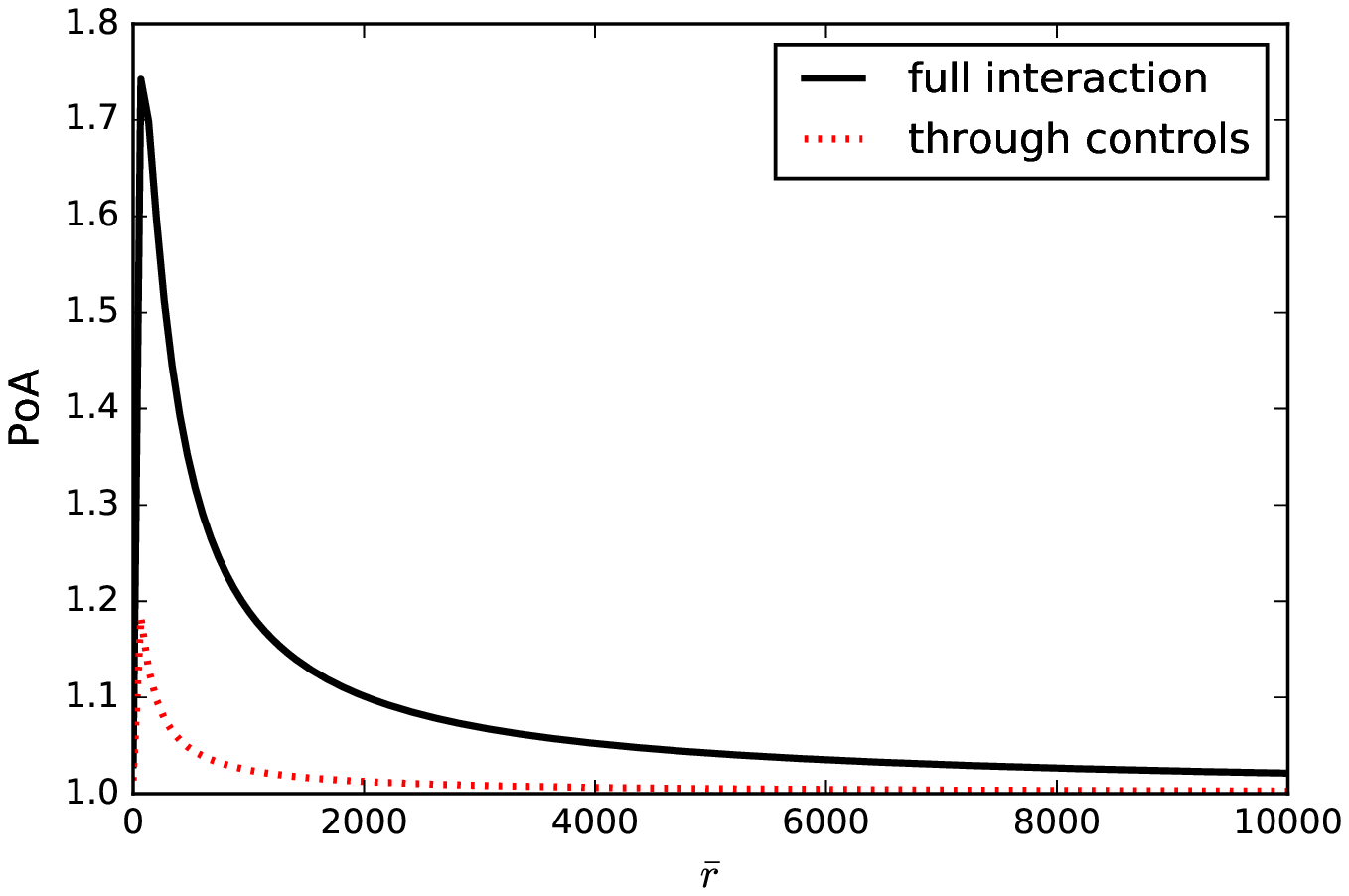}
	\end{subfigure}
	\caption{$PoA$ as we vary $r$ (left) and $\bar{r}$ (right).}
	\label{fig:r_rbar}
\end{figure}
\begin{figure}[!htb]
    \centering
    \begin{subfigure}{.45\textwidth}
        \includegraphics[scale=0.5]{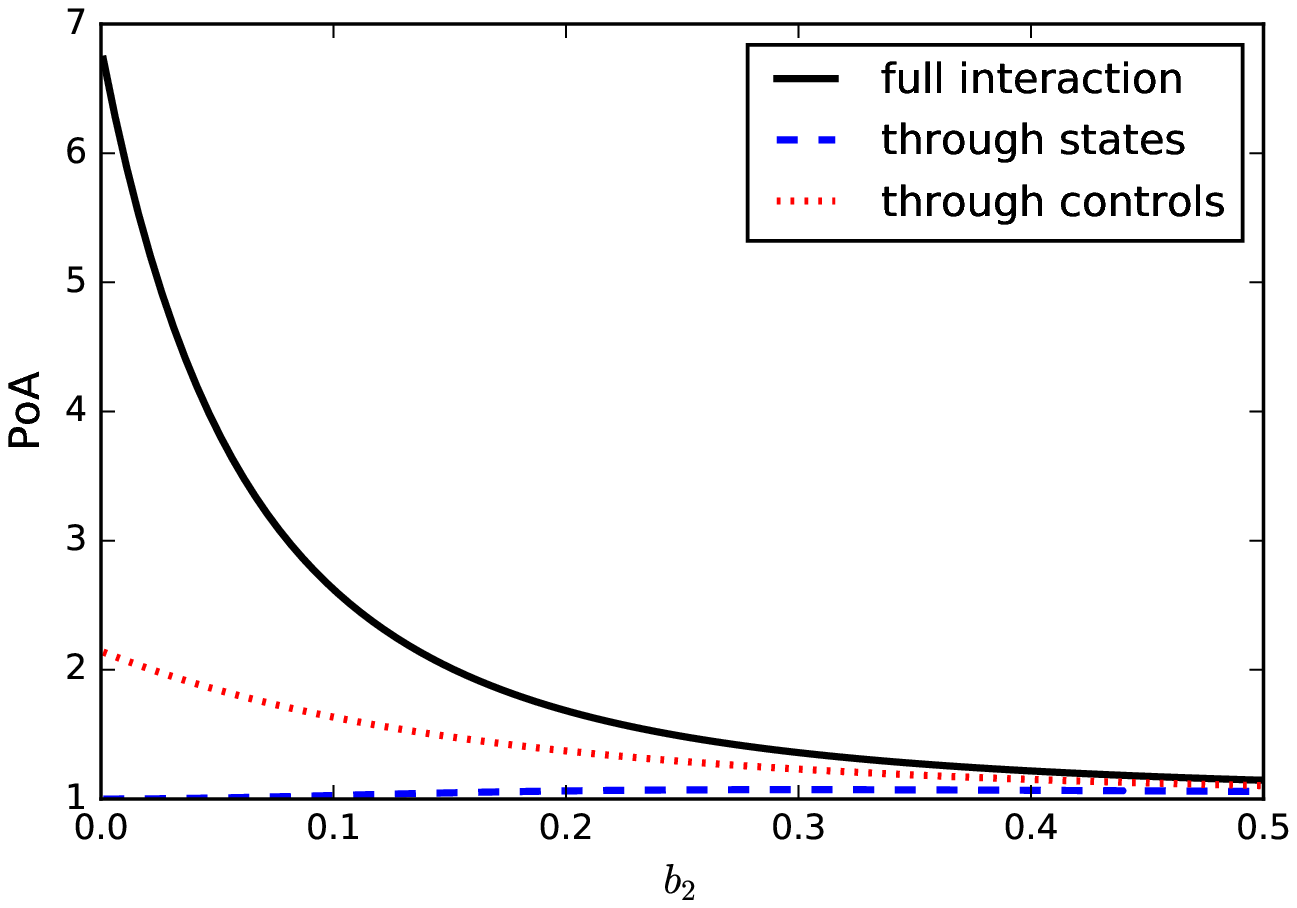}
    \end{subfigure}
    \begin{subfigure}{.45\textwidth}
        \includegraphics[scale=0.5]{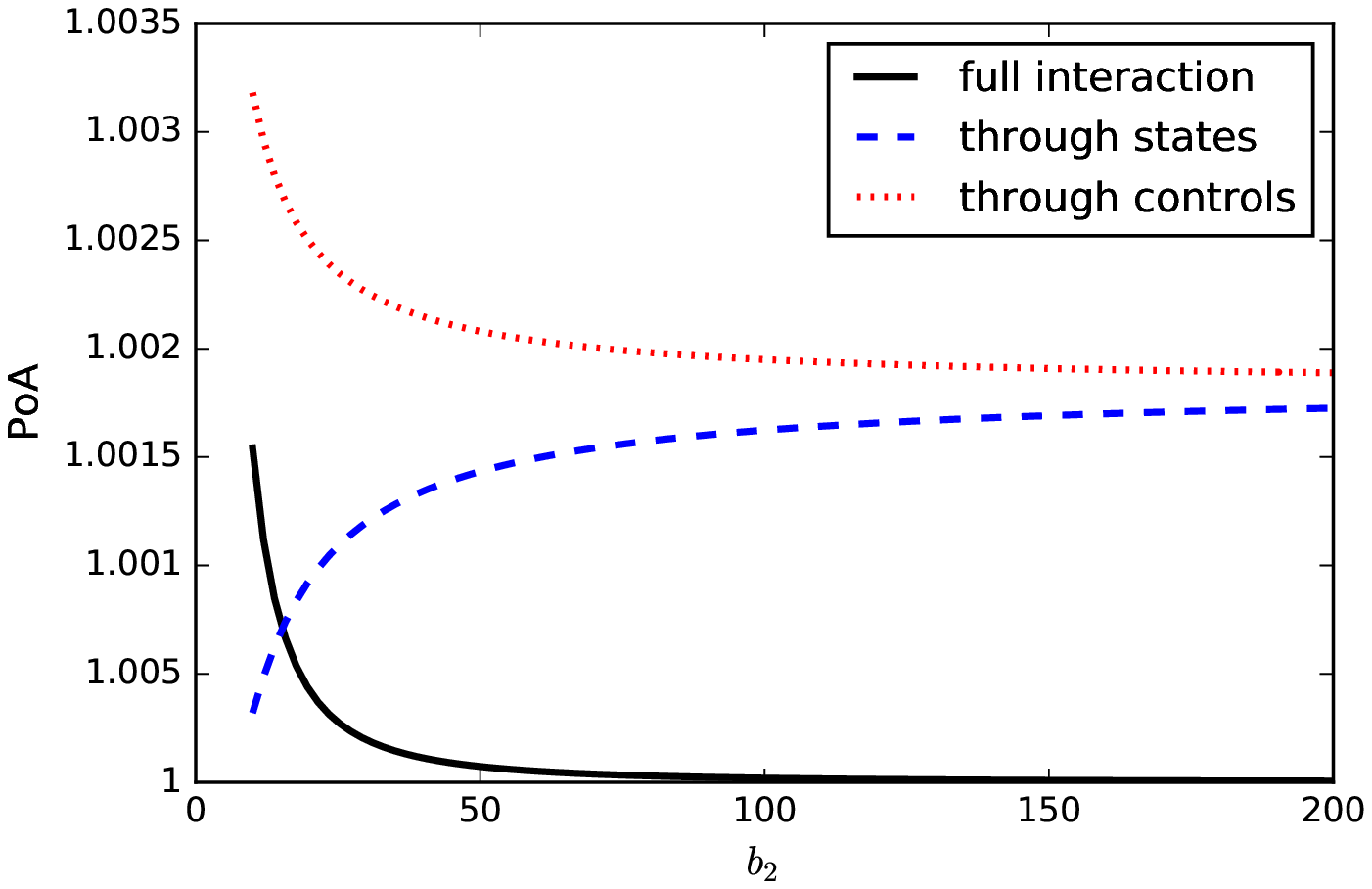}
    \end{subfigure}
    \caption{$PoA$ as we vary $b_2$.}
    \label{fig:b2}
\end{figure}
\begin{figure}[!htb]
	\centering
	\begin{subfigure}{.45\textwidth}
		\includegraphics[scale=0.5]{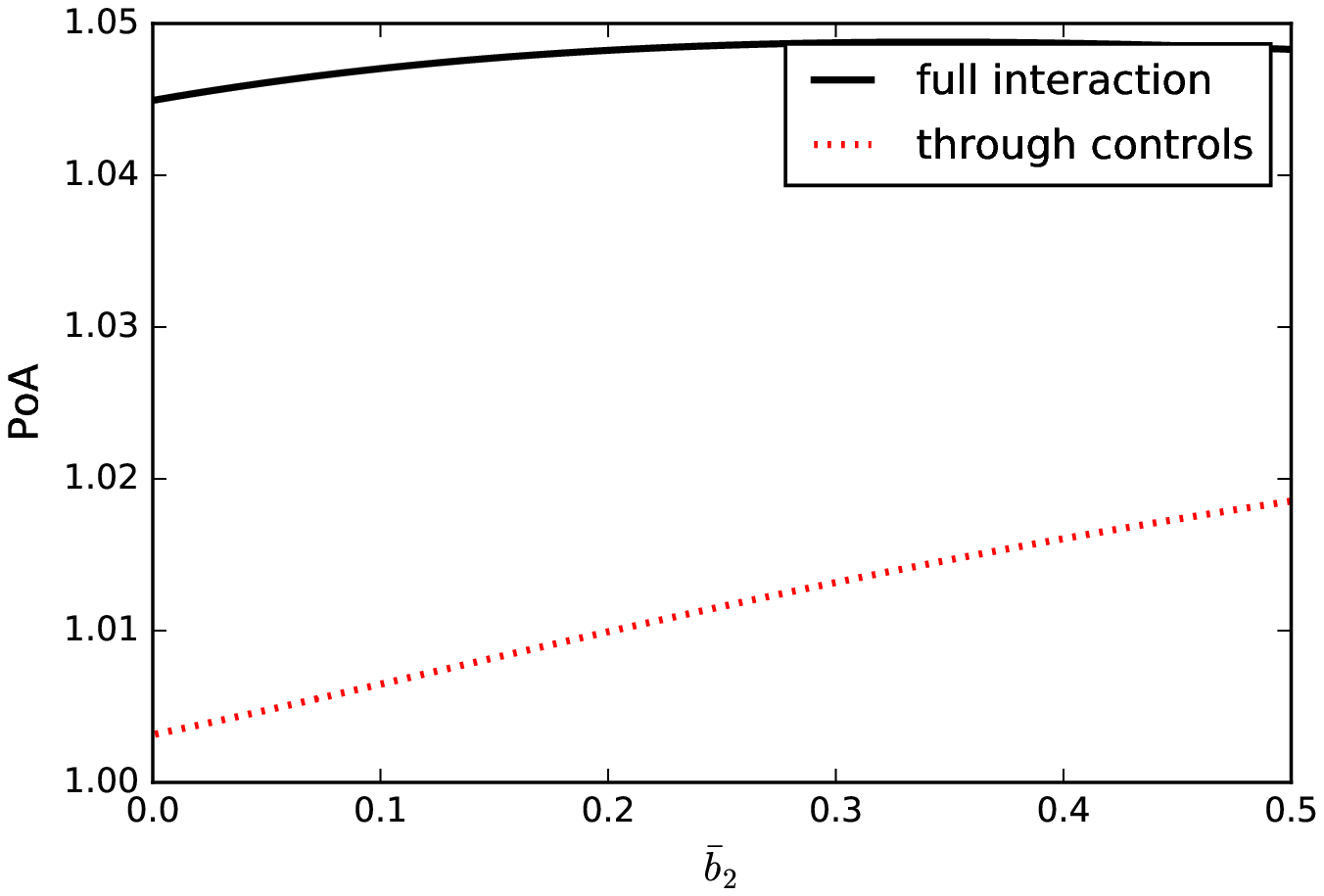}
	\end{subfigure}
	\begin{subfigure}{.45\textwidth}
		\includegraphics[scale=0.5]{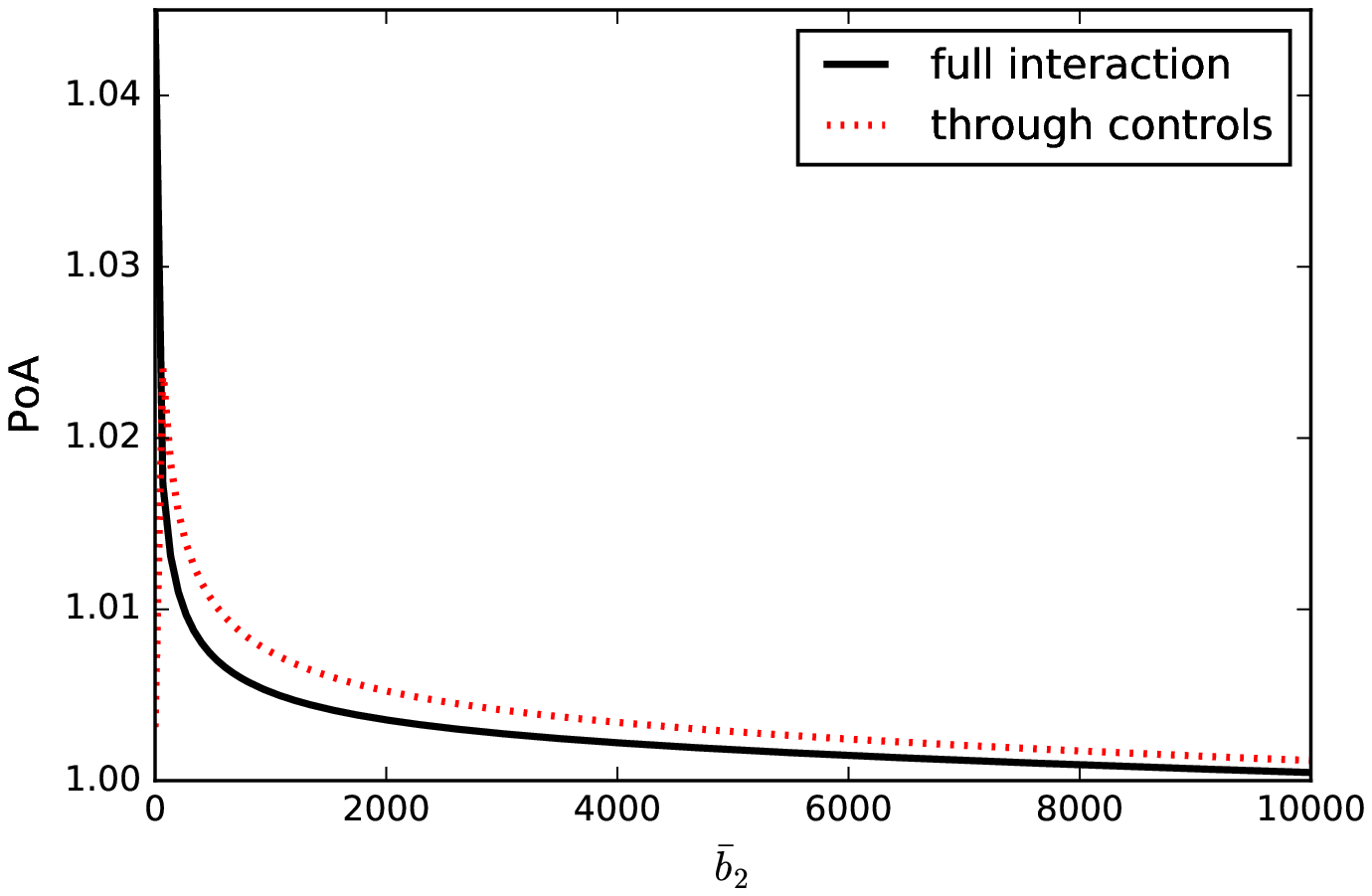}
	\end{subfigure}
	\caption{$PoA$ as we vary $\bar{b}_2$.}
	\label{fig:b2bar}
\end{figure}
\begin{figure}[!htb]
	\centering
	\begin{subfigure}{.45\textwidth}
		\includegraphics[scale=0.5]{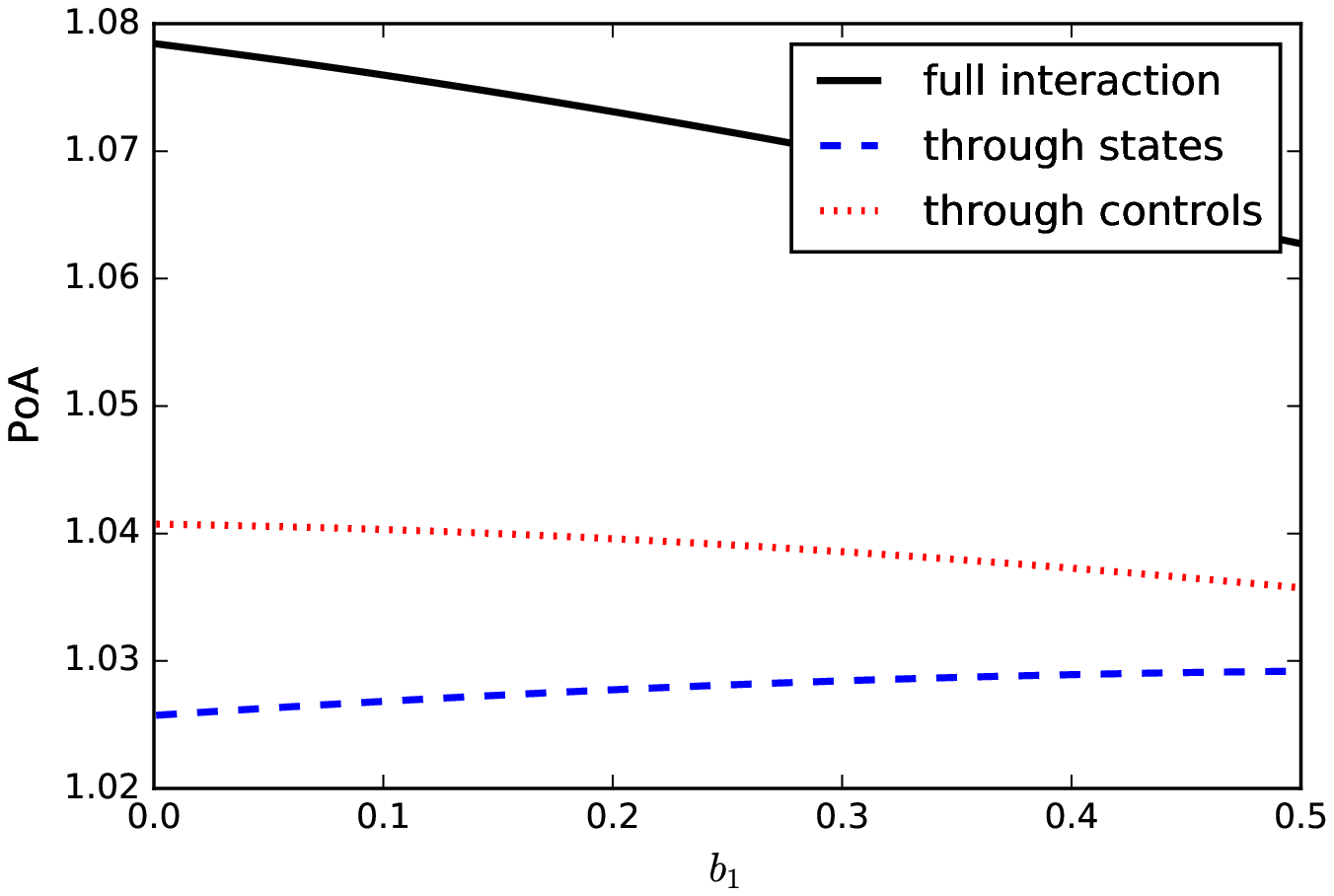}
	\end{subfigure}
	\begin{subfigure}{.45\textwidth}
		\includegraphics[scale=0.5]{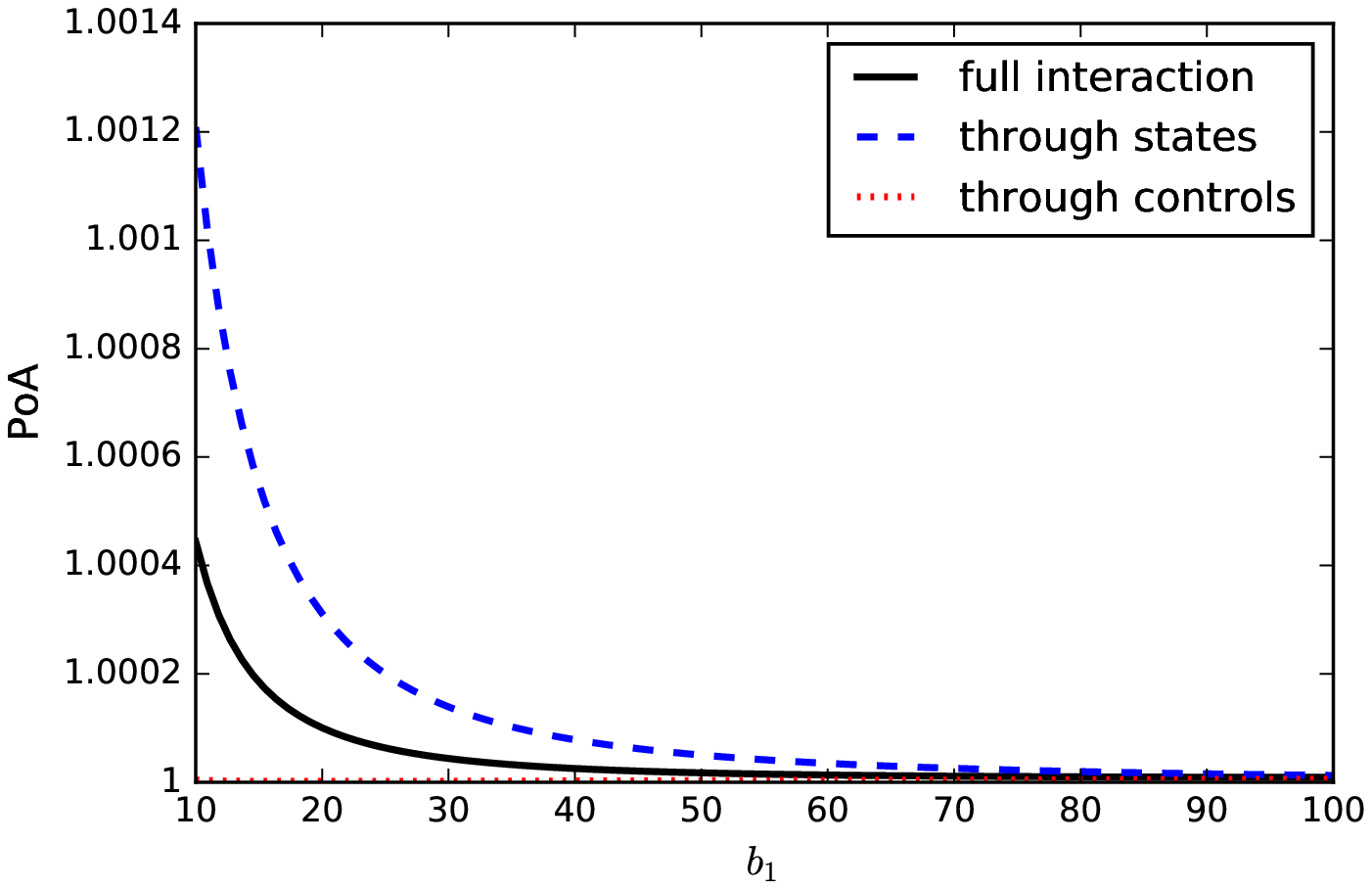}
	\end{subfigure}
	\caption{$PoA$ as we vary $b_1$.}
	\label{fig:b1}
\end{figure}
\begin{figure}[!htb]
	\centering
	\begin{subfigure}{.45\textwidth}
		\includegraphics[scale=0.5]{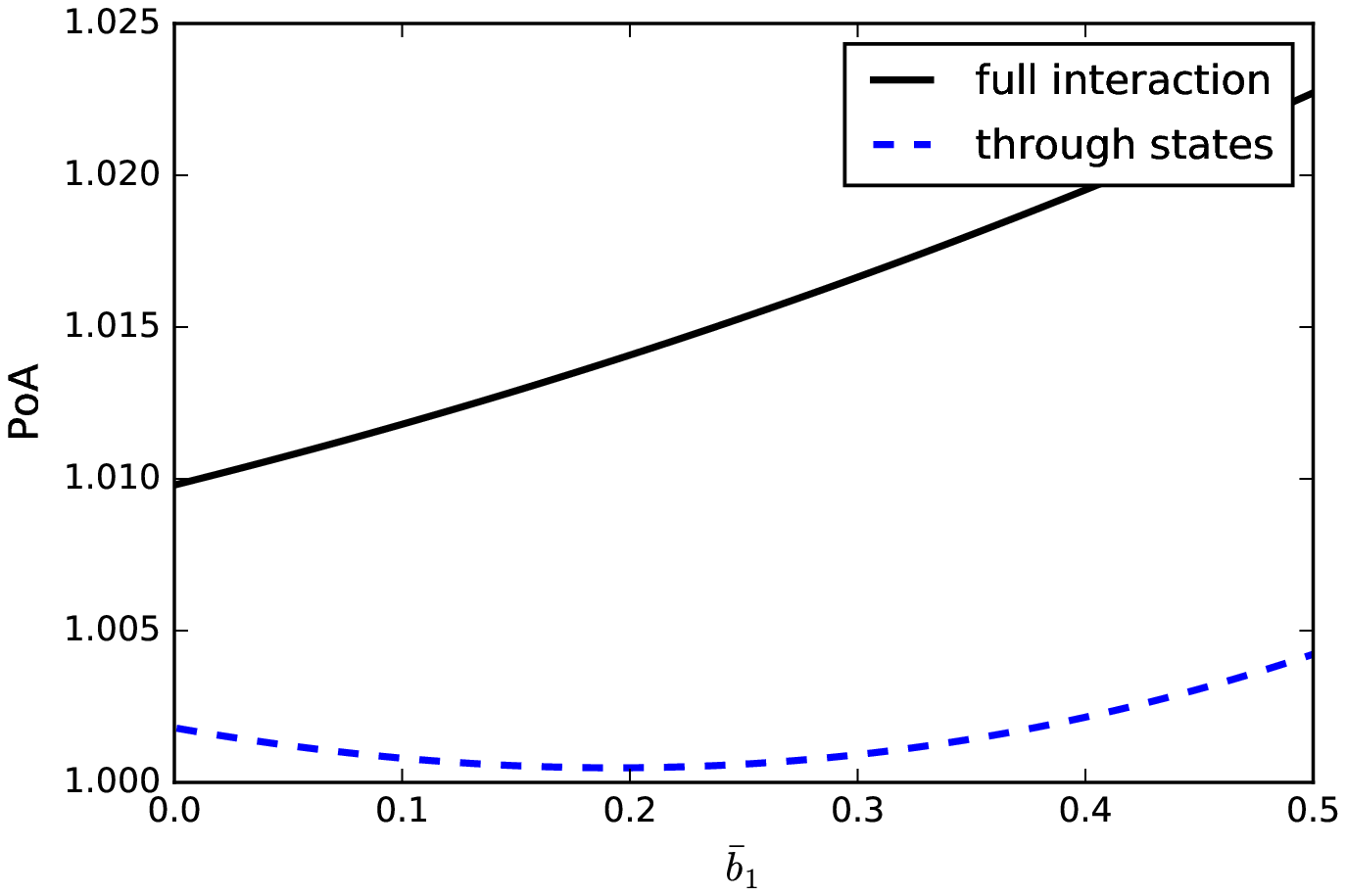}
	\end{subfigure}
	\begin{subfigure}{.45\textwidth}
		\includegraphics[scale=0.5]{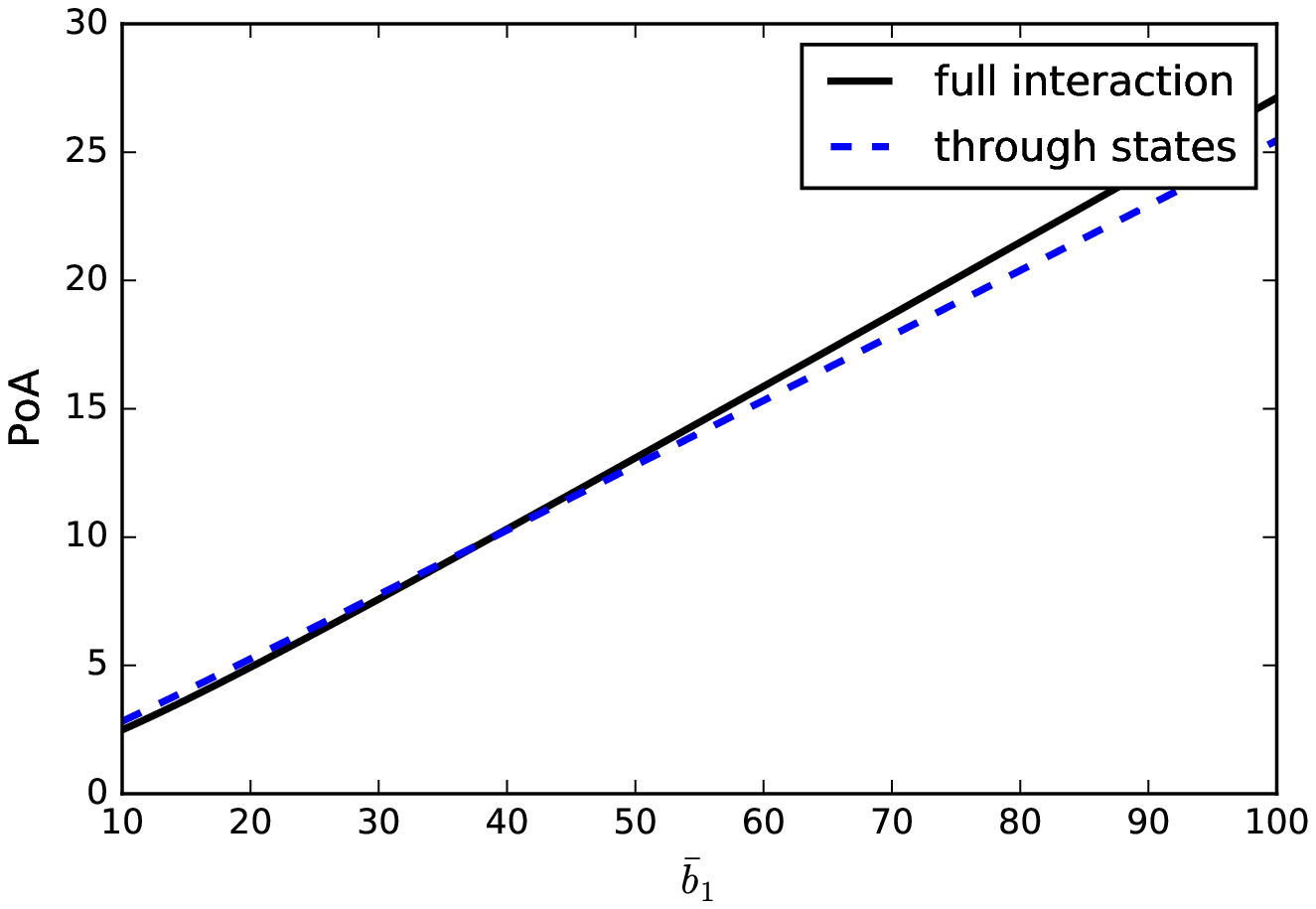}
	\end{subfigure}
	\caption{$PoA$ as we vary $\bar{b}_1$.}
	\label{fig:b1bar}
\end{figure}

Figure \ref{fig:r_rbar} confirms as in Proposition \ref{prop:r_bar_r} that $\lim_{r \to \infty}PoA= 1$ and $\lim_{\bar{r} \to \infty}PoA= 1$. Propositions \ref{prop:b2_to_infty} and \ref{prop:b2_to_0} are confirmed in Figure \ref{fig:b2}. In the case of full interaction, we have $\frac{q + \bar{q}(1-s)}{r + \bar{r}(1-\bar{s})}= \frac{q + \bar{q}(1-s)^2}{r + \bar{r}(1-\bar{s})^2}$ and we see that $\lim_{b_2 \to \infty}PoA= 1$. In the cases of interaction only through the states or interaction only through the controls, then $\frac{q + \bar{q}(1-s)}{r + \bar{r}(1-\bar{s})}\neq \frac{q + \bar{q}(1-s)^2}{r + \bar{r}(1-\bar{s})^2}$, and we see that $\lim_{b_2 \to \infty}PoA> 1$, confirming Proposition \ref{prop:b2_to_infty}. For Proposition \ref{prop:b2_to_0}, when there is only interaction through the states, then $\bar{b}_2=0$ and we see that $\lim_{b_2 \to 0}PoA= 1$. When there is full interaction or only interaction through the controls, then $\bar{b}_2>0$ and we see that $\lim_{b_2 \to 0}PoA>1$. For Proposition \ref{prop:b2bar}, Figure \ref{fig:b2bar} confirms that $\lim_{\bar{b}_2 \to 0}PoA>1$ and $\lim_{\bar{b}_2 \to \infty}PoA= 1$. Note that the condition $\frac{r + \bar{r}(1- \bar{s})^2}{r + \bar{r}(1-\bar{s})} \neq \frac{q_T+\bar{q}_T(1-s_T)^2}{q_T+\bar{q}_T(1-s_T)}$ is not satisfied for the full interaction case, and is therefore a sufficient, but not necessary, assumption. This agrees with the conclusion of Remark \ref{remark_6}. In Figures \ref{fig:b1} and \ref{fig:b1bar}, we note that Proposition \ref{prop:b1_b1bar} is confirmed. The condition $\frac{b_2}{b_2+\bar{b}_2}\cdot \frac{r + \bar{r}(1- \bar{s})^2}{r + \bar{r}(1-\bar{s})}\cdot (q_T+\bar{q}_T(1-s_T)) \neq q_T+\bar{q}_T(1-s_T)^2$ is satisfied for all three interaction cases and we see that $\lim_{b_1 \to 0}PoA >1$, $\lim_{\bar{b}_1 \to 0}PoA>1$, $\lim_{b_1 \to \infty}PoA = 1$, and $\lim_{\bar{b}_1 \to \infty}PoA = \infty$. Thus, the numerical computations confirm the results presented in Propositions \ref{prop:r_bar_r}-\ref{prop:b1_b1bar}.

\section{\textbf{Conclusion}} \label{sec:conclusion}
We defined the price of anarchy ($PoA$) in the context of extended mean field games as the ratio of the worst case social cost when the players are in a mean field game equilibrium to the social cost as computed by a central planner. Since the central planner does not require that the players be in a mean field game equilibrium, the central planner will realize a social cost that is no worse than that of a mean field game equilibrium. Thus, $PoA \geq 1$.

We computed the price of anarchy for linear quadratic extended mean field games, for which explicit computations are possible. We identify a large class of models for which $PoA=1$ (see Proposition \ref{prop: lq_subtract_FBSDEs} and Corollaries \ref{corollary_1} and \ref{corollary_2}), as well as giving a sufficient and necessary condition to have $PoA=1$ (see Theorem \ref{thm:PoA_equivalent_1} and Corollary \ref{corollary_3}). We also derive some limiting cases where $PoA \rightarrow 1$ as certain parameters tend to zero or to infinity (see Propositions \ref{prop:r_bar_r}-\ref{prop:b1_b1bar}). The numerics support our theoretical results.

\section{\textbf{Acknowledgments}}
The authors would like to thank the organizers of CEMRACS 2017 for putting together such a successful summer school. We would also like to thank the referees of the CEMRACS 2017 proceedings for their feedback.

\begin{appendices}

\section{\textbf{Solving Linear FBSDEs of McKean-Vlasov Type}} \label{appendix_solving_fbsdes}

Consider a linear FBSDE system of McKean-Vlasov type:
\begin{equation}
\begin{split}
        dX_t&=\left(a^x_tX_t+a^{\bar{x}}_t \mathbb{E}X_t+a^y_tY_t+a^{\bar{y}}_t \mathbb{E}Y_t\right)dt+\sigma dW_t, \quad X_0=\xi, \\
        dY_t&=\left(b^x_tX_t+b^{\bar{x}}_t \mathbb{E}X_t+b^y_tY_t+b^{\bar{y}}_t \mathbb{E}Y_t\right)dt +Z_t dW_t, \quad Y_T=c^xX_T+c^{\bar{x}}\mathbb{E}X_T.
\end{split}
\label{eq:FBSDE_general}
\end{equation}
For the LQEMFG model considered in section \ref{sec:EMFG}, the FBSDE system in equation (\ref{eq:FBSDE_EMFG}) is of the form of equation (\ref{eq:FBSDE_general}) if we set:
\begin{equation*}
\begin{split}
    &a^x_t=b_1(t),\ a^{\bar{x}}_t=\bar{b}_1(t),\ a^y_t=a^{MFG}(t)b_2(t),\ a^{\bar{y}}_t=b^{MFG}(t)b_2(t)+c^{MFG}(t)\bar{b}_2(t) \\
    &b^x_t=-(q(t)+\bar{q}(t)),\ b^{\bar{x}}_t=\bar{q}(t)s(t),\ b^y_t=-b_1(t),\ b^{\bar{y}}_t=0 \\
    &c^x_t=q_T+\bar{q}_T,\ c^{\bar{x}}_t=-\bar{q}_Ts_T.
\end{split}
\end{equation*}
For the LQEMKV model considered in section \ref{sec:EMKV}, the FBSDE system in equation (\ref{eq:FBSDE_EMKV}) is of the form of equation (\ref{eq:FBSDE_general}) if we set:
\begin{equation*}
\begin{split}
    &a^x_t=b_1(t),\ a^{\bar{x}}_t=\bar{b}_1(t),\ a^y_t=a^{MKV}(t)b_2(t),\ a^{\bar{y}}_t=b^{MKV}(t)b_2(t)+c^{MKV}(t)\bar{b}_2(t) \\
    &b^x_t=-(q(t)+\bar{q}(t)),\ b^{\bar{x}}_t=-s(t)\bar{q}(t)(s(t)-2),\ b^y_t=-b_1(t),\ b^{\bar{y}}_t=-\bar{b}_1 \\
    &c^x_t=q_T+\bar{q}_T,\ c^{\bar{x}}_t=s_T\bar{q}_T(s_T-2).
\end{split}
\end{equation*}

Now we return to the general FBSDE system (\ref{eq:FBSDE_general}). By taking expectations in equation (\ref{eq:FBSDE_general}), and letting $\bar{x}_t$ and $\bar{y}_t$ denote $\mathbb{E}X_t$ and $\mathbb{E}Y_t$, respectively, we get:
\begin{equation}
\begin{split}
        \dot{\bar{x}}_t&=(a^x_t+a^{\bar{x}}_t)\bar{x}_t+(a^y_t+a^{\bar{y}}_t)\bar{y}_t, \quad \bar{x}_0=\mathbb{E}(\xi), \\
        \dot{\bar{y}}_t&=(b^x_t+b^{\bar{x}}_t)\bar{x}_t+(b^y_t+b^{\bar{y}}_t)\bar{y}_t, \quad \bar{y}_T=(c^x+c^{\bar{x}})\bar{x}_T,
\end{split}
\label{eq:FBSDE_general_means}
\end{equation}
where the dot is the standard ODE notation for a derivative. We then make the ansatz $\bar{y}_t=\bar{\eta}_t\bar{x}_t+\bar{\chi}_t$ for deterministic functions $[0,T] \ni t \mapsto \bar{\eta}_t \in \mathbb{R}$ and $[0,T] \ni t \mapsto \bar{\chi}_t \in \mathbb{R}$. By plugging in the ansatz, the system in equation (\ref{eq:FBSDE_general_means}) is equivalent to the ODE system:
\begin{equation*}
\begin{split}
    \dot{\bar{\eta}}_t+(a^y_t+a^{\bar{y}}_t) \bar{\eta}_t^2+(a^x_t+a^{\bar{x}}_t-b^y_t-b^{\bar{y}}_t) \bar{\eta}_t -b^x_t-b^{\bar{x}}_t&=0, \quad \bar{\eta}_T=c^x+c^{\bar{x}}, \\
    \dot{\bar{\chi}}_t+(\bar{\eta}_t(a^y_t+a^{\bar{y}}_t)-b^y_t-b^{\bar{y}}_t)\bar{\chi}_t &=0, \quad \bar{\chi}_T=0.
\end{split}
\end{equation*}
The first equation is a Riccati equation. Note that $\bar{\chi}_t$ solves a first order homogeneous linear equation. Thus $\bar{\chi}_t=0$, $\forall t\in[0,T]$. Once the equation for $\bar{\eta}_t$ is solved, we can compute $\bar{x}_t$ by solving the linear ODE:
\begin{equation*}
\begin{split}
    \dot{\bar{x}}_t&=(a^x_t+a^{\bar{x}}_t+(a^y_t+a^{\bar{y}}_t)\bar{\eta}_t)\bar{x}_t, \quad \bar{x}_0=\mathbb{E}(\xi),
\end{split}
\end{equation*}
and thus,
\begin{equation*}
    \bar{x}_t=\mathbb{E}(\xi) e^{\int_0^t(a^x_u+a^{\bar{x}}_u+(a^y_u+a^{\bar{y}}_u)\bar{\eta}_u)du}.
\end{equation*}

Once we have computed $(\bar{x}_t)_{0\leq t\leq T}$, we can rewrite the original FBSDE system:
\begin{equation*}
\begin{split}
        dX_t&=\left(a^x_tX_t+a^y_tY_t+a^0_t\right)dt+\sigma dW_t, \quad X_0=\xi, \\
        dY_t&=\left(b^x_tX_t+b^y_tY_t+b^0_t\right)dt +Z_t dW_t, \quad Y_T=c^xX_T+c^0,
\end{split}
\end{equation*}
with:
\begin{equation*}
\begin{split}
    a^0_t=(a^{\bar{x}}_t+a^{\bar{y}}_t \bar{\eta}_t)\bar{x}_t, \quad b^0_t=(b^{\bar{x}}_t+b^{\bar{y}}_t \bar{\eta}_t)\bar{x}_t, \quad c^0=c^{\bar{x}}\bar{x}_T.
\end{split}
\end{equation*}
Now we make the ansatz: $Y_t=\eta_t X_t+\chi_t$, which reduces the problem to the ODE system:
\begin{equation*}
\begin{split}
    \dot{\eta}_t+a^y_t\eta_t^2+(a^x_t-b^y_t)\eta_t-b^x_t&=0, \quad \eta_T=c^x, \\
    \dot{\chi}_t+(-b^y_t+a^y_t\eta_t)\chi_t+a^0_t\eta_t-b^0_t&=0, \quad \chi_T=c^0, \\
    Z_t&=\sigma \eta_t.
\end{split}
\end{equation*}
Again, the first equation is a Riccati equation. Note that it is not necessary to solve for $\chi_t$ because of the relationship:
\begin{equation*}
    \bar{\eta}_t\bar{x}_t=\bar{y}_t=\mathbb{E}(Y_t)=\mathbb{E}(\eta_t X_t+\chi_t)=\eta_t \bar{x}_t+\chi_t.
\end{equation*}
Thus,
$\chi_t=(\bar{\eta}_t-\eta_t)\bar{x}_t.$

In summary, the solution to the linear FBSDE of McKean-Vlasov type is reduced to solving linear ODEs and Riccati equations. It will also be useful to compute $Var(X_t)$, which we denote by $v_t$. After we have solved the above equations, we have:
\begin{equation*}
\begin{split}
        dX_t&=\left((a^x_t+a^y_t \eta_t)X_t+a^y_t \chi_t+a^0_t\right)dt+\sigma dW_t, \quad X_0=\xi.
\end{split}
\end{equation*}
Thus,
\begin{equation*}
    v_t=Var(X_t)=Var(\xi)e^{\int_0^t 2(a^x_s+a^y_s\eta_s)ds}+\sigma^2 \int_0^t e^{2 \int_s^t (a^x_u+a^y_u\eta_u) du}ds.
\end{equation*}
In the case where the coefficients are time-independent, the Riccati equations for $\bar{\eta}_t$ and $\eta_t$ can be solved explicitly.

\subsection*{\textbf{Scalar Riccati Equation}} If the scalar Riccati equation: 
	\begin{equation*}
		\dot{\rho}_t - B \rho_t^2 - 2A \rho_t  + C = 0, 
	\end{equation*}
	with terminal condition $\rho_T = D$ satisfies:
	\begin{equation}
		B \neq 0,\ BD \geq 0,\ BC>0,
	\label{eq:Riccati_assumptions}
	\end{equation}
	then it has a unique solution:
\begin{equation}
		\rho_t= \frac{C(1-e^{-(\delta^+ - \delta^-)(T-t)})+D( \delta^+ -\delta^-e^{-(\delta^+ - \delta^-)(T-t)}) }{BD(1-e^{-(\delta^+ - \delta^-)(T-t)})+ \delta^+e^{-(\delta^+ - \delta^-)(T-t)}  -\delta^- },
	\label{eq:riccati_ut_sol_app}
\end{equation}
	with $\delta^\pm = -A \pm \sqrt{(A)^2 + B C}$.
	
	Furthermore, if $B \to 0$ and $A\neq 0$, we can deduce that the limiting solution of the scalar Riccati equation coincides with the linear first-order differential equation:
	\begin{equation*}
		\dot{\rho}_t - 2A \rho_t +C = 0,
	\end{equation*}
	with terminal condition $\rho_T = D$, namely:
	\begin{equation*}
		\rho_t = \left(D - \frac{C}{2A} \right) e^{-2A (T-t)} + \frac{C}{2A}.
	\end{equation*}
	
	If $B \to 0$ and $A = 0$, the limiting solution of the scalar Riccati equation coincides with the linear first-order differential equation:
	\begin{equation*}
	    \dot{\rho}_t +C = 0
	\end{equation*}
	with terminal condition $\rho_T = D$, namely:
	\begin{equation*}
	    \rho_t = D + C(T-t).
	\end{equation*}\\
Hence, returning to the linear FBSDE (\ref{eq:FBSDE_general}), for $\bar{\eta}_t$, we use:
\begin{equation*}
	\begin{array}{rl}
		A=-\frac{1}{2}(a^x+a^{\bar{x}}-b^y-b^{\bar{y}}), \quad, B=-(a^y+a^{\bar{y}}), \quad C=-(b^x+b^{\bar{x}}),\quad D=\ c^x+c^{\bar{x}}.
	\end{array}
\end{equation*}
The conditions (\ref{eq:Riccati_assumptions}) are satisfied if $-(a^y+a^{\bar{y}})>0$, $-(b^x+b^{\bar{x}})>0$, and $c^x+c^{\bar{x}} \geq 0$.

For $\eta_t$, we use:
\begin{equation*}
	\begin{array}{rl}
		A=-\frac{1}{2}(a^x-b^y), \quad B=-a^y, \quad C=-b^x, \quad D=\ c^x.
	\end{array}
\end{equation*}
The conditions (\ref{eq:Riccati_assumptions}) are satisfied if $-a^y>0$, $-b^x>0$, and $c^x \geq0$. Returning to the LGEMFG and LGEMKV problems, if we assume the coefficients are non-negative, we see that these conditions are exactly assumption (\ref{eq:assumptions}).

\end{appendices}

\end{document}